\providecommand{\MR}{\relax\ifhmode\unskip\space\fi MR }
\providecommand{\href}[2]{#2}
\tikzset{commutative diagrams/.cd,arrow style=tikz,diagrams={>=latex'}}
\mathchardef\-="2D 
\newcommand{\id}{\mathrm{id}}
\newcommand{\Z}{\mathbb{Z}} 
\newcommand{\R}{\mathbb{R}}
\newcommand{\F}{\mathcal{F}}
\newcommand{\Co}{\mathrm{Co}}
\newcommand{\srg}{\mathrm{Srg}}
\newcommand{\leftb}{\partial_{L}}
\newcommand{\rightb}{\partial_{R}}
\newcommand{\bottomb}{\partial_{B}}
\newcommand{\topb}{\partial_{T}}
\newcommand{\V}{\mathcal{V}}
\newcommand{\Hom}{\mathrm{Hom}}
\newcommand{\1}{\mathbb{1}}
\newcommand{\D}{\mathcal{D}}
\newcommand{\col}{\mathrm{col}}
\newcommand{\T}{\mathcal{T}}
\newcommand{\mir}{\mathrm{mir}}
\newcommand{\refl}{\mathrm{ref}}
\newcommand{\bob}{\partial_{-}}
\newcommand{\tob}{\partial_{+}}
\newcommand{\Int}{\mathrm{Int}}
\newcommand{\Vect}{\mathrm{Vect}}
\newcommand{\Bimod}{\mathrm{Bimod}}
\newcommand{\Cat}{\mathrm{Cat}}
\newcommand{\obj}{\mathrm{Obj}}
\newcommand{\Ob}{\mathrm{Ob}}
\newcommand{\catC}{\mathscr{C}}
\newcommand{\catD}{\mathscr{D}}
\newcommand{\nstand}[1]{S^{\sqcup {#1} }}
\newcommand{\stand}{S^{\sqcup n}}
\newcommand{\lempty}{{_* \emptyset}}
\newcommand{\rempty}{\emptyset_*}
\newcommand{\KV}{2\-\mathrm{Vect}}
\newcommand{\X}{\mathcal{X}}
\newcommand{\bfk}{\mathbf{k}}
\newcommand{\M}{\mathcal{M}}
\newcommand{\Rib}{\mathrm{Rib}_{\V}}
\newcommand{\End}{\mathrm{End}}
\newcommand{\Fill}{\mathrm{Fill}}
\newcommand{\cFill}{\mathrm{Fill}_{\mathrm{c}}}
\newcommand{\cdotv}{\cdot_{\text{v}}}
\newcommand{\circh}{\circ_{\text{h}}}
\numberwithin{equation}{section}
\numberwithin{figure}{section}
\theoremstyle{plain}
\newtheorem{thm}{Theorem}[section]
\theoremstyle{definition}
\newtheorem{Definition}[thm]{Definition}
\newtheorem{lemma}[thm]{Lemma}
\newtheorem{prop}[thm]{Proposition}
\begin{document}
\title[A TQFT extending the RT Theory]{A 2-categorical extension of the Reshetikhin--Turaev theory}
\author{Yu Tsumura}

\email{ytsumura@math.purdue.edu}
\address{Department of Mathematics, Purdue University, West Lafayette IN 47907, USA}

\begin{abstract}
We concretely construct a 2-categorically extended TQFT that extends the Reshetikhin-Turaev TQFT to cobordisms with corners.
The source category will be a well chosen 2-category of decorated cobordisms with corners and the target bicategory will be the Kapranov-Voevodsky 2-vector spaces.
\end{abstract}

\maketitle

\section{Introduction}
One of the great breakthroughs in the understanding of physical theories was the construction of Reshetikhin--Turaev
(2+1)-dimensional topological quantum field theories \cite{Turaev10, MR1091619}. 
Prior to this, Atiyah axiomatized a TQFT in \cite{MR1001453}.
The simpler (1+1)-dimensional theory was
nicely formulated by  \cite{Dijkgraafthesis}. 
The latter construction was lifted to a conformal field theory by Segal \cite{MR981378}.
In all these cases, one has a functor from a cobordism category to an algebraic category. 

Going back to Freed and Quinn \cite{MR1240583}, Cardy and Lewellen \cite{MR1107480} ,
there has been an interest in including boundary conditions/information. 
Besides the physical challenges, this poses a mathematical problem as both the geometric and the algebraic category need to be moved into higher categories.
Naively, a cobordism with corners is a 2-category, by viewing the corners as objects, the boundaries as cobordisms between them and the cobordism as a cobordism of cobordisms. 
The devil is of course in the details. There have been several
approaches to the problem such as Lurie's approach \cite{Lurie2009}, \cite{MR2648901}, \cite{MR2713992}
and the project \cite{DSS}.

Taking a step back to the (1+1)-dimensional situation, the TQFTs with boundaries have been nicely characterized and
given rise to new axioms such as the Cardy axiom, see e.g.\ \cite{MR2395583} for a nice introduction or \cite{MR2242677} for a model free approach. 
Here the objects are not quite cobordisms with corners, but more simply surfaces with boundaries
and points on the boundary.

In this paper we will give a constructive solution to the problem by
 augmenting the setup of the Reshetikhin--Turaev theory.
 We give very careful treatment and check all the details.
In order to do this we use an algebraic and a geometric 2-category and define a 2-functor (with anomaly) between them.
The source geometric 2-category $\Co$ is constructed extending the category of decorated cobordisms  of the RT TQFT. 
The target algebraic (weak) 2-category is the Kapranov-Voevodsky 2-vector spaces $\KV$ defined in \cite{KV1994}.
%

Great caution has to be used in the definition of gluing of decorated cobordisms with corners.
As objects and  1-morphisms, we fix the \textit{standard circles} and the \textit{standard surfaces}.
Thus, the 2-category $\Co$ on the level of objects and 1-morphisms is combinatoric.
The topological nature of $\Co$ lies in the 2-morphism level.
A 2-morphisms of $\Co$ is a decorated 3-manifold with corners.
One of the property of a decorated 3-manifold is that the boundary of the manifold is parametrized.
Namely, there are several embeddings from the standard surfaces to the boundary of the manifold.
This is where we need to be careful.
The gluing of standard surfaces is not a standard surface but homeomorphic to it.
Therefore to define horizontal composition of 2-morphisms, we need to choose and fix a homeomorphism between these spaces with caution.

In order to construct a 2-functor from $\Co$ to $\KV$, we "cap off" the corners of a cobordism to reduce it to a cobordism without corners.
Then we apply the original RT TQFT.
To prove the functoriality of this 2-functor, we develop a technique of representing cobordisms by special ribbon graphs and reduce calculations on manifolds to calculations on special ribbon graphs.

Our work is intended as a bridge between several subjects. The presentation of the categories was chosen to match with the  considerations of string topology \cite{MR2597733} in the formalism of \cite{MR2314100,MR2411420}, which will hopefully lead
to some new connections between the two subjects. 

Once the classification of \cite{DSPVB} is available, it will be interesting to see how our concrete realization of 3-2-1 TQFT fits.
Their results and those of \cite{DSS} are for a different target category.
We will provide a link in Section \ref{sec:comments}.
Further studies of 3-2-1 extensions but for Turaev--Viro are in  \cite{Turaev2010} and in \cite{BalsamI, BalsamII, BalsamIII}.
It would be interesting to relate these to our construction using the relationship between the RT and the TV invariants \cite{Turaev2010}.
 
The paper is organized as follows.
In Section \ref{sec:A 2-category of cobordisms with corners}, we will introduce the 2-category $\Co$.
The 2-category $\Co$ is our choice of a 2-category of decorated cobordisms with corners.
In Section \ref{sec:A 2-category of the Kapranov-Voevodsky 2-vector spaces}, we will recall the Kapranov-Voevodsky 2-vector spaces 2-$\Vect$ as the target 2-category of the extended TQFT.
Before we will start the discussion of an extension of the RT TQFT to the cobordisms with corners, we will review some of the original construction of the Reshetikhin-Turaev TQFT in Section \ref{sec:Review and Modification of the Reshetikhin-Turaev TQFT} since we will extensively use the original theory.
This will also serve as a quick reference of the notations and definitions of \cite{Turaev10}.
In Section \ref{sec:An extended TQFT}, we construct an extended TQFT $\X$ from $\Co$ to $\KV$ and all the details of several compatibility of gluings, which is the main part of the paper, will be proved in Section \ref{sec:Main Theorem}.
We also show that this is indeed an extension by showing when it is restricted to regular cobordisms it produce the RT TQFT.
In Appendix, we review B\'{e}nabou's definition of a bicategory and a pseudo 2-functor \cite{MR0220789} and extend it to the definition of \textit{projective pseudo 2-functor}.

\section*{Acknowledgments}
I would like to show my greatest appreciation to Professor Ralph M.\ Kaufmann whose advice, guidance, suggestions were of inevitable value for my study.
I also owe a very important debt to Professor Alexander A.\ Voronov and Professor Christopher Schommer-Pries for their interest in the current paper and valuable feedback.
I would like to thank the referee for carefully reading my manuscript and for giving such constructive comments which substantially helped improving the quality of the current paper.

\section{A 2-category of cobordisms with corners}\label{sec:A 2-category of cobordisms with corners}
In this section we define a 2-category of decorated cobordisms with corners $\Co$ which is the source 2-category of our extended TQFT as a projective pseudo 2-functor.
Let us explain the outline of our construction of a 2-category of decorated cobordisms with corners $\Co$.
We will give the precise definitions later.
The objects are standard circles.
The 1-morphisms are standard surfaces with boundaries.
The 2-morphisms are decorated 3-manifolds with corners with parametrized boundaries.
In the literature, there are many kind of definitions of a 2-category of cobordism with corners but as far as the author knows there has not been a definition of 2-category of decorated cobordisms with corners.
(Remark: We found that the definition given by Kerler and Lyubashenko \cite{MR1862634}  is close to our definition. 
They use a double category instead of a 2-category.)
One of the difference between our 2-category of cobordisms with corners from others is that 2-morphism cobordisms are parametrized.
This means that we fix standard surfaces and each cobordism is equipped with homeomorphisms from standard surfaces to its boundary components.
The difficulty with standard surfaces is that the composite of two standard surfaces is not a standard surface.
Thus we need to deal with compositions carefully.

Even though we choose to use standard circles and standard surfaces as our objects and 1-morphisms of $\Co$, the essence is combinatorial.
Namely, we only need the number of components of circles and \textit{decorated type} of a surface.
 Topological information lives in the level of 2-morphisms.
 Thus our definition of $\Co$ can be regarded as a geometric realization of combinatorial data on objects and 1-morphisms. See the table below.
 
 \begin{center}
    \begin{tabular}{ | l | l | l | }
    \hline
           $\Co$ & Geometric realization & Combinatorial data   \\ \hline
   Objects  & Standar circles & Integers
    \\ \hline
   1-morphisms & Standard surfaces & Decorated types   \\ \hline
    2-morphisms & Classes of decorated cobordisms with corners &    \\
    \hline
    \end{tabular}
\end{center}

In our setting, surfaces are restricted to connected ones. 
This restriction makes the theory simpler and fit rigorously into a 2-categorical setting.
Also, to avoid  non-connected surfaces, we introduce two formal objects $_*\emptyset$ and $\rempty$, which we call the left and the right empty sets.
As a just 2-category of cobordisms with corners, including non-connected surfaces is natural and simpler. 
However including general non-connected surfaces makes  it complicated to construct an extended TQFT.
In fact, our technique of representing cobordisms by special ribbon graphs does not generalize to the case of non-connected surfaces.
For this reason, the non-connected surfaces will be dealt with in a future paper.

Now we are going to explain the rigorous definitions.
Along with doing so, we need to modify and extend several definitions used in the Reshetikhin-Turaev TQFT.
We define data $\Co$ consisting of objects, 1-morphisms, and 2-morphisms.
It will be shown that the data $\Co$ is indeed a 2-category.
\subsection{Objects of $\Co$} 
Let us consider the 1-dimensional circle $S^1=\{(x, y) \in \R^2 \mid x^2+y^2=1\}$ in $\R^2$. 
We call the pair $(S^1, (0,-1))$ the \textit{ 1-dimensional standard circle}.
We often omit the point $(0, -1)$ in the notation and just write $S^1$.
For each natural number $n$, the ordered disjoint union of $n$  1-dimensional standard circles is called the \textit{ $n$-standard circles}. 
We denote $n$-standard circle by $\stand:=(S^1, i_1=(0,-1))\sqcup \cdots \sqcup (S^1, i_n=(0,-1))$, where $i_k$  are called the \textit{$k$-th base point} for $k=1, \dots, n$.
In general, a pair of a connected manifold and a point of the manifold is called a \textit{pointed} manifold and the specified point is called the \textit{base point}.
The disjoint union of several pointed manifolds is called a multi-pointed manifold.
Thus the $n$-standard circles is a multi-pointed manifold.
\begin{Definition}
We define an \textit{object} of the data $\Co$ to be the $n$-standard surface $\stand$ for each natural number $n$. We also include two formal symbols $\rempty$ and $\lempty$. We call them the left and the right empty set, respectively.
These two formal symbols are needed to confine ourselves to connected surfaces.
\end{Definition}

\subsection{1-Morphisms of $\Co$}
In the previous section we defined the objects of $\Co$.
Now we define a 1-morphism between two objects of $\Co$. 
A 1-morphism will be defined to be a standard surface, which we define below.
First, we define decorated types and decorated surface needed to define the standard surfaces.
%

\subsubsection{Decorated types and decorated surfaces}
Already Reshetikhin-Turaev's construction uses surfaces that are decorated by objects of a modular tensor category.
We extend their definition to include surfaces with boundaries.
Fix a modular  category $\V$.
Let
\begin{equation}\label{equ:type}
t=(m,n; a_1, a_2, \dots, a_p)
\end{equation}
be a tuple consisting of non-negative integers $m$, $n$, and for $i=1, \dots, p$, $a_i$ is either a non-negative integer or a pair $(W, \nu)$, where $W$ is an object of the modular  category $\V$ and $\nu$ is either $1$ or $-1$.
Such a pair is called a \textit{mark} or a \textit{signed object} of $\V$.
The tuple $t$ is called a  \textit{decorated type} or when confusion is unlikely we simply call it \textit{type}.
Let $L(t)=m$ and $R(t)=n$ denote the first and the second integer of the type $t$, respectively.

By an \textit{arc} on a surface $\Sigma$, we mean a simple oriented arc lying $\Sigma \setminus \partial \Sigma$.
An arc on $\Sigma$ endowed with an object $W$ of $\V$ and a sign $\nu=\pm 1$ is said to be \textit{marked}.
A connected compact orientable surface $\Sigma$ is said to be \textit{decorated} by a decorated type $t=(m,n; a_1, a_2, \dots, a_p)$ if the following conditions are satisfied.
\begin{enumerate}
\item There are $m+n$ boundary components of $\Sigma$ and the boundary components are totally ordered. 
The first $m$ components are called \textit{inboundary} or \textit{left boundary} and the last $n$ components are called \textit{outboundary} or \textit{right boundary}.
\item The boundary $\partial \Sigma$ is a multi-pointed manifold.
\item For each singed object entry of $a_i$, the surface $\Sigma$ is equipped with a marked arc with the mark $a_i$.
\item The genus of $\Sigma$ is the sum of the all integer components $a_i$ except for the first and the second integers.
\end{enumerate}

A \textit{$d$-homeomorphism} of decorated surfaces is a degree 1 homeomorphism of the underlying surfaces preserving the order of boundary components, base points, orientation, the distinguished arcs together with their orientations, marks, and order.

There is a natural \textit{negation} of the structure on a decorated surface.
First, for a type $t=(m,n; a_1, a_2, \dots, a_p)$ we define its opposite type $-t=(m,n; b_1, b_2,\dots, b_p)$ as follows.
If $a_i$ is an integer entry, then let $b_i=a_i$.
If $a_i=(W,\nu)$ is a mark, then let $b_i=(W, -\nu)$.
For a decorated surface $\Sigma$, its opposite decorated surface $-\Sigma$ is obtained from $\Sigma$ by reversing the orientation of $\Sigma$, reversing the orientation of its distinguished arcs, and multiplying the signs of all distinguished arcs by $-1$ while keeping the labels and the order of these arcs.
Note that the decorated type of $-\Sigma$ is the opposite type of $\Sigma$.

\subsubsection{Remark}\label{subsec:Remark}
In the Reshetikhin-Turaev theory for the cobordisms without corners, a decorated type is denoted by
\[t_{\text{RT}}=(g; (W_1, \nu_1), \dots, (W_m, \nu_m)),\]
where $g$ is an integer indicating a genus and $W_i$ is an object of a modular category and $\nu_i$ is either $1$ or $-1$ for $i=1, \dots, m$.
In our notation, this decorated type is expressed by the type
\[t=(0, 0; (W_1, \nu_1), \dots, (W_m, \nu_m), 1,1,\dots, 1),\]
where the number of $1$'s is $g$.
Thus our theory includes the RT theory.
\subsubsection{Standard surfaces}

For each type $t$ we define the standard surface of type $t$. 
%
Let $t=(m,n; a_1, a_2, \dots, a_p)$ be a decorated type.
To construct the standard surface, we first define a ``block" of a ribbon graph, which can be thought of an elementary core of the standard surface.
For a mark $a=(W, \nu)$, the block for $(W, \nu)$ is defined to be a $1\times 1$ square coupon in $\R^2 $ and a length 1 band attached to the top of the coupon and the band is colored by $W$ if $\nu=1$ and $W^*$ if $\nu=-1$.
See Figure \ref{fig:blockmark}.

\begin{figure}[h]
\center
\includegraphics[width=1.2in]{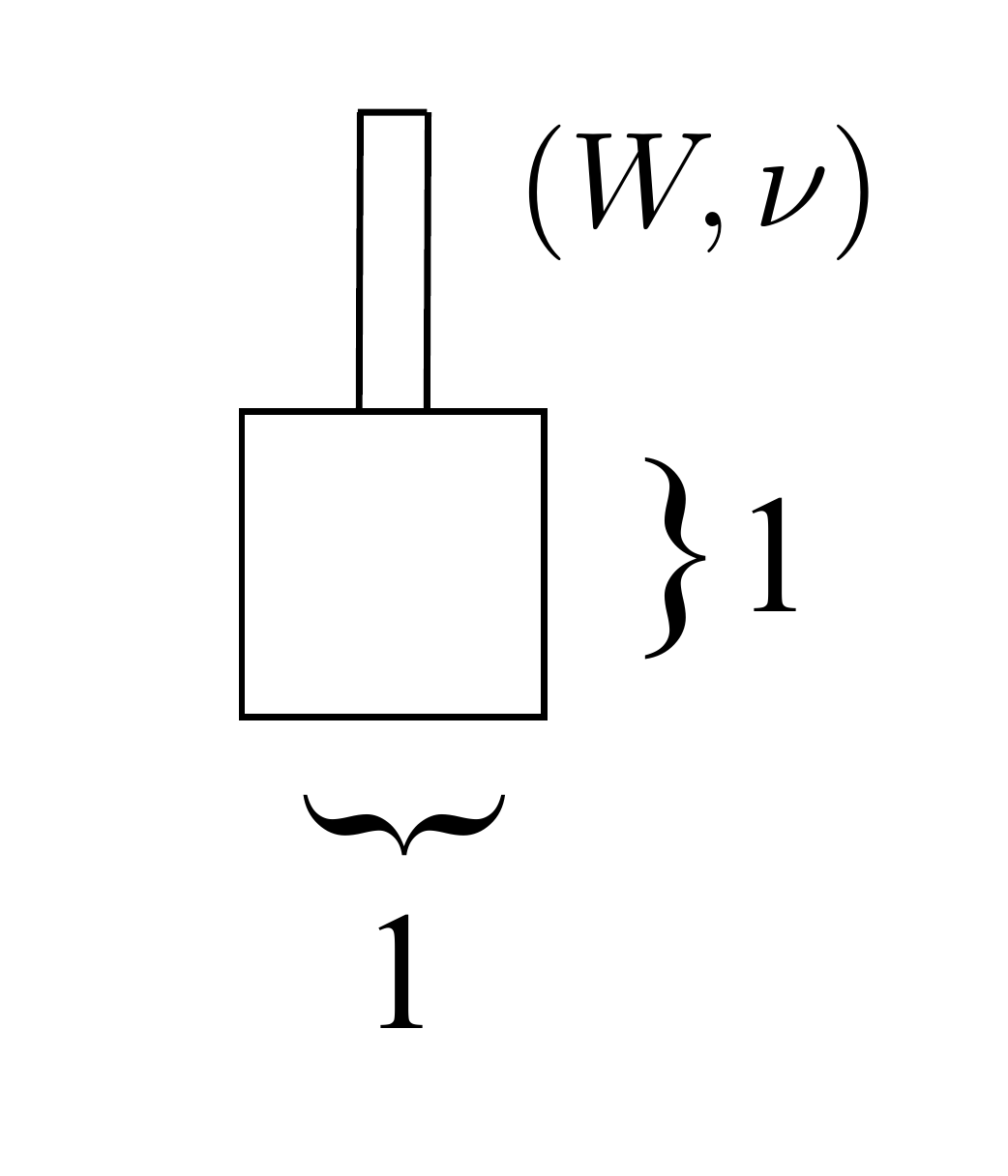}
\caption{The block for $(W,\nu)$}
\label{fig:blockmark}
\end{figure}
The block for a positive integer $a$ consists of a $1 \times 1$ square coupon in $\R^2$ and rainbow like bands with $a$ bands on the top of the square.
These bands are not colored and their cores are oriented from right to left. 
See Figure \ref{fig:blockrainbow}.

\begin{figure}[h]
\center
\includegraphics[width=2.2in]{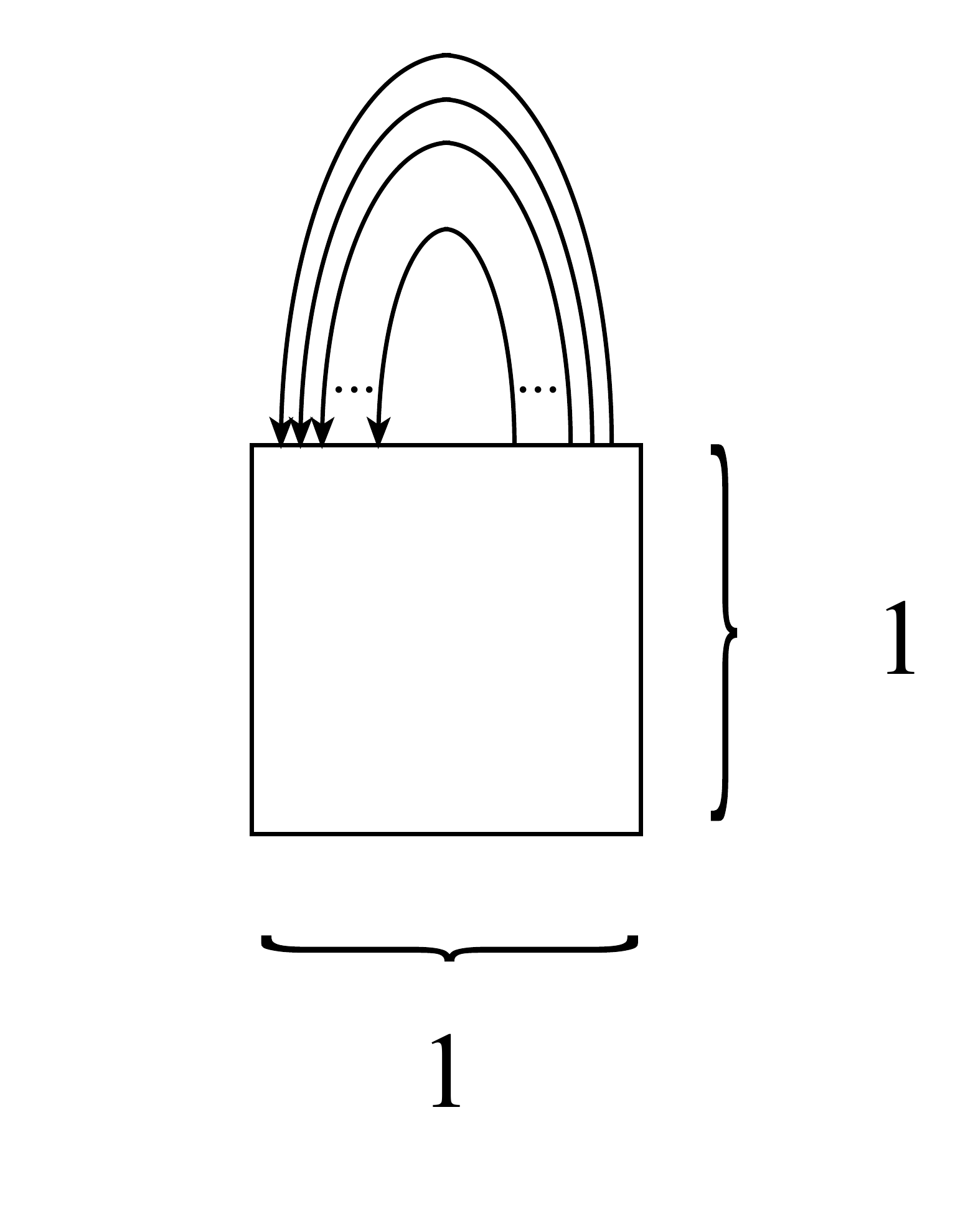}
\caption{The block for an integer}
\label{fig:blockrainbow}
\end{figure}

For the first entry integer $m$ of the type $t$, the block for $m$ is defined as in the left figure of  Figure \ref{fig:side ribbons}.
There are $m$ bands attached to the top of the square and the bands are bent so that the ends of bands have the same $x$-coordinates as in the figure.
Similarly, for the second entry integer $n$ of the type $t$, the block for $n$ is defined as in the right figure of Figure \ref{fig:side ribbons}.
For each integer, the left and the right ribbon graphs in Figure \ref{fig:side ribbons} are mirror reflection with respect to $y$-axis.

\begin{figure}[h]
\center
\includegraphics[width=2.2in]{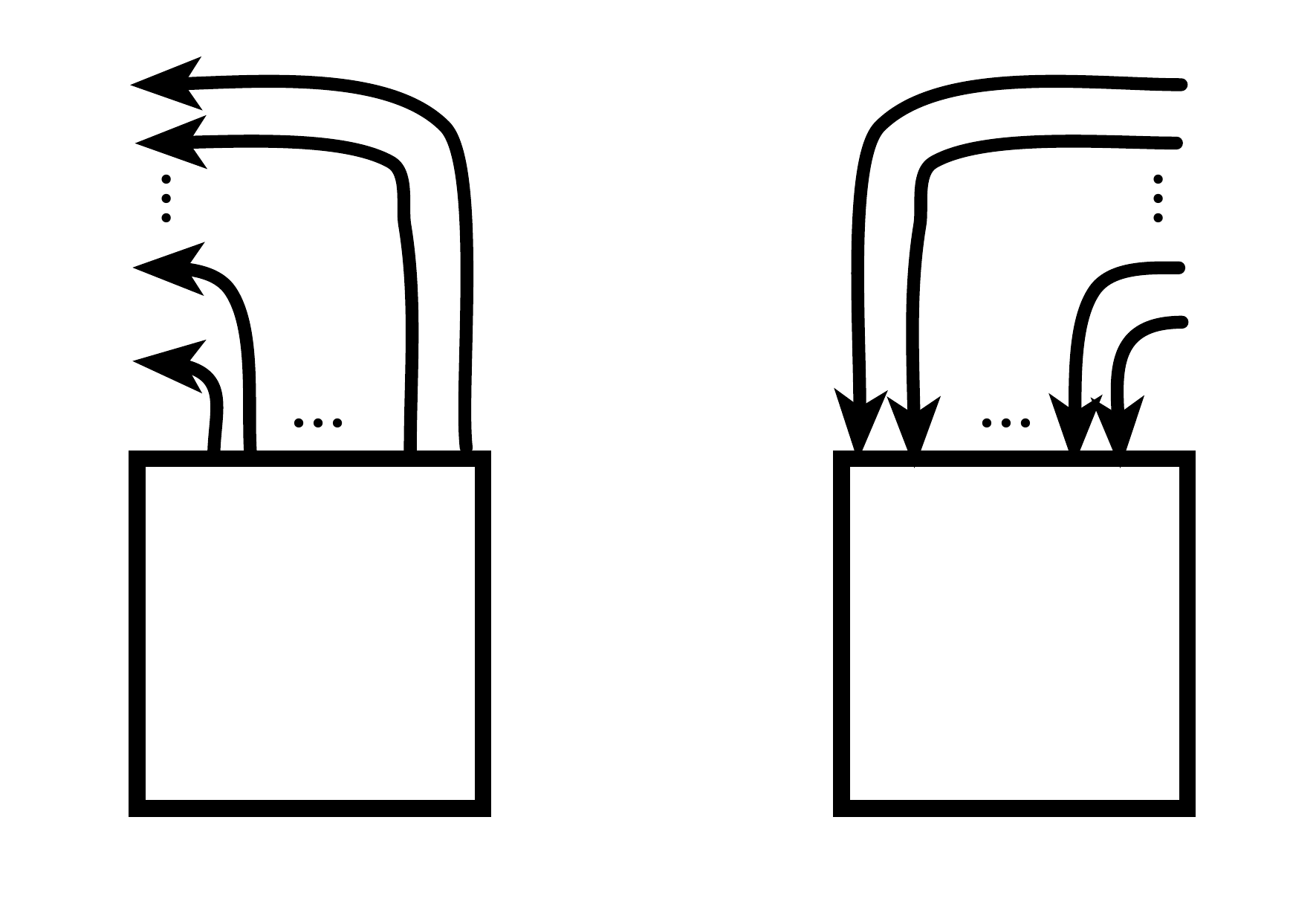}
\caption{The block for the fist and the second integers}
\label{fig:side ribbons}
\end{figure}

Now let $R_t$ be a ribbon graph in $\R^3$ constructed by arranging, in the strip $\R \times 0 \times [-1, 1] \subset \R^3$, 
the block for $m$ so that the top left corner is at $(0,0,0)$ and
the block for $a_i$  so that the top left corner of the square of the block is located at $(i,0,0)$ for $i=1,\dots, p$ and the block for $n$ so that the top left corner is at $(p+1,0,0)$.
We delete the joint segments of the coupons and make it a single coupon with length $p+2$.
Let $R_t$ denote the resulting ribbon graph.
See Figure \ref{fig:Rtnew} for an example of $R_t$ with the type 
\[t=(2,3; (W_1, \nu_1), 1, (W_2, \nu_2),3, (W_3, \nu_3),2).\]

\begin{figure}[h]
\center
\includegraphics[width=4in]{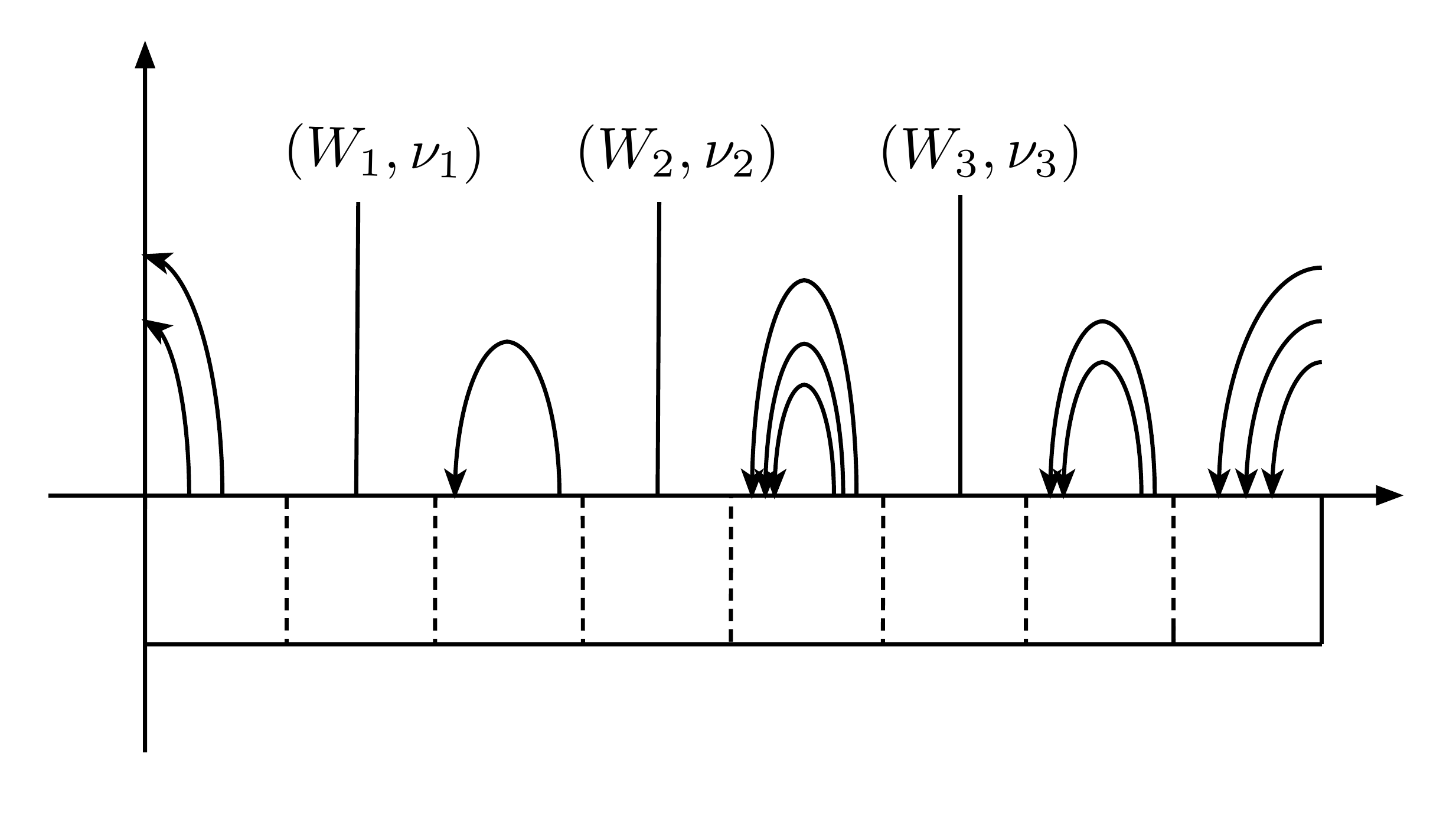}
\caption{The ribbon graph $R_t$}
\label{fig:Rtnew}
\end{figure}

Let $l$ be the number of entries in the type $t$, which is the width of the coupon in $R_t$.
Fix a close regular neighborhood $U_t$ of the ribbon graph $R_t$ in the stripe $[0,l]\times \R \times [-2,1] \subset \R^3$.
We provide $U_t$ with right-handed orientation and provide the boundary surface $\partial U_t$ with the induced orientation.
We assume by shrinking the coupon slightly so that the graph $R_t$ intersect with $\partial U_t$ only at the ends of short bands.
If a band has a mark $(W_i, \nu_i)$, provide the intersection arc with this mark.
The surface $\partial U_t$ with these intersection arcs with marks and $m+n$ non-marked arcs is called the \textit{capped standard surface} for the type $t$ and denoted by $\hat{\Sigma}_t$.
Fix an embedding of the disjoint union of $m+n$ 2-dimensional  disks $D^2$'s into $\hat \Sigma_t$ so that each boundary circle enclose exactly one non-marked arc of $\hat \Sigma_t$.
Each image of $[-1/2,1/2]\subset D^2$ is one of the arcs.
Cutting out the image of the interior of these disks we obtain a surface with marked arcs and boundary. 
Each boundary component has a base point which is an image of $(0, -1)$.
We assume that the intersection of planes $\{0\}\times \R^2$ and $\{l\}\times \R^2$ with $U_t$ are those embedded disks.
The resulting surface is called the \textit{standard surface} of type $t$ and denoted by $\Sigma_t$.
The boundary components of $\Sigma_t$ corresponding to the uncolored left $m$ bands are called the left boundary and denoted by $\leftb \Sigma_t$ and the boundary components corresponding to the uncolored right $n$ bands are called the right boundary and denoted by $\rightb \Sigma_t$.
The left boundary circles are ordered according to the order of the left bands ordered from left to right.
The right boundary circles are ordered according to the order of the right bands ordered from right to left.
The 3-manifold $U_t$ with the ribbon graph $R_t$ sitting inside $U_t$ is called the \textit{standard handle body} for type $t$.
\begin{figure}[h]
\center
\includegraphics[width=3in]{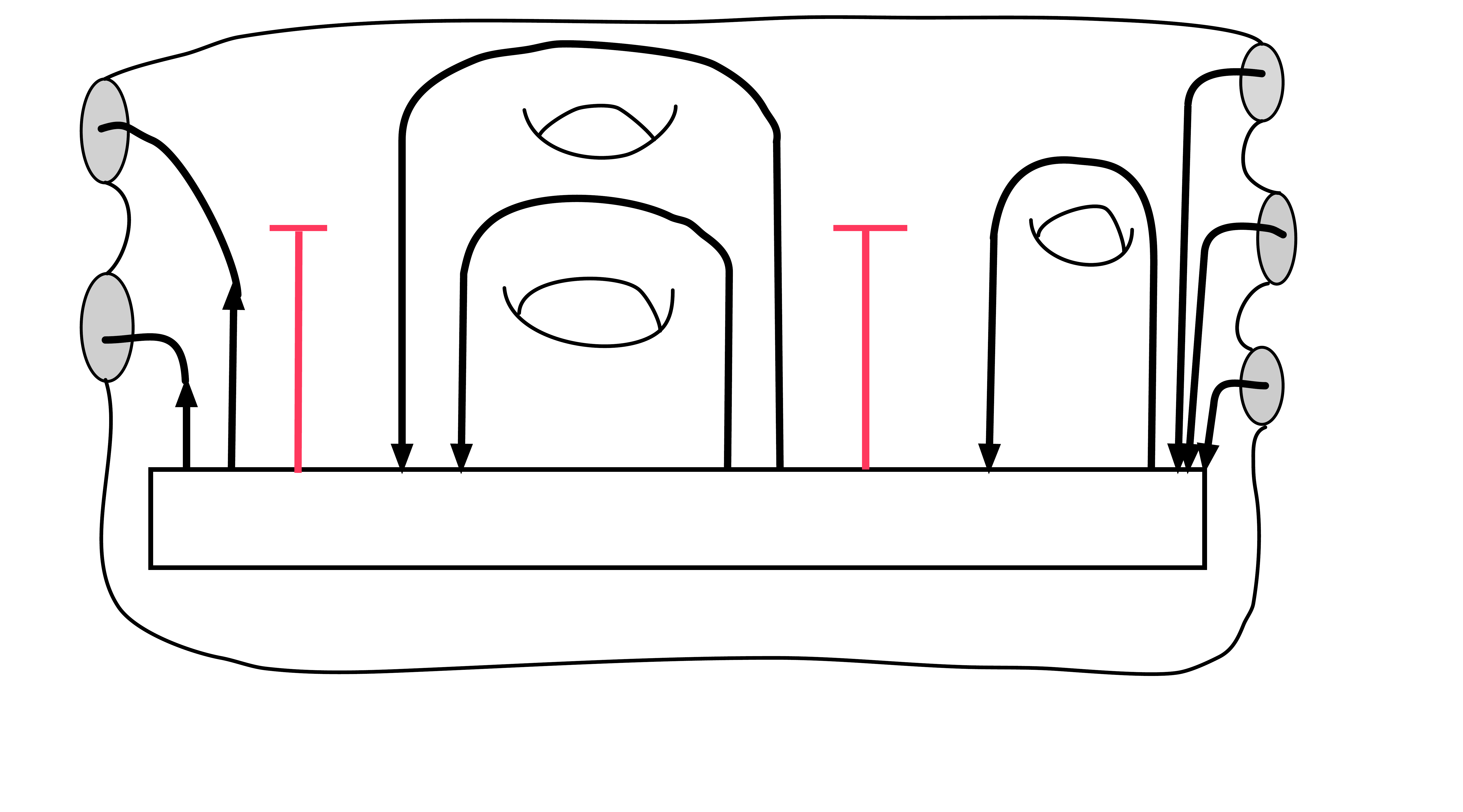}
\caption{The standard handle body and embedded disks}
\label{fig:capped standard handlebody}
\end{figure}

Analogously, consider the mirror reflection $-R_t:=\mir(R_t)$, where $\mir:\R^3 \to \R^3$ is a reflection with respect to a plane $\R^2\times \{1/2\} \subset \R^3$.
Set $U_t^-=\mir(U_t)$. 
We provide $U_t^-$ with right-handed orientation and provide $\partial(U_t^-)$ with the induced orientation.
For the $i$-th arc of the intersection $-R_t \cap \partial (U_t^-)$, we assign marks $(W_i, -\nu_i)$. 
Set $\Sigma_t^-:=\partial U_t^-$.

If we confine ourselves to closed surfaces, the definition of the standard surfaces are minor modification of that of Turaev's.
For our purpose, we need to consider gluings of surfaces along boundaries.
Thus we need to deal with the composition of these data we defined.
Two types $t=(l,m; a_1, a_2, \dots, a_p)$ and $s=(m',n; b_1, b_2, \dots, b_q)$ are said to be \textit{composable} if $m=m'$.
If they are composable, the composition of $t$ and $s$ is defined to be
\begin{equation}\label{equ:composition of types}
t\circ s =(l,n; a_1, a_2, \dots, a_p, m-1, b_1, b_2,\dots, b_q).
\end{equation}


As we need it later, we also define $D_n$ (Figure \ref{fig:Dn}) to be the disjoint union of $n$ cylinder $D^2\times [0, 1]$, where $D^2=\{ (x,y)\in \R^2 \mid x^2+y^2 \leq 1 \}$, with an uncolored untwisted band $[-1/2, 1/2] \times [0, 1]$ in each cylinder  that only intersects with the boundary of the cylinder at the bottom disk $D^2 \times \{0\}$ and the top disk $D^2 \times \{1\}$ transversally. 
Let $C(n)=\sqcup_n \partial (D^2) \times [0,1]$. 
The space $C(n)$ is the boundary of $D_n$ minus the interior of the union of the top boundary $\sqcup_n \partial(D^2) \times \{1\}$ and the bottom boundary $\sqcup_n \partial(D^2) \times \{0\}$.
The points in the boundary of $C(n)$ corresponding to the point $(0,-1) \times \{0\}$ and $(0,-1)\times \{1\}$ in $D^2 \times [0,1]$ are base points of $C(n)$.
We provide $D_n$ with right-handed orientation and provide the boundary surface $C(n)$ with the induced orientation.
Let $\mathrm{ref}:D_n \to D_n$  be an orientation reversing homeomorphism  that is induced by the map sending $(x, y)\times \{t\}$ to $(-x, y) \times \{t\}$ in $D^2 \times [0, 1]$.
Thus the map $\mathrm{ref}$ is a reflection map with respect to $y$-$z$ plane in $\R^3$.
Restricting on $C(n)$, the map $\mathrm{ref}$ induces an orientation reversing map on $C(n)$, which is also denoted by $\mathrm{ref}$.
\begin{figure}[h]
\center
\includegraphics[width=4.4in]{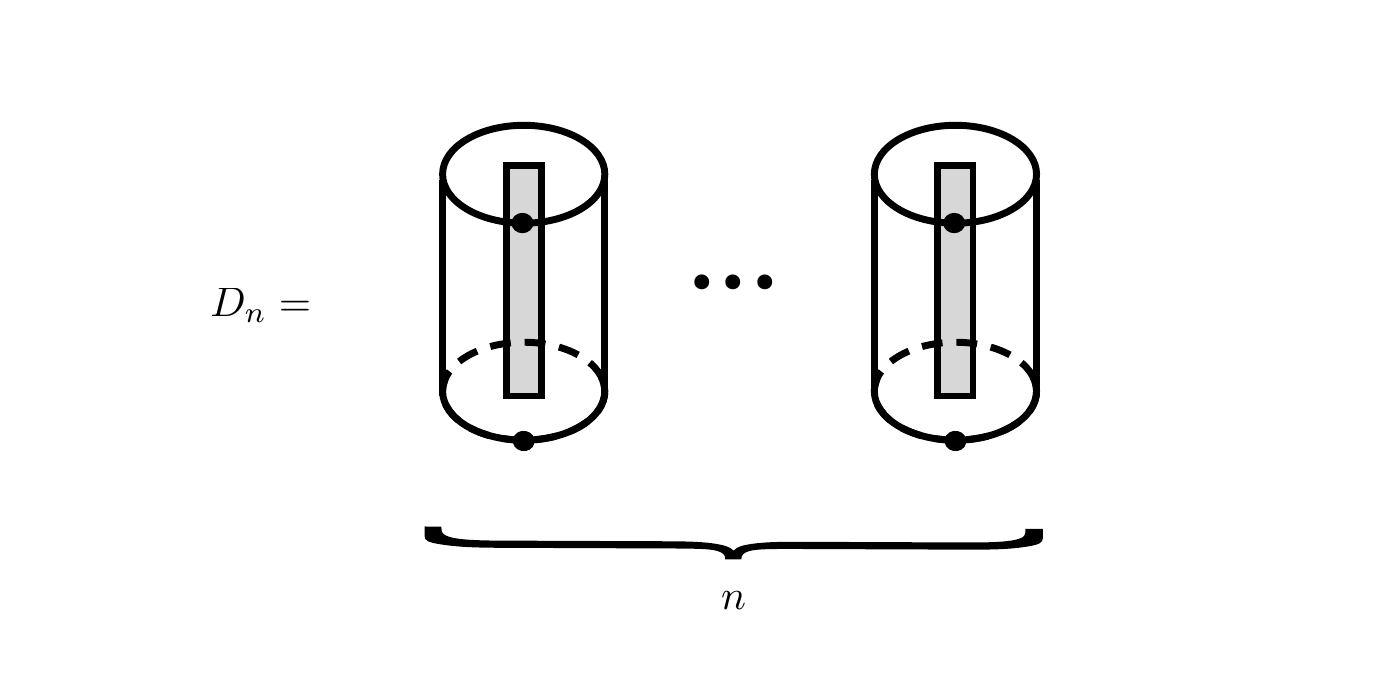}
\caption{The cylinder $D_n$}
\label{fig:Dn}
\end{figure}

\begin{Definition}[1-morphisms of $\Co$]\label{def:1-morphism of Co}
Let $X$ and $Y$ be objects of $\Co$.
A 1-morphism from $X$ to $Y$ is defined to be the standard surface $\Sigma_t$ for a decorated type $t$ depending on $X$ and $Y$ as follows.
\begin{enumerate}
\item If $X=\nstand{m}$ and $Y=\nstand{n}$, then $t=(m, n;a_1, a_2, \dots, a_p)$.
\item If $X=\lempty$ and $Y=\nstand{n}$, then $t=(0, n;a_1, a_2, \dots, a_p)$.
\item If $X=\nstand{m}$ and $Y=\rempty$, then $t=(m, 0;a_1, a_2, \dots, a_p)$.
\item If $X=\lempty$ and $Y=\rempty$, then $t=(0, 0;a_1, a_2, \dots, a_p)$. 
\end{enumerate}
We add formal identity symbols $\id_n:\nstand{n} \to \nstand{n}$ in the set of 1-morphism for each natural number $n$.
If we agree with the convention that  the source  object $X=\nstand{0}$ denotes $\lempty$ and  the target object $Y=\nstand{0}$ denotes $\rempty$, then the definition (2)-(4) are special cases of (1).

\end{Definition}

We will explain the role of the formal symbol $\id_n$ later when we discuss compositions of $\Co$.
\subsection{2-morphisms of $\Co$}
A 2-morphism of $\Co$ will be an equivalence class of a \textit{decorated} cobordism, which we are going to define.
Let
 \[ t=(m,n; a_1, a_2, \dots, a_p) \mbox{ and } s=(m,n; b_1, b_2, \dots, b_q)\] be types.
Let $\Sigma_{t}$ and $\Sigma_{s}$ be 1-morphisms from $\nstand{m}$ to $\nstand{m}$.
We define a \textit{decorated cobordism with corner} from $\Sigma_{t}$ to $\Sigma_{s}$ as follows.
Consider a compact oriented 3-manifold $M$ whose boundary decomposes into four pieces as 
\[\partial M= \bottomb M \cup \topb M \cup \leftb M \cup \rightb M,\]
such that 
\begin{enumerate}
\item $\bottomb M \cap \topb M=\emptyset$, $\leftb M \cap \rightb M=\emptyset$
\item The intersections $\bottomb M \cap \leftb M$ and $\topb M \cap \leftb M$ consist of $m$ circles, respectively.
\item The intersections $\bottomb M \cap \rightb M$ and $\topb M \cap \rightb M$ consist of $n$ circles, respectively.
\item The surfaces $\bottomb M$ and $\topb M$ are decorated surface of type $-t$ and $-s$, respectively.
\item The surfaces $\leftb M$ and $\rightb M$ are multi-pointed surface which are homeomorphic to $m$ cylinder over a circle and $n$ cylinder over a circle, respectively.
\item The base points of these four surfaces agree on their intersections.
\end{enumerate}
A ribbon graph $\Omega$ in $M$ meets $\partial M$ transversely along the distinguished arcs in $\bottomb M \cup \topb M \subset \partial M$ which are bases of certain bands of $\Omega$.
Such a manifold $M$ together with a $v$-colored ribbon graph $\Omega$ is said to be \textit{decorated} if the surfaces $\bottomb M$, $\topb M$, $\leftb M$, and $\rightb M$ are \textit{parametrized}.
This means that there are $d$-homeomorphism (for bottom and top) and base point preserving homeomorphisms (for left and right) 
\[\phi_B:\Sigma_{t} \to -\bottomb M,\]
\[\phi_T: \Sigma_{s}^{-} \to \topb M,\]
\[\phi_L: C(m) \to \leftb M, \]
\[\phi_R: C(n) \to \rightb M. \]
We call $\phi=(\phi_B,\phi_T,\phi_L,\phi_R)$ a \textit{parametrization} of $\partial M$ (or  $M$).

A $d$-homeomorphism of decorated 3-manifolds is a homeomorphism of the underlying 3-manifold preserving all additional structures in question.
In the sequel, we often call $d$-homeomorphism simply homeomorphism when the domain and the range are decorated cobordisms with corners.

We say that such pairs $(M, \phi)$ and $(M', \phi')$ are equivalent if there exist a ($d$-)homeomorphism $f$ from $M$ to $M'$ such that it commutes with parametrizations: $f\circ\phi_*= \phi'_*$ for $*=B$, $T$, $L$, $R$.
This is clearly an equivalence relation.

\begin{Definition}[2-morphisms of $\Co$]
Let $\Sigma_{t}$ and $\Sigma_{s}$ be 1-morphisms from $\nstand{m}$ to $\nstand{m}$.
A \textit{2-morphism} from $\Sigma_{t}$ to $\Sigma_{s}$ is the class $[(M,\phi)]$ of a pair of a decorated cobordism with corners from $\Sigma_t$ to $\Sigma_s$ and its parametrization $\phi$.
For each 1-morphism $X$, we add the formal identity symbol $\id_{X}$.
If one of the 1-morphisms is a formal identity 1-morphism $\id_n$, then there is no 2-morphism unless both are formal identity 1-morphisms and for this case there is only one formal identity 2-morphism $\id_{\id_n}$.
\end{Definition}

Let $(M,\phi)$ be a representative of the 2-morphisms $[(M,\phi)]$ from $\Sigma_t$ to $\Sigma_s$.
We define the \textit{standard boundary} $\Sigma(\phi)$ for the parametrization $\phi$ to be the surface obtained from $\Sigma_t$, $\Sigma_s^-$, $C(m)$, and $C(n)$ by identifying the boundaries via homeomorphisms of boundaries 
\[g_{BL}:=\phi_L^{-1}\circ \phi_B |_{\leftb \Sigma_t},\]
\[g_{BR}:=\phi_R^{-1}\circ \phi_B |_{\rightb \Sigma_t},\]
\[g_{TL}:=\phi_L^{-1}\circ \phi_T |_{\leftb \Sigma^{-}_s},\]
\[g_{TR}:=\phi_R^{-1}\circ \phi_T |_{\rightb \Sigma^{-}_s}.\]
Hence 
\[\Sigma(\phi)=(\Sigma_t \sqcup \Sigma^{-}_s) \cup_{\mbox{glue}}( C(m) \sqcup C(n)),\] 
where ``glue'' means the identification of boundaries by the homeomorphisms $g_{BL}$, $g_{BR}$, $g_{TL}$, $g_{TR}$.
Then the parametrization $\phi$ of $M$ induces the homeomorphism, also denoted by $\phi$, from $\Sigma(\phi)$ to $\partial M$.

In addition to decorated 3-manifolds with specific parametrizations, we will add formal identities in the set of 2-morphism.
The details are explained below when we deal with compositions.

We now introduce the notion of \textit{isotopy} in decorated cobordisms with corners.
Recall that if $\Sigma$ is a parametrized $d$-surface, then the cylinder $\Sigma \times [0, 1]$ has a natural structure of a decorated cobordism.
\begin{Definition}
Let $\phi$ and $\phi'$ be two parametrizations of $M$. 
We say that $\phi=(\phi_B, \phi_T,\phi_L,\phi_R)$ and $\phi'=(\phi'_B, \phi'_T,\phi'_L,\phi'_R)$ are \textit{isotopic} if the following conditions are satisfied.
Let $S_*$ be a standard boundary corresponding to each $*=B$, $T$, $L$, $R$.
\begin{enumerate}
\item $\phi_*$ is equal to $\phi'_*$ on the boundary circles $\partial S_*$ for each $*$.
\item  
There is a homeomorphism $F_*:S_* \times [0, 1]\to \partial_* M \times [0,1]$  for each $*$ satisfying the following conditions.
\begin{enumerate}
\item  $F_*(x,0)=\phi_*(x) \times \{0\}$ and $F_*(x, 1)=\phi'_*(x)\times \{1\}$.
\item Its restriction on $\partial S_* \times [0,1]$ agrees with $\phi_* \times \id_{[0,1]}$
\end{enumerate}
\end{enumerate}

\end{Definition}

If two parametrizations $\phi$ and $\phi'$ are isotopic, then we have $\Sigma(\phi)=\Sigma(\phi')$ since the gluing maps are the same by the condition (1).
The condition (b) guarantees that we can combine four homeomorphisms $F_*$ with $*=B$, $T$, $L$, $R$ into a homeomorphism 
\[F:\Sigma(\phi) \times [0,1] \to \partial M \times [0,1]\]
such that $F(x, 0)=\phi(x) \times \{0\}$ and $F(x,1)=\phi'(x) \times \{1\}$.

\begin{lemma}\label{lem:isotopy equivalence}
	Let $\phi$ and $\phi'$ be  two parametrizations on a decorated cobordism $M$. 
	Assume that $ \phi $ and $ \phi' $ are isotopic. 
	Then $(M, \phi)$ is equivalent to $(M, \phi')$.
\end{lemma}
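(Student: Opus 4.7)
The plan is to exhibit a $d$-homeomorphism $f \colon M \to M$ with $f \circ \phi_* = \phi'_*$ for each $* \in \{B, T, L, R\}$, constructed as a modification of the identity supported in a collar neighborhood of $\partial M$, using the given isotopy data.

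First, I would amalgamate the four pieces $F_B, F_T, F_L, F_R$ into a single homeomorphism $F \colon \Sigma(\phi) \times [0,1] \to \partial M \times [0,1]$. This is permitted because condition (1) forces each $F_*$ to agree with $\phi_* \times \id_{[0,1]}$ on every corner circle $\partial S_*$, so the four pieces glue coherently across the corners of $\partial M$. Identifying $\Sigma(\phi)$ with $\partial M$ via $\phi$, rewrite $F$ as a homeomorphism $\widetilde F \colon \partial M \times [0,1] \to \partial M \times [0,1]$ that is the identity on $\partial M \times \{0\}$ and equals $(\phi' \circ \phi^{-1}) \times \id$ on $\partial M \times \{1\}$. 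From $\widetilde F$, extract a level-preserving ambient isotopy $H_s \colon \partial M \to \partial M$ with $H_0 = \id$ and $H_1 = \phi' \circ \phi^{-1}$, consisting of $d$-homeomorphisms; since each $F_*$ preserves orientation, marked arcs, signs, and base points, and the pieces agree on corner circles, the $H_s$ can be taken to preserve every decoration on $\partial M$ and to fix each corner circle pointwise.

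Next, I would fix a closed collar $c \colon \partial M \times [0,1] \hookrightarrow M$ chosen compatibly with the decorations: the ribbon graph $\Omega$ meets $c(\partial M \times [0,1])$ as a product along $[0,1]$ (available because $\Omega$ meets $\partial M$ transversely along the bases of its bands), and the corner structure of $\partial M$ is respected. Define
\[
f(x) = \begin{cases} c\bigl(H_{1-s}(y),\, s\bigr), & x = c(y,s),\ s \in [0,1], \\ x, & x \notin c(\partial M \times [0,1)). \end{cases}
\]
Continuity at $s = 1$ is automatic since $H_0 = \id$, and $f$ is a homeomorphism with inverse $c(y,s) \mapsto c\bigl(H_{1-s}^{-1}(y), s\bigr)$ on the collar and the identity elsewhere. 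Because the collar is product-compatible with $\Omega$ and each $H_s$ is a $d$-homeomorphism, $f$ preserves orientation, the ribbon graph, all marks and signs, arc orientations, and base points; hence $f$ is a $d$-homeomorphism of $M$. At $s=0$ we have $f|_{\partial M} = H_1 = \phi' \circ \phi^{-1}$, so $f \circ \phi_* = \phi'_*$ for every $*$, proving the equivalence $(M,\phi) \sim (M,\phi')$.

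The main technical obstacle is the first step: assembling the four $F_*$ into a single ambient isotopy of $\partial M$ through $d$-homeomorphisms preserving every piece of decoration, uniformly across the corners where the four boundary pieces meet. Condition (1) is precisely what makes this gluing possible, and condition (2) then guarantees that the resulting isotopy is well-behaved on the entire boundary. Once this combined ambient isotopy is in hand, the remainder is the standard collar-and-shear construction sketched above.
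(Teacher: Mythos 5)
Your proof is correct in essence and follows the same basic strategy as the paper's: build a self-$d$-homeomorphism $f$ of $M$ supported in a collar of $\partial M$, equal to $\phi'\circ\phi^{-1}$ on $\partial M$ and to the identity on the inner end of the collar. Both arguments amount to a ``collar shear.'' The one substantive divergence is in how the isotopy data is used. You extract a level-preserving ambient isotopy $H_s\colon \partial M\to\partial M$ (i.e., a path in $\mathrm{Homeo}(\partial M)$ from $\id$ to $\phi'\circ\phi^{-1}$) from the combined homeomorphism $\widetilde F$, and then write $f(c(y,s))=c(H_{1-s}(y),s)$. But the paper's definition of isotopic parametrizations only provides a homeomorphism $F\colon\Sigma(\phi)\times[0,1]\to\partial M\times[0,1]$ with prescribed restrictions at the two ends of the cube --- it is not assumed level-preserving, so the ``slices'' $H_s$ are not directly defined. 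The paper avoids this by never slicing $F$: instead it glues an outer collar $\partial M\times[0,1]$ to $M$, uses $F$ (as a single homeomorphism of that collar, no level-preservation needed) in the middle of a four-fold composite $f^{-1}\circ(\id_M\cup F)\circ(\id_M\cup(\phi^{-1}\times\id))\circ f$, where $f$ is the collar-stretch. Your argument can be patched either by first straightening $\widetilde F$ to a level-preserving homeomorphism (a standard but non-trivial step you should cite or supply), or by reformulating your formula to use $\widetilde F$ directly on the collar as the paper does. Apart from that, your discussion of why $f$ is a $d$-homeomorphism (product compatibility of $\Omega$ with the collar, $H_s$ preserving arcs and base points) correctly identifies what needs checking, and corresponds to the paper's use of the $d$-homeomorphism property of $F$ and $\phi$.
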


\begin{proof}
		Let us just write $\Sigma$ for $\Sigma(\phi)=\Sigma(\phi')$.
	Let $F:\Sigma\times [0,1] \to \partial M \times [0, 1]$ be a $d$-homeomorphism that gives an isotopy between $\phi$ and $\phi'$ so that we have $F(x, 0)=\phi(x)\times \{0\}$ and $F(x, 1)=\phi'(x)\times \{1\}$.
	Consider a collar neighborhood $U=\partial M \times [-1,0]$ of $\partial M$ in $M$.
	Also consider the space $M \cup_{\partial M \times \{0\}} (\partial M \times [0,1])$ obtained by attaching $\partial M \times [0,1]$ to $M$ along $\partial M\times \{0\}=\partial M$.
	Let $f:M \to M \cup_{\partial M \times \{0\}} (\partial M \times [0,1])$ be a map that is identity outside $U$ and sends each point $x \times t \in U$ with $x\in \partial M$ and $t\in[-1, 0]$ to the point
	\[ x \times (2t+1)\in U \cup_{\partial M \times \{0\}} (\partial M \times [0,1]) \subset  M \cup_{\partial M \times \{0\}} (\partial M \times [0,1]).\]
	See Figure \ref{fig:callar f}.
	It is easy to see that the map $f$ is a $d$-homeomorphism.
	\begin{figure}[h]
\center
\includegraphics[width=4.2in]{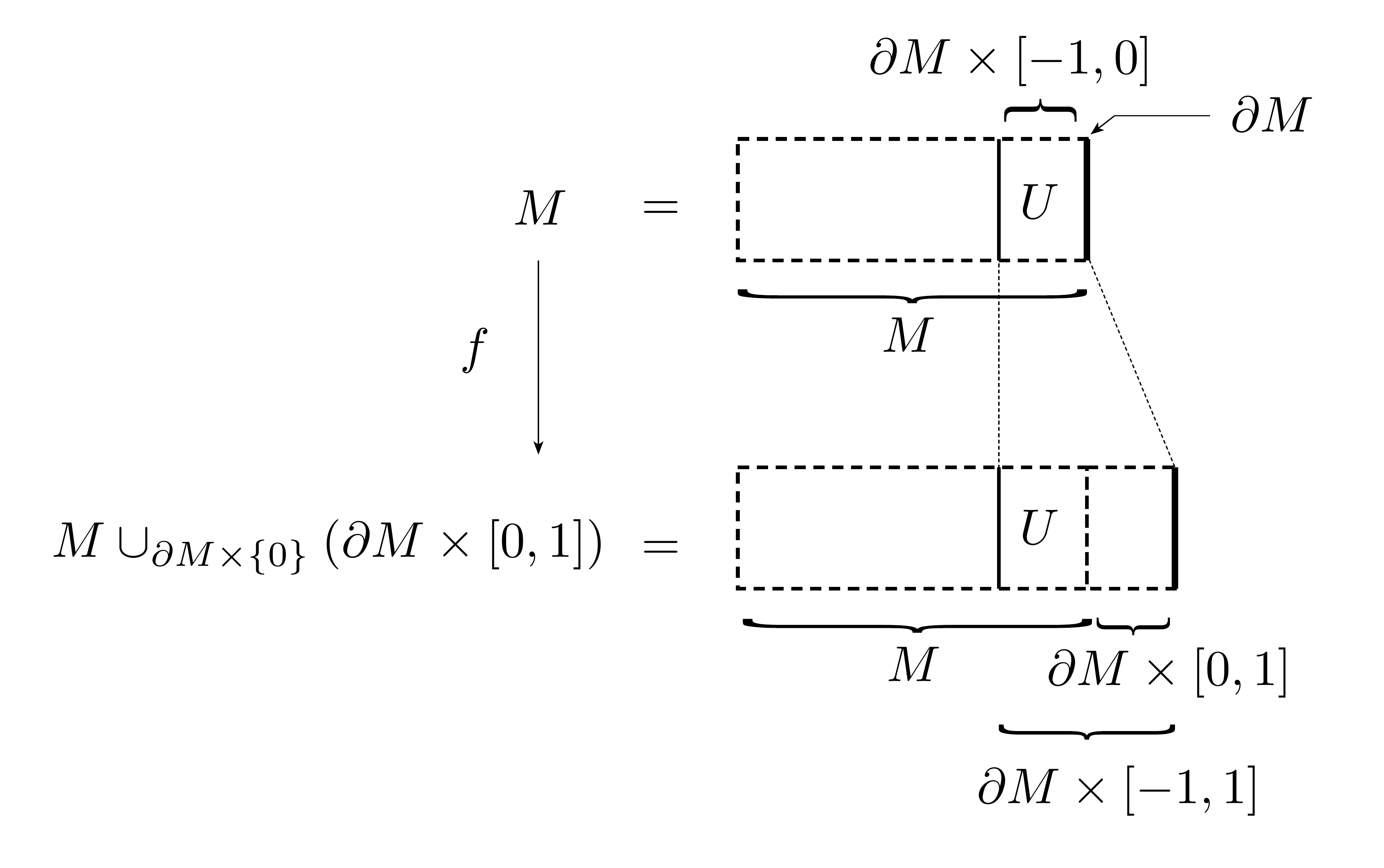}
\caption{The $d$-homeomorphism $f$}
\label{fig:callar f}
\end{figure}
	 
	Now the proof of the lemma is summarized into the following commutative diagram. 
	\begin{center}
	\begin{tikzpicture}

	\matrix (m) [matrix of nodes, column sep=3em, row sep=1em]
	{  & $M$ & $M \cup_{\partial M \times \{0\}} (\partial M \times [0,1])$ &\\
		$\Sigma$ &  &  &  $M\cup_{\phi^{-1}} (\Sigma \times [0,1])$ \\
		& $M$ & $M \cup_{\partial M \times \{0\}} (\partial M \times [0,1])$ & \\};
	
	\path[->, font=\scriptsize]
	(m-2-1) edge node[above] {$\phi$} (m-1-2);
	
	\path[->, font=\scriptsize]
	(m-1-2) edge node[above] {$f$} node[below]{$\sim$} (m-1-3);
	
	\path[->, font=\scriptsize]
	(m-1-3.east) edge node[auto] {$\id_M\cup (\phi^{-1} \times \id_{[0,1]})$}
	node[below right, sloped] {$\sim$} (m-2-4.north west);
	
	\path[->, font=\scriptsize]
	(m-3-3) edge node[above]{$f^{-1}$} node[auto] {$\sim$} (m-3-2);
	
	\path[->, font=\scriptsize]
	(m-2-4.south west) edge node[auto] {$\id_M\cup F$} 
	node[above left, sloped] {$\sim$} (m-3-3.east);
	
	\path[->, font=\scriptsize]
	(m-2-1) edge node[below] {$\phi'$} (m-3-2);
	
	\path[->, font=\scriptsize, dashed]
	(m-1-2) edge node[above] {} (m-3-2);
	
	\end{tikzpicture}
	\end{center}
	The collar homeomorphism  gives the homeomorphisms at the top and the bottom of the diagram above.
	The  space $ M \cup (\partial M_B \times [0,1])$ is further homeomorphic to $M\cup_{\phi^{-1}} (\Sigma \times [0,1])$, where we identify $\partial M$ with $\Sigma\times \{0\}$ by the inverse of the parametrization $\phi^{-1}$.
	The homeomorphism is given by the identity on $M$ and $[0, 1]$, and $\phi^{-1}$ from $\partial M$ to $\Sigma$.
	The next homeomorphism from $M\cup_{\phi^{-1}} (\Sigma \times [0,1])$ to $M \cup (\partial M \times [0,1])$ is given by the identity on $M$ and $F$ on the rest.
	The map $\id\cup F$ is compatible with the unions:
	every element $x\in \partial M$ is identified with $(\phi^{-1}(x), 0) \in\Sigma \times \{0\}$.
	This is, in turn, mapped to $F(\phi^{-1}(x), 0)=(\phi\circ\phi^{-1}(x), 0)=(x, 0) \in \partial M \times \{0\}$ and this is identified with $x=\id(x)\in \partial M$.
	Thus the map $\id \cup F$ is well-defined.
	Composing these homeomorphisms, we obtain a homeomorphism from $M$ to $M$ (the dashed arrow in the diagram).
	Now we show that this homeomorphism commutes with parametrizations.
	For each element $x\in \Sigma$, we have the following commutative diagram and it shows that the homeomorphism commutes with parametrizations.
	\begin{tikzpicture}

	\matrix (m) [matrix of nodes, column sep=3em, row sep=3em]
	{  & $\phi(x)\in M$ & $\phi(x) \times \{1\} \in M \cup (\partial M \times [0,1])$ \\
		$x \in \Sigma$ &  &    $x\times \{1\} \in M\cup_{\phi^{-1}} (\Sigma \times [0,1])$ \\
		& $\phi'(x)\in M$ & $F(x,1)=\phi'(x)\times \{1\} \in M \cup (\partial M \times [0,1])$  \\};
	
	\path[|->, font=\scriptsize]
	(m-2-1) edge node[above] {$\phi$}  (m-1-2);
	
	\path[|->, font=\scriptsize]
	(m-1-2) edge  (m-1-3);
	
	\path[|->, font=\scriptsize]
	(m-1-3.south) edge node[auto] {$\id\cup \phi^{-1} \cup \id$}
	(m-2-3);
	
	\path[|->, font=\scriptsize]
	(m-3-3) edge  (m-3-2);
	
	\path[|->, font=\scriptsize]
	(m-2-3) edge node[auto] {$\id\cup F$} 
	(m-3-3.north);
	
	\path[|->, font=\scriptsize]
	(m-2-1) edge node[below] {$\phi'$} (m-3-2);
	
	\end{tikzpicture}

\end{proof}
%
%
%

\subsection{Proving that $\Co$ is a 2-category}
Now that we defined the data $\Co$, in this section we will show the following proposition:
\begin{prop}
The data $\Co$ is a 2-category.
\end{prop}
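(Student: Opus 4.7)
The plan is to equip $\Co$ with three compositions — horizontal composition of 1-morphisms, vertical composition of 2-morphisms, and horizontal composition of 2-morphisms — together with identities, and then verify the axioms of a (weak) 2-category in the sense of B\'{e}nabou as reviewed in the Appendix. On 1-morphisms, for composable types $t$ and $s$ I set $\Sigma_t \circ \Sigma_s := \Sigma_{t \circ s}$, where $t \circ s$ is the composite type from~(1.3); since that formula is strictly associative on tuples, so is this operation. The formal symbols $\id_n$ play the role of strict units at the 1-cell level, and the pseudo-objects $\lempty,\rempty$ are handled identically. Nothing topological happens here — this is by design of the combinatorial 1-level.

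Vertical composition of $[M,\phi]: \Sigma_t \Rightarrow \Sigma_s$ and $[N,\psi]: \Sigma_s \Rightarrow \Sigma_r$, written $[N,\psi] \cdotv [M,\phi]$, is defined by gluing $\topb M$ to $\bottomb N$ via the identification $\psi_B \circ \phi_T^{-1}$. The bottom and top parametrizations of the composite are $\phi_B$ and $\psi_T$; the left (resp.\ right) parametrization is obtained by stacking $\phi_L$ with $\psi_L$ (resp.\ $\phi_R$ with $\psi_R$) along $C(m)$ (resp.\ $C(n)$) and rescaling back to the unit cylinder. Well-definedness on equivalence classes follows because a $d$-homeomorphism of $M$ commuting with $\phi$ and one of $N$ commuting with $\psi$ glue to a $d$-homeomorphism of the composite that commutes with the composite parametrization. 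Associativity at this level is strict, since both the manifold gluing and the concatenation on $C(m), C(n)$ are.

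Horizontal composition $[M_1,\phi_1] \circh [M_2,\phi_2]$, where $[M_i,\phi_i]: \Sigma_{t_i} \Rightarrow \Sigma_{s_i}$ and the right object of the first matches the left object of the second, is obtained by gluing $\rightb M_1$ to $\leftb M_2$ via $\phi_{2,L} \circ \refl \circ \phi_{1,R}^{-1}$. The glued bottom (resp.\ top) is a decorated surface of type $t_1 \circ t_2$ (resp.\ $s_1 \circ s_2$), homeomorphic but not equal to the standard surface $\Sigma_{t_1 \circ t_2}$ (resp.\ $\Sigma_{s_1 \circ s_2}$). For every pair of composable types I would fix once and for all a specific $d$-homeomorphism from the standard surface to such a glued surface, and use it to define the bottom and top parametrizations of the composite; the left and right parametrizations come directly from $\phi_{1,L}$ and $\phi_{2,R}$.

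The main obstacle is verifying associativity of $\circh$ and the interchange law. Given three horizontally composable 2-morphisms, the two bracketings yield canonically homeomorphic 3-manifolds, but the bottom and top parametrizations of the two composites arise from different compositions of the chosen normalizing homeomorphisms. The key step is to show that these two parametrizations agree on the base-pointed boundary circles of each standard surface component and are isotopic relative to those circles; Lemma~\ref{lem:isotopy equivalence} then upgrades this to an equivalence of 2-morphisms. The interchange law and the 2-cell unit laws — supplied by the formal symbols $\id_{\Sigma_t}$ and their cylinder representatives $\Sigma_t \times [0,1]$ — are handled in the same spirit: the underlying 3-manifolds match exactly, and any residual mismatch in the parametrizations is boundary-fixing and isotopic, so Lemma~\ref{lem:isotopy equivalence} concludes.
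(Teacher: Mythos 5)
Your overall architecture — fix compositions at all three levels, prove well-definedness on equivalence classes, then discharge the remaining coherence via Lemma~\ref{lem:isotopy equivalence} — matches the paper. However, there are two concrete errors.

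First, you claim that vertical associativity is \emph{strict} because ``both the manifold gluing and the concatenation on $C(m),C(n)$ are.'' This is false once you rescale back to the unit cylinder, which you correctly say you do. If $s_i$ denotes the stretching/rescaling step, then the two bracketings of a triple composite produce $(s_1\cup\id)\circ s_2$ versus $(\id\cup s_2)\circ s_1$ on $C(m)$, i.e.\ the subdivisions $[0,\tfrac14]\cup[\tfrac14,\tfrac12]\cup[\tfrac12,1]$ versus $[0,\tfrac12]\cup[\tfrac12,\tfrac34]\cup[\tfrac34,1]$. These are not equal, only isotopic — exactly the reparametrization issue from the fundamental group. The paper treats this correctly by proving the two parametrizations are isotopic relative to the boundary and then applying Lemma~\ref{lem:isotopy equivalence}. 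Your claim of strictness would let a reader skip that verification, which is actually required.

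Second, you say the 2-cell identities are ``supplied by the formal symbols $\id_{\Sigma_t}$ \emph{and their cylinder representatives} $\Sigma_t\times[0,1]$.'' The paper uses \emph{purely} formal units precisely because the cylinder $\Sigma_t\times[0,1]$ does \emph{not} have a canonical left/right boundary parametrization: there is no preferred homeomorphism from $C(m)$ to the side boundary $\partial\Sigma_t\times[0,1]$ at the 1-morphism level, since 1-morphism surfaces do not carry boundary parametrizations. Asserting a cylinder representative reintroduces an arbitrary choice the paper was designed to avoid (the exception being $m=n=0$, where there is no side boundary to worry about). You should drop the cylinder representative and use the formal units as purely formal, as the paper does. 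Incidentally, your appeal to isotopy for the interchange law, while not wrong, is more than is needed: the paper establishes it as a strict equality, since horizontal composition leaves the top/bottom parametrizations untouched and vertical composition leaves the side parametrizations untouched, so the two orders of composition give identical parametrizations on the nose.
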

Our convention about 2-categories is summarized in Section \ref{sec:appendix:bicategory}.
To claim that $\Co$ is a 2-category, we need to define several composition rules for 1-morphisms and 2-morphisms among other things.

Let $\Sigma_t :\nstand{l} \to \nstand{m}$ and $\Sigma_s :\nstand{m} \to \nstand{n}$ be two 1-morphism so that the target object of $\Sigma_t$ is the source object of $\Sigma_s$ (including the cases when $l=0$ or $n=0$). 
We define the composition of $\Sigma_t$ and $\Sigma_s$ to be $\Sigma_{t\circ s}$, where $t\circ s$ is the composition of types defined in (\ref{equ:composition of types}).
This composition is associative since the composition of types is associative. 
For each object $\nstand{n}$ with an integer $n$, we let the formal symbol $\id_n$ act as an identity.

Remark: note that the composition of 1-morphism is not a topological gluing of surfaces along boundaries.
As 1-morphism surfaces, we fixed the standard surfaces and we need that the composite of 1-morphisms is also a standard surface.
Also note that on the 1-morphism level, the boundary circles are not parametrized and hence there is no canonical homeomorphism of boundary circles.


\subsubsection{Vertical Gluing}
We define the vertical composition.
Let $[(M_1, \phi_1)]:\Sigma_{t_1}\Rightarrow \Sigma_{t_2}: \nstand{m} \to \nstand{n}$ and $[(M_2, \phi_2)]:\Sigma_{t_2}\Rightarrow \Sigma_{t_3}: \nstand{m} \to \nstand{n}$ be 2-morphisms of $\Co$ so that the target 1-morphism of $[M_1]$ is equal to the source 1-morphism of $[M_2]$.
We define the vertical composite $[M_1]\cdot [M_2]$ of $[M_1]$ and $[M_2]$.
The vertical composite will be a 2-morphism  $[M_1]\cdot [M_2]: \Sigma_{t_1}\Rightarrow \Sigma_{t_3}: \nstand{l} \to \nstand{n}$.
Let us fix representative $(M_1, \phi_1)$ and $(M_2, \phi_2)$ of these 2-morphisms.
We first glue $M_1$ and $M_2$ along the top boundary $\phi_1(\Sigma_{t_2}^-)$ of $M_1$ and the bottom boundary $\phi_2(\Sigma_{t_2})$ via the homeomorphism obtained from the composition of the following homeomorphisms
\[\partial M_1 \supset \phi_1(\Sigma_{t_2}^-) \xrightarrow{\phi_1^{-1}} \Sigma_{t_2}^- \xrightarrow{(\mir)^{-1}} \Sigma_{t_2} \xrightarrow{\phi_2} \phi_2(\Sigma_{t_2}) \subset \partial M_2.\]
Denote the resulting manifold by $M_1 \cdot M_2$.
Now we need to construct a parametrization from $\Sigma_{t_1} \sqcup \Sigma_{t_3}^{-} \sqcup C(m) \sqcup C(n)$ to the boundary of $M_1\cdot M_2$.
There is a natural parametrization, which we denote by $\phi_1\cdot_{\text{v}} \phi_2$, obtained as follows.
The map $\phi_1\cdot_{\text{v}} \phi_2$ restricts to $\phi_1$ and $\phi_2$ on $\Sigma_{t_1}$ and $\Sigma_{t_3}^-$.
This means that $(\phi_1 \cdotv \phi_2)_B=(\phi_1)_B$ and $(\phi_1 \cdotv \phi_2)_T=(\phi_2)_T$.
Next we define $(\phi_1 \cdotv \phi_2)_L$ as follows.
Let $C_1(m)$ and $C_2(m)$ be copies of $C(m)$ with $(\phi_i)_L: C_i(m) \to \rightb M_i$ for $i=1,2$.
We identify the top boundary of the cylinder $C_1(m)$ with the bottom boundary of $C_2(m)$ via the homeomorphism
\begin{equation}\label{equ:zeta vertical gluing map}
\zeta:=(\phi_2)^{-1}_L (\phi_2)_B (\mir)^{-1} (\phi_1)_T^{-1} (\phi_1)_L |_{\topb C_1(m)}.
\end{equation}
The following diagram summarizes the definition of $\zeta$.

\begin{center}
	\begin{tikzpicture}
\matrix (m) [matrix of nodes, column sep=3em, row sep=1em]
{  
$C_1(m)$ &  & & $C_2(m)$ \\
 &  $\Sigma_{t_2}^{-}$ &  $\Sigma_{t_2}$   \\
 $M_1$ &  & & $M_2$  \\
};

\path[->, font=\scriptsize]
(m-1-1) edge  node[auto] {$\zeta$} (m-1-4);

\path[->, font=\scriptsize]
(m-1-1) edge node[auto] {$(\phi_1)_L$}
 (m-3-1);

\path[->, font=\scriptsize]
(m-2-2) edge node[auto] {$(\phi_1)_T$}
 (m-3-1);

\path[->, font=\scriptsize]
(m-1-4) edge node[auto] {$(\phi_2)_L$}
 (m-3-4);

\path[->, font=\scriptsize]
(m-2-3) edge node[auto] {$(\phi_2)_B$}
 (m-3-4);

\path[->, font=\scriptsize]
(m-2-2) edge node[auto] {$(\mir)^{-1}$} (m-2-3);

\end{tikzpicture}
\end{center}
Then there is a natural homeomorphism extending $(\phi_1)_L$ and $(\phi_2)_L$ from $C_1(m) \cup_{\zeta} C_2(m)$ to $\rightb (M_1\cdot M_2)$.
We denote this map by $\phi_1 \cup_{\zeta} \phi_2$.
A problem is that $C_1(m) \cup_{\zeta} C_2(m)$ is not a standard surface.
However, this can be easily remedied thanks to the cylindrical structure of $C(m)=\nstand{m}\times [0,1]$.
Let us define the stretching map $s$ from $C(m)$ to $C_1(m) \cup_{\zeta} C_2(m)$ by sending $(x, t) \in C(m)$ to $(x, 2t) \in C_1(m)$ if $t \leq 1/2$ and to $(\zeta(x), 2t)$ if $t >1/2$.
We define the parametrization $(\phi_1 \cdotv \phi_2)_L: C(m) \to \rightb(M_1\cdot M_2)$ to be the composite $(\phi_1 \cup_{\zeta} \phi_2) \circ s$.
Similarly we define the right parametrization.


Next we need to show that a different choice of representative gives rise to an equivalent parametrized manifold.
Let $(N_1, \psi_1)$ and $(N_2, \psi_2)$ be another choice of representatives for $[(M_1, \phi_1)]$ and $[(M_2, \phi_2)]$, respectively.
By the definition of the equivalence, we have homeomorphisms $\alpha:N_1 \to M_1$ and $\beta:N_2\to M_2$ such that the parametrizations commute: $\phi_1= \alpha \circ \psi_1$ and $\phi_2= \beta \circ \psi_2$.
These homeomorphisms induce a homeomorphism $\alpha \cup \beta: N_1\cdot N_2 \to M_1\cdot M_2$ such that $(\alpha \cup \beta)|_{N_1}=\alpha$ and $(\alpha \cup \beta)|_{N_2}=\beta$.
This is well-defined since on the glued components, we have the following commutative diagram.
\[
\begin{CD}
\topb N_1 @> \psi_2\circ (\mir)^{-1}\circ \psi_1^{-1} >> \bottomb N_2\\
@VV \alpha V @VV \beta V\\
\topb M_1 @> \phi_2 \circ (\mir)^{-1} \circ \phi_1^{-1} >> \bottomb M_2\\
\end{CD}
\]
Then we claim that the homeomorphism $\alpha \cup \beta: N_1 \cdot N_2 \to M_1\cdot M_2$ commutes with parametrizations: $\alpha \circ(\psi_1\cdotv \psi_2)=\phi_1\cdotv \phi_2$.
On the bottom and top boundaries, this is clear.
Let us check this equation on the left boundary.
Recall that the left parametrization is defined to be $(\phi_1 \cdotv \phi_2)|_{C(m)}=(\phi_1 \cup_{\zeta} \phi_2)\circ s$, where $\zeta$ is the gluing map of two copies of cylinders $C_1(m)$ and $C_2(m)$ defined in (\ref{equ:zeta vertical gluing map}) and $s$ is the stretching map.
For the second pair $(N_1, \psi_1)$ and $(N_2, \psi_2)$, we also have $(\psi_1 \cdotv \psi_2)|_{C(m)}=(\psi_1 \cup_{\eta} \psi_2)\circ s$, where $\eta$ is the gluing map of cylinders defined by
\[\eta:=(\psi_2)^{-1}_L (\psi_2)_B (\mir)^{-1} (\psi_1)_T^{-1} (\psi_1)_L |_{\topb C_1(m)}.\]
Since we have the following commutative diagram, we have in fact $\zeta=\eta$.
The commutativity of the left rectangle in the diagram is the definition of $\eta$ and the commutativity of the right rectangle follows since $\alpha$ and $\beta$ commute with parametrizations.
Note that the big rectangle is the definition of $\zeta$ since $\alpha \circ (\psi_1)_L=(\phi_2)_L$ and $\beta \circ (\psi_2)_L=(\phi_2)_L$.
\begin{center}
	\begin{tikzpicture}
\matrix (m) [matrix of nodes, column sep=7em, row sep=1em]
{  
$\bottomb C_2(m)$ & $\leftb N_2$ & $\leftb M_2$ \\
$\topb C_1(m)$ & $\leftb N_1$ & $\leftb M_1$ \\
 };

\path[->, font=\scriptsize]
(m-2-1) edge node[auto] {$\eta$}
 (m-1-1);

\path[->, font=\scriptsize]
(m-2-1) edge node[below] {$(\psi_1)_L$}
 (m-2-2);

\path[->, font=\scriptsize]
(m-1-1) edge node[auto] {$(\psi_2)_L$}
 (m-1-2);

\path[->, font=\scriptsize]
(m-1-2) edge node[auto] {$\beta$}
 (m-1-3);

\path[->, font=\scriptsize]
(m-2-2) edge node[below] {$\alpha$}
 (m-2-3);
 
 \path[->, font=\scriptsize]
(m-2-3) edge node[auto] {$\phi_2 (\mir)^{-1} \phi_1^{-1}$}
 (m-1-3);
 
 \path[->, font=\scriptsize]
(m-2-2) edge node[auto] {$\psi_2 (\mir)^{-1} \psi_1^{-1}$}
 (m-1-2);
 
\end{tikzpicture}
\end{center}
The commutativity of this diagram also shows that we have 
\[(\alpha \cup \beta) \circ (\psi_1 \cup_{\eta} \psi_2)\circ s=(\phi_1 \cup_{\zeta} \phi_2)\circ s\]
and hence we have 
\[(\alpha \cup \beta) \circ (\psi_1 \cdotv \psi_2)|_{C(m)}=(\phi_1 \cdotv \phi_2)|_{C(m)}\]
Similarly for the right boundaries.
Thus the definition of the vertical composite 
\[ [(M_1, \phi_1)]\cdot [(M_2, \phi_2)]:=[(M_1\cdot M_2, \phi_1 \cdotv \phi_2)] \]
 is independent of the choice of representatives.


\subsubsection{Associativity for the vertical composition}
Let $[(M_1, \phi_1)]:\Sigma_{t_1}\Rightarrow \Sigma_{t_2}:\nstand{m}\to \nstand{n}$, $[(M_2, \phi_2)]:\Sigma_{t_2}\Rightarrow \Sigma_{t_3}:\nstand{m}\to \nstand{n}$, and $[(M_3, \phi_3)]:\Sigma_{t_3}\Rightarrow \Sigma_{t_4}:\nstand{m}\to \nstand{n}$ be 2-morphisms.
The pairs $[M_1]$ and $[M_2]$, $[M_2]$ and $[M_3]$ are vertically composable.
We show that the associativity holds: We show that 
\[\Bigl((([M_1]\cdot  [M_2])\cdot  [M_3]), \quad (\phi_1 \cdotv \phi_2)\cdotv \phi_3 \Bigr)=
\Bigl(([M_1]\cdot  ([M_2]\cdot  [M_3])), \quad \phi_1 \cdotv (\phi_2 \cdotv \phi_3) \Bigl) \]
The both sides equal to $M_1 \cdot M_2 \cdot M_3$ as a manifold.
Thus it suffices to show that the parametrization $(\phi_1 \cdotv \phi_2)\cdotv \phi_3$ is isotopic to the parametrization $\phi_1 \cdotv (\phi_2 \cdotv \phi_3)$ by Lemma \ref{lem:isotopy equivalence}.
Checking this on the top and bottom boundaries are again trivial.
Let us check the isotopy on the left boundary.
From the definition of vertical composition, we need to consider three copies of the cylinder $C(m)$.
Name them $C_i(m)$ for $i=1,2,3$ for each $M_i$.
Let $\zeta_1$ be the gluing map for the cylinders $C_1(m)$ and $C_2(m)$ and let $\zeta_2$ be the gluing map for the cylinders $C_2(m)$ and $C_3(m)$ as in (\ref{equ:zeta vertical gluing map}).
(Technically, $\zeta_i$ depends an order of gluing of three cylinders but it is straightforward to see that $\zeta_i$ is defined independently of the order.)
Now by definition $(\phi_1 \cdotv \phi_2)\cdotv \phi_3$ on the left cylinder $C(m)$ is equal to $[ (\phi_1 \cup_{\zeta_1} \phi_2)\circ s_1 \cup_{\zeta_2} \phi_3]\circ s_2 $, where $s_i$ is the stretching map corresponding to $\zeta_i$.
This is equal to $[\phi_1 \cup_{\zeta_1} \phi_2 \cup_{\zeta_2} \phi_3 ] \circ (s_1 \cup \id )\circ s_2$.
Now we see that the map $(s_1 \cup \id )\circ s_2$ is isotopic to the map $(\id \cup s_2)\circ s_1$.
(This is like the proof of associativity of fundamental groups.)
Hence we see that $(\phi_1 \cdotv \phi_2)\cdotv \phi_3$ is isotopic to $\phi_1 \cdotv (\phi_2 \cdotv \phi_3)$ on the left boundary.
Similarly for the right boundary.
Therefore the associativity follows.



\subsubsection{Units for vertical  composition}\label{subsubsec:unit for the vertical composition of Co}
From the above arguments, for each pair of objects $\nstand{m}$, $\nstand{n}$ of $\Co$, we have the semigroupoid (category without identity) $\Co(\nstand{m}, \nstand{n})$ whose objects are 1-morphisms from $\nstand{n}$ to $\nstand{m}$ of $\Co$ and whose morphisms are 2-morphisms between such 1-morphisms in $\Co$.
To make the semigroupoid $\Co(\nstand{m}, \nstand{n})$ a category, we need to specify the identity morphism for each object of $\Co(\nstand{m}, \nstand{n})$.
For each object $X$ of $\Co(\nstand{m}, \nstand{n})$ (a standard surface), we just use a formal unit $\id_{X}$, rather than construct the identity cobordism.
Thus the formal unit $\id_{X}$ should act as the identity morphism in the category $\Co(\nstand{m}, \nstand{n})$.

Remark: one reason to use formal units is that otherwise we need to construct a concrete cobordism with a parametrization.
The obvious candidate for the identity cobordism is the cylinder over the standard surface $\Sigma_{t}\times [0,1]$, where $t$ is the type with $L(t)=m$ and $R(t)=n$.
The bottom boundary $\Sigma_t\times \{0\}$ can be identified with the standard surface $\Sigma_t$ and the identity map can be used as a parametrization.
The top boundary $\Sigma_t \times \{1\}$ also can be identified with $\Sigma_t$ and $\mir:\Sigma_t^- \to \Sigma_t$ can be used as a parametrization.
The problem is to define parametrizations for the left and the right boundaries.
We need to construct parametrization homeomorphism from $C(m)$ and $C(n)$ to the side boundaries of $\Sigma_t \times [0,1]$, which are cylinders over the boundary circles of $\Sigma_t$.
However there is no canonical homeomorphism at hand.
Since it does not seem that this gives more insights in our theory we just avoid the burden by introducing the formal units.
On the other hand, there is no obstruction in the case when $m=n=0$ since there is no side boundaries.
Using the cylindrical neighborhood, we can prove the cylinder over a standard surface is in fact the identity.

\subsubsection{Horizontal composition}

Next, we define the horizontal composition of 2-morphisms.
Let $X=\nstand{l}$, $Y=\nstand{m}$ and $Z=\nstand{n}$ be objects of $\Co$.
Let $\Sigma_{t_i}: X \to Y$ and $\Sigma_{s_i}: Y \to Z$ be 1-morphisms for $i=1,2$.
Let $[M]: \Sigma_{t_1} \Rightarrow \Sigma_{t_2}: X \to Y$ and $[M']:\Sigma_{s_1} \Rightarrow \Sigma_{s_2}: Y \to Z$ be 2-morphisms.
We define the horizontal composite $[M]\circ [M']$ of $[M]$ and $[M']$ as follows.
The composite $[M]\circ [M']$ will be a 2-morphism from $\Sigma_{t_1\circ s_1}: X\to Z$ to $\Sigma_{t_2 \circ s_2}: X \to Z$.
Pick representatives $(M,\phi)$ and $(M', \phi') $ for $[M]$ and $[M']$, respectively.
Here $\phi: \Sigma(\phi) \to \partial M$ and $\phi': \Sigma(\phi') \to \partial M'$ are parametrizations of boundaries of decorated cobordisms $M$ and $M'$, respectively.
We glue $M$ and $M'$ by identifying $\rightb M$ and $\leftb M'$ via the homeomorphism $\phi'_L \circ \refl \circ \phi_R^{-1}: \rightb M \to \leftb M'$.
Since this homeomorphisms is the composite of three orientation reversing maps, this map is orientation reversing.
In the sequel, we omit writing the map $\refl$ to simplify expressions.
Denote the resulting manifold by
\[ M\circ M'=M\cup_{\phi'_L \circ \phi_R^{-1}}M'.\]

The next task it to construct a parametrization of $M\circ M'$ from $C_{l}\sqcup C_{n} \sqcup\Sigma_{t_1\circ s_1} \sqcup\Sigma_{t_2\circ s_2}^-$ and then the equivalence class of this pair will be the horizontal composite.
On the left and right boundaries, the parametrizations are just $\phi$ and $\phi'$, respectively.
We now define a parametrization homeomorphism from the standard surface $\Sigma_{t_1\circ t_2}$ to the bottom boundary of $M\circ M'$.
This is not a straightforward task because the topological gluing of standard surfaces are not a standard surface.
First let us write 
\[ g:=\phi'^{-1}_{B}  \circ \phi'_L \circ \phi_{R}^{-1}\circ \phi_B: \rightb \Sigma_{t_1} \to \leftb \Sigma_{s_1}.\]
Gluing via this homeomorphism we obtain the surface $\Sigma_{t_1}\cup_g \Sigma_{s_1}$.
Define $\Phi=\Phi(\phi_B,\phi'_B): \Sigma_{t_1} \cup_g \Sigma_{s_1} \to \bottomb (M_1 \circ M_2)$ by
\[\Phi(x)=\Phi(\phi_B,\phi'_B)(x)=
\begin{cases}
\phi_B(x) & \mbox{ if } x \in \Sigma_{t_1} \\
\phi'_B(x) & \mbox{ if } x\in \Sigma_{s_1}
\end{cases}
\]
This is well-defined since $\bottomb (M_1 \circ M_2)=\bottomb M_1 \cup_{\phi'_L \circ \phi_R^{-1}} \bottomb M'$.
Next, because the surface $\Sigma_{t_1}\cup_g \Sigma_{s_1}$ is not a standard surface, we define a homeomorphism $\Sigma_{t_1 \circ s_1} \to \Sigma_{t_1}\cup_g \Sigma_{s_1}$.
This homeomorphism will depend on several choices. 
However, two different choices give homeomorphisms that differ only by an isotopy.
The standard surfaces $\Sigma_{t_1}$ and $\Sigma_{s_1}$ are by definition sitting in $\R^3$.
There is a translation  map $\tau$ of $\R^3$ that maps $\rightb \Sigma_{t_1}$ to  $\leftb \Sigma_{s_1}$.
Now both $\Sigma_{t_1\circ s_1}$ and $\tau(\Sigma_{t_1})\cup \Sigma_{s_1}$ are in $\R^3$ and they are homeomorphic.
We fix a homeomorphism $h(t_1, s_1):\Sigma_{t_1\circ s_1 }\to \tau(\Sigma_{t_1})\cup \Sigma_{s_1} $ as follows.
Recall that the standard surface $\Sigma_t$ is obtained from the boundary of the standard handlebody $U_t$ in $\R^3$ with several disks removed.
There is an ambient isotopy $F: \R^3 \times [0,1] \to \R^3$ of handlebodies $U_{t_1 \circ s_1}$ and $\tau(U_{t_1})\cup U_{s_1}$ which maps the ribbon graph in one to the other and when $F$ is restricted on $\leftb \Sigma_{t_1 \circ s_1}\subset U_{t_1 \circ s_1}$ it is just a translation in the $x$-coordinate in $\R^3$
 and when $F$ is restricted to $\rightb \Sigma_{t_1 \circ s_1}\subset U_{t_1 \circ s_1}$, it is also just a translation but it might move different amount in $x$-direction.
Then we define $h(t_1, s_1)$ to be the restriction of $F$ to $\Sigma_{t_1 \circ s_1}$.
There is a canonical homeomorphism $f_{\tau}:\tau(\Sigma_{t_1}) \cup \Sigma_{s_1}  \to \Sigma_{t}\cup_{\tau} \Sigma_{s_1}  $.
Here the latter space is obtained by regarding $\tau$ as a homeomorphism from $\rightb \Sigma_{t_1}$ to $\leftb \Sigma_{s_1}$ and gluing $\Sigma_{t_1}$ and $\Sigma_{s_1}$ along $\tau$. 
Finally we need to choose a homeomorphism  $\Sigma_{t}\cup_{\tau} \Sigma_{s_1} \to \Sigma_{t}\cup_{g} \Sigma_{s_1} $, which seems to be the most arbitrary.
First we have the following result which follows from Lemma in Appendix III of \cite{Turaev10}.

\begin{lemma}\label{lem:Turaev Appendix III}
	Let $X=\rightb \Sigma_t$ and let $f:X \to X$ be a homeomorphism preserving the orientation and the base points $\{x_i\}$ in each component of $X$.
	Then there exists a homeomorphism $\Psi: \Sigma_t \to \Sigma_t$ satisfying the following.
	\begin{enumerate}
		\item The restriction $\Psi|_{X}=f$.
		\item The homeomorphism $\Psi$ is the identity on $(\Sigma_t \setminus \mathrm{int}(U))\cup (\{x_i\}\times [0,1])$, where $U=X \times [0,1]$ is a cylindrical collar neighborhood of $X=X\times \{0\}$ in $\Sigma_t$.
		\item The homeomorphism $\Psi$ carries $X \times \{t\} \subset U$ into $X \times \{t\}$ for all $t \in [0,1]$.
		
	\end{enumerate}   
	Any two such homeomorphisms $\Psi:\Sigma \to \Sigma$ are isotopic via an isotopy constant on $\partial \Sigma$.
\end{lemma}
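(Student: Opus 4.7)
The plan is to construct $\Psi$ by pushing an isotopy from $\mathrm{id}$ to $f$ through the collar coordinate of $U = X \times [0,1]$. Since $X = \rightb \Sigma_t$ is a disjoint union of based circles and $f$ is orientation-preserving and base-point-preserving, on each circle component $f$ is isotopic to the identity through a one-parameter family fixing the base point. This follows because the group of orientation-preserving self-homeomorphisms of $S^1$ fixing a chosen point is contractible: modeling $S^1 \setminus \{*\}$ as an open interval, such homeomorphisms become monotone self-maps of $[0,1]$ fixing the endpoints, and one can linearly interpolate with $\mathrm{id}$. So I would fix an isotopy $\{f_s\}_{s\in[0,1]}$ with $f_0 = \mathrm{id}$, $f_1 = f$, and $f_s(x_i) = x_i$ for every $s$ and every base point $x_i$, doing this one component at a time.

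Next I would set $\Psi$ to be the identity on $\Sigma_t \setminus \mathrm{int}(U)$ and define
\[ \Psi(x, s) \; := \; (f_{1-s}(x),\, s) \qquad \text{for } (x, s) \in X \times [0,1] = U. \]
Condition (1) follows from $\Psi(x, 0) = (f(x), 0)$; the formula reduces to the identity at $s = 1$, so it glues continuously to the identity on $\Sigma_t \setminus \mathrm{int}(U)$, giving the first half of (2); the second half of (2), namely identity along $\{x_i\} \times [0,1]$, is immediate from $f_s(x_i) = x_i$; and (3) is built into the formula since $\Psi$ preserves the $s$-coordinate.

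For the uniqueness clause, suppose $\Psi'$ is a second such extension. Then $\Psi^{-1}\Psi'$ is the identity off $U$, on $X\times\{0\}$, on $X\times\{1\}$, and on each $\{x_i\} \times [0,1]$, and it preserves every slice $X \times \{s\}$. Hence it is determined by a loop in the space of orientation-preserving, base-point-preserving self-homeomorphisms of $X$ based at $\mathrm{id}$; contractibility of that space makes the loop null-homotopic rel endpoints, and such a null-homotopy is precisely an isotopy from $\Psi$ to $\Psi'$ that is constant on $\partial \Sigma_t$. The main subtlety is keeping the base-point condition alive throughout the isotopy—this is the only refinement over the version in Appendix III of \cite{Turaev10}—but it is handled automatically by working inside the contractible base-point-fixing subgroup from the outset.
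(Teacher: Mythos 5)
Your proof is correct, and since the paper does not actually supply an argument for this lemma --- it merely states that the result ``follows from Lemma in Appendix III of \cite{Turaev10}'' --- your self-contained construction is a genuine addition rather than a restatement of the paper's proof. The collar formula $\Psi(x,s) = (f_{1-s}(x),\, s)$ is exactly the mechanism one expects behind Turaev's Appendix~III lemma, and your use of the contractibility of the base-point-fixing group of each circle (via the convexity of monotone self-homeomorphisms of $[0,1]$ fixing the endpoints) cleanly gives both the connecting isotopy needed for existence and the loop null-homotopy needed for uniqueness rel $\partial\Sigma_t$. Your observation at the end is also the right one: the only delta over Turaev's statement is the base-point constraint, and it is absorbed automatically by working in the subgroup $\mathrm{Homeo}^+(S^1, *)$ throughout. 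A minor remark: the uniqueness clause only asks for an isotopy constant on $\partial\Sigma_t$, not one through homeomorphisms fixing $\{x_i\}\times[0,1]$, so staying inside the base-point-fixing subgroup in the uniqueness argument is a sufficient (and convenient) but not strictly necessary restriction --- it strengthens rather than weakens the conclusion. It would be slightly cleaner to spell out the isotopy explicitly, say $\Theta_s(x,t) = (H(t,s)(x),\,t)$ on $U$ and identity elsewhere, where $H$ is the null-homotopy with $H(\cdot,0)=\gamma$, $H(\cdot,1)=\mathrm{id}$ and $H(0,\cdot)=H(1,\cdot)=\mathrm{id}$, but the logic as written is sound.
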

Now from two homeomorphisms $\tau$ and $g$ from $\rightb \Sigma_{t_1}$ to $\leftb \Sigma_{s_1}$ we obtain the self homeomorphism $f=\tau^{-1}\circ g$ of $X=\rightb \Sigma_{t_1}$.
Lemma \ref{lem:Turaev Appendix III} yields a homeomorphism $\Psi(f):\Sigma_{t_1} \to \Sigma_{t_1}$ that extends $f=\tau^{-1}\circ g$ and any two such homeomorphisms are isotopic.
We fix one such $\Psi(f)$.
Thus $\Psi(f)$ induces an homeomorphism $\Psi(g, \tau):\Sigma_{t}\cup_{\tau} \Sigma_{s_1} \to \Sigma_{t}\cup_{g} \Sigma_{s_1} $ and different choice of $\Phi(f)$ induces an isotopic homeomorphism $\Psi(g, \tau)$.
Then we define the parametrization homeomorphism $\phi_B\circ_h \phi'_B$  of $M \circ M'$ to be
\begin{equation}\label{equ:horizontal parametrizations}
\phi_B\circ_h \phi'_B:=\Phi(\phi_B, \phi'_B)\circ \Psi(g, \tau) \circ f_{\tau} \circ h(t_1, s_1): \Sigma_{t_1 \circ s_1} \to \bottomb (M \circ M').
\end{equation}
Similarly we obtain the top parametrization $\phi_T\circ_h \phi'_T:\Sigma_{t_2\circ s_2}^{-} \to \topb (M\circ M')$.

Now we obtained the pair $(M\circ M',\phi\circ_h\phi')$.
After choosing the representatives $M$ and $M'$, there are several choices we made to define the parametrization.
Namely, $h(t_1, s_1)$ and $\Psi(g, \tau)$ are defined up to isotopy fixing boundaries.
But this ambiguity does not affect the equivalence class by virtue of Lemma \ref{lem:isotopy equivalence}.
The next lemma shows that a different choice of representatives of a 2-morphism gives an equivalent pair.
\begin{lemma}
	Suppose that $(M,\phi)$ is equivalent to $(N, \psi)$ and $(M', \phi')$ is equivalent to $(N',\psi')$ then $(M\circ M, \phi\circ_h \phi')$ is equivalent to $(N\circ N', \psi\circ_h \psi')$.
\end{lemma}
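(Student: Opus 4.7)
The plan is to explicitly construct a $d$-homeomorphism $M\circ M' \to N\circ N'$ by gluing the equivalences and then verify it commutes with the horizontal composite parametrizations on all four boundary pieces. Let $\alpha:M\to N$ and $\beta:M'\to N'$ be the $d$-homeomorphisms from the given equivalences, so that $\alpha\circ\phi_* = \psi_*$ and $\beta\circ\phi'_* = \psi'_*$ for $*=B,T,L,R$. I first want to define $\alpha\cup\beta : M\circ M' \to N\circ N'$ by $\alpha$ on $M$ and $\beta$ on $M'$, and check it descends to the quotient. A point $x\in\rightb M$ is identified with $\phi'_L\circ\phi_R^{-1}(x)\in \leftb M'$, so I need $\beta\circ\phi'_L\circ\phi_R^{-1}=\psi'_L\circ\psi_R^{-1}\circ\alpha$ on $\rightb M$. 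This follows from $\alpha\circ\phi_R=\psi_R$ and $\beta\circ\phi'_L=\psi'_L$ by a one-line substitution.

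Next I would observe the crucial invariance: writing $g:=(\phi'_B)^{-1}\circ\phi'_L\circ\phi_R^{-1}\circ\phi_B$ and $g':=(\psi'_B)^{-1}\circ\psi'_L\circ\psi_R^{-1}\circ\psi_B$, direct substitution of the equivalence relations gives $g'=g$. Therefore in the construction of both horizontal parametrizations, the translation $\tau$, the canonical map $f_\tau$, the Turaev-style extension $\Psi(g,\tau)$, and the isotopy-class representative $h(t_1,s_1)$ may be chosen identically for the two sides; by Lemma~\ref{lem:isotopy equivalence} different choices only alter the pair within its equivalence class.

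With these identifications, I check compatibility boundary by boundary. On the left and right, the parametrizations are simply $\phi_L$ and $\phi'_R$ (resp.\ $\psi_L$ and $\psi'_R$), so $(\alpha\cup\beta)\circ\phi_L = \alpha\circ\phi_L=\psi_L$ and similarly on the right. On the bottom, the definition of the gluing map $\Phi$ in \eqref{equ:horizontal parametrizations} together with $\psi_B=\alpha\circ\phi_B$ and $\psi'_B=\beta\circ\phi'_B$ yields
\[
\Phi(\psi_B,\psi'_B) = (\alpha\cup\beta)\circ \Phi(\phi_B,\phi'_B)
\]
on $\Sigma_{t_1}\cup_g \Sigma_{s_1}$, and composing on the right with the common factor $\Psi(g,\tau)\circ f_\tau\circ h(t_1,s_1)$ gives $(\psi_B\circ_h\psi'_B)=(\alpha\cup\beta)\circ(\phi_B\circ_h\phi'_B)$. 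The top boundary argument is identical after replacing $t_i$ by $s_i$ and applying $\mir$.

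The main obstacle, and really the only subtle point, is that the horizontal parametrization involves the choices of $h(t_1,s_1)$ and $\Psi(g,\tau)$ which are only determined up to isotopy; without the invariance $g=g'$ one could not hope for an on-the-nose equality $(\psi\circ_h\psi')=(\alpha\cup\beta)\circ(\phi\circ_h\phi')$. The resolution is to fix one choice once and for all (depending only on $g$, $\tau$, $t_1$, $s_1$) and invoke Lemma~\ref{lem:isotopy equivalence} to absorb any remaining isotopy-level discrepancy into the equivalence class. This completes the verification and proves the lemma.
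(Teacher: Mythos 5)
Your proposal is correct and follows essentially the same route as the paper's proof: construct $\alpha\cup\beta$ on the glued manifold, verify well-definedness on the seam, establish the key identity $g=g'$ (so the same $\Psi(g,\tau)\circ f_\tau\circ h(t_1,s_1)$ factor can be used for both parametrizations), and then check commutativity with the horizontal parametrizations boundary-by-boundary. The only differences are cosmetic — you reverse the direction of $\alpha,\beta$ and present the diagram chases in prose — and your explicit remark that the $g=g'$ invariance is what allows fixing a common choice of $h$ and $\Psi$ (with Lemma~\ref{lem:isotopy equivalence} absorbing any residual ambiguity) correctly identifies the same point the paper addresses in the remark preceding the lemma.
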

\begin{proof}

	By the definition of the equivalence, there are homeomorphisms $\alpha:N \to M$ and $\beta:N'\to M'$ such that $\phi= \alpha \circ \psi$ and $\phi'=\beta \circ \psi'$.
	They induce a homeomorphism $\alpha \cup \beta : N\circ N' \to M \circ M'$.
	This homeomorphism is well-defined since on the common boundary we have the following commutative diagram:

	\begin{center}

		\begin{tikzpicture}
		\matrix (m) [matrix of nodes, column sep=3em, row sep=1.5em]
		{$C(m)$ &  &  & $C(m)$ \\
			& $N$ & $N'$ &\\
			& $M$ & $M'$ &\\
			$C(m)$ &  &  & $C(m)$\\};
		
		\path[->, font=\scriptsize]
		(m-1-1) edge node[auto] {$=$} (m-1-4);
		
		\path[->, font=\scriptsize]
		(m-1-1) edge node[auto] {$=$} (m-4-1);
		
		\path[->, font=\scriptsize]
		(m-1-1) edge node[auto] {$\psi_R$} (m-2-2);
		
		\path[->, font=\scriptsize]
		(m-2-2) edge node[auto] {$\psi'_L \circ \psi^{-1}_R$} (m-2-3);
		
		\path[->, font=\scriptsize]
		(m-2-3) edge node[auto] {$\beta$} (m-3-3);
		
		\path[->, font=\scriptsize]
		(m-2-2) edge node[auto] {$\alpha$} (m-3-2);
		
		\path[->, font=\scriptsize]
		(m-3-2) edge node[auto] {$\phi'_L \circ \phi^{-1}_R$} (m-3-3);
		
		\path[->, font=\scriptsize]
		(m-1-4) edge node[auto] {$=$} (m-4-4);
		
		\path[->, font=\scriptsize]
		(m-4-1) edge node[auto] {$=$} (m-4-4);
		
		\path[->, font=\scriptsize]
		(m-4-1) edge node[auto] {$\phi_R$} (m-3-2);
		
		\path[->, font=\scriptsize]
		(m-4-4) edge node[auto] {$\phi_L'$} (m-3-3);
		
		\path[->, font=\scriptsize]
		(m-1-4) edge node[auto] {$\psi'_L$} (m-2-3);
		
		\end{tikzpicture}
	\end{center}

	We need to show that  
	\begin{equation}\label{equ:horizontal parametrization commutes}
	\phi \circ_{\text{h}} \phi'=(\alpha \cup \beta)\circ (\psi \circ_{\text{h}} \psi').
	\end{equation}
	The right and the left boundary parts just follow from the definition of $\alpha$ and $\beta$.. 
	Let us check this equality on the bottom boundary.
	By the remark before the lemma, we can choose the parametrizations as follows.
\[\phi_B\circ_h \phi'_B:=\Phi(\phi_B, \phi'_B)\circ \Psi(g, \tau) \circ f_{\tau} \circ h(t_1, s_1): \Sigma_{t_1 \circ s_1} \to \bottomb (M \circ M')\]
and
\[\psi_B\circ_h \psi'_B:=\Phi(\psi_B, \psi'_B)\circ \Psi(g', \tau) \circ f_{\tau} \circ h(t_1, s_1): \Sigma_{t_1 \circ s_1} \to \bottomb (N \circ N'),\]
	where
	\[ g:=\phi'^{-1}_{B}  \circ \phi'_L \circ \phi_{R}^{-1}\circ \phi_B: \rightb \Sigma_{t_1} \to \leftb \Sigma_{s_1}\] 
	and
	\[ g':=\psi'^{-1}_{B}  \circ \psi'_L \circ \psi_{R}^{-1}\circ \psi_B: \rightb \Sigma_{t_1} \to \leftb \Sigma_{s_1}.\]
Because we have homeomorphism commuting with parametrizations, we have the following commutative diagram and we have in fact $g=g'$

	\begin{center}

	\begin{tikzpicture}

\matrix (m) [matrix of nodes, column sep=3em, row sep=2em]
{  

 &  $M$ &    & $M'$ & \\
 $\Sigma_{t}$ &  & $C(m)$& & $ \Sigma_{s_1}$ \\
 &  $N$ & & $N'$ & \\};

\path[->, font=\scriptsize]
(m-2-1) edge  node[auto] {$\phi_B$} (m-1-2);

\path[->, font=\scriptsize]
(m-2-1) edge node[left]{$\psi_B$} (m-3-2);

\path[->, font=\scriptsize]
(m-2-3) edge node[auto] {$\phi_R$}
 (m-1-2);

\path[->, font=\scriptsize]
(m-2-3) edge node[below] {$\psi_R$} (m-3-2);

\path[->, font=\scriptsize]
(m-2-3) edge node[auto] {$\phi'_L$}
 (m-1-4);

\path[->, font=\scriptsize]
(m-2-3) edge node[auto] {$\psi'_L$}
 (m-3-4);

\path[->, font=\scriptsize]
(m-2-5) edge node[auto] {$\phi'_B$}
 (m-1-4);

\path[->, font=\scriptsize]
(m-2-5) edge node[auto] {$\psi'_B$}
 (m-3-4);

\path[->, font=\scriptsize]
(m-3-2) edge node[auto] {$\alpha$}
 (m-1-2);

\path[->, font=\scriptsize]
(m-3-4) edge node[auto] {$\beta$}
 (m-1-4);

\end{tikzpicture}
\end{center}

We also have the following commutative diagram and the equality (\ref{equ:horizontal parametrization commutes}) holds on the bottom boundary.
	
\begin{center}	
\begin{tikzpicture}

\matrix (m) [matrix of nodes, column sep=6em, row sep=2em]
{  &  & $\bottomb(M\circ M')$ \\
 $\Sigma_{t\circ t'}$ &  $\Sigma_{t} \cup_{g} \Sigma_{t'}$    \\
 &  & $\bottomb (N \circ N')$ \\};

\path[->, font=\scriptsize]
(m-2-1) edge node[auto] {$\Psi(g, \tau) f_{\tau}h(t_1, s_1)$}
 (m-2-2);
 
 \path[->, font=\scriptsize]
(m-2-2) edge node[auto] {$\Phi(\phi_B, \phi_{B'})$}
 (m-1-3);
 
 \path[->, font=\scriptsize]
(m-2-2) edge node[auto] {$\Phi(\psi_B, \psi_{B'})$}
 (m-3-3);
 
 \path[->, font=\scriptsize]
(m-1-3) edge node[auto] {$\alpha \cup \beta $}
 (m-3-3);

\end{tikzpicture}
\end{center}
Similarly for the top boundary.
Hence we have $\phi \circ_{\text{h}} \phi'=(\alpha \cup \beta )\circ (\psi \circ_{\text{h}} \psi')$ and conclude that 
	$(M\circ M, \phi\circ_h \phi')$ is equivalent to $(N\circ N', \psi\circ_h \psi')$.

%
%
%
%
%
%
%
%
%
%
%
%
	
	\end{proof}

%
%

\subsubsection{Associativity for horizontal composition}

Let $(M, \phi)$, $(M', \phi')$, and $(M'', \phi'')$ be representative of 2-morphisms of $\Co$ such that $(M, \phi)$ and  $(M', \phi')$, $(M', \phi')$ and $(M'', \phi'')$ are horizontally composable.
We show that horizontal composition of $Co$ is associative.
It suffices to show that the map $(\phi\circh \phi') \circh \phi''$ is isotopic to $\phi \circh (\phi' \circh \phi'')$.
We check this on the bottom part.

Recall the definition of horizontal composition of parametrizations from (\ref{equ:horizontal parametrizations}).
For the sake of simplicity, we use the letter $\tau$ for translations in $\R^3$ and we 
denote by $\bar h$ the composite of homeomorphism $f_{\tau}\circ h(t, s): \Sigma_{t\circ s} \to \Sigma_{t} \cup_{\tau} \Sigma_{s}$. 
Thus maps $\tau$ and $\bar h$ should be understood from the context.
Let $g_1$ and $g_2$ be the homeomorphism defined by the following commutative diagram.

	\begin{center}

	\begin{tikzpicture}

\matrix (m) [matrix of nodes, column sep=3em, row sep=1em]
{  
$\rightb \Sigma_{t}$ &  & $\leftb \Sigma_{t'}, \rightb \Sigma_{t'}$& & $ \leftb \Sigma_{t''}$ \\
 &  $C(m)$ &    & $C(n)$ & \\
 $\bottomb M$ &  & $\bottomb M'$& & $ \bottomb M''$ \\
};

\path[->, font=\scriptsize]
(m-1-1) edge  node[auto] {$g_1$} (m-1-3);

\path[->, font=\scriptsize]
(m-1-3) edge node[auto]{$g_2$} (m-1-5);

\path[->, font=\scriptsize]
(m-1-1) edge node[auto] {$\phi_B$}
 (m-3-1);

\path[->, font=\scriptsize]
(m-2-2) edge node[auto] {$\phi_R$}
 (m-3-1);

\path[->, font=\scriptsize]
(m-2-2) edge node[auto] {$\phi'_L$}
 (m-3-3);

\path[->, font=\scriptsize]
(m-2-4) edge node[auto] {$\phi'_R$}
 (m-3-3);

\path[->, font=\scriptsize]
(m-2-4) edge node[auto] {$\phi''_L$}
 (m-3-5);

\path[->, font=\scriptsize]
(m-1-5) edge node[auto] {$\phi''_B$}
 (m-3-5);

\path[->, font=\scriptsize]
(m-1-3) edge node[auto] {$\phi'_B$}
 (m-3-3);

\end{tikzpicture}
\end{center}
Thus $g_1$ and $g_2$ are gluing homeomorphism of standard surfaces induced by parametrizations.
To calculate $(\phi\circh \phi') \circh \phi''$ and $\phi \circh (\phi' \circh \phi'')$ , we also need  the following gluing homeomorphisms $g_3$ and $g_4$, respectively.
\begin{center}
	\begin{tikzpicture}
\matrix (m) [matrix of nodes, column sep=7em, row sep=1em]
{  
$\rightb \Sigma_{t}$ &  & $\leftb \Sigma_{t'\circ t''}$\\
 &  $C(m)$ &\\
 $\bottomb M$ &  & $\bottomb (M'\circ M'') $ \\
};

\path[->, font=\scriptsize]
(m-1-1) edge  node[auto] {$g_3$} (m-1-3);

\path[->, font=\scriptsize]
(m-1-1) edge node[auto] {$\phi_B$}
 (m-3-1);

\path[->, font=\scriptsize]
(m-2-2) edge node[auto] {$\phi_R$}
 (m-3-1);

\path[->, font=\scriptsize]
(m-2-2) edge node[auto] {$(\phi' \circh \phi'')_L=\phi'_L$}
 (m-3-3);

\path[->, font=\scriptsize]
(m-1-3) edge node[auto] {$(\phi' \circh \phi'')_B$}
 (m-3-3);

\end{tikzpicture}
\end{center}

\begin{center}

	\begin{tikzpicture}

\matrix (m) [matrix of nodes, column sep=7em, row sep=1em]
{  
$\rightb (\Sigma_{t\circ t'})$ &  & $\leftb \Sigma_{ t''}$\\
 &  $C(n)$ &     \\
 $\bottomb (M\circ M') $ &  & $\bottomb  M'' $ \\
};

\path[->, font=\scriptsize]
(m-1-1) edge  node[auto] {$g_4$} (m-1-3);

\path[->, font=\scriptsize]
(m-1-1) edge node[auto] {$(\phi\circh \phi')_B$}
 (m-3-1);

\path[->, font=\scriptsize]
(m-2-2) edge node[auto] {$(\phi\circh \phi')_R=\phi_R$}
 (m-3-1);

\path[->, font=\scriptsize]
(m-2-2) edge node[auto] {$ \phi''_L$}
 (m-3-3);

\path[->, font=\scriptsize]
(m-1-3) edge node[auto] {$ \phi''_B$}
 (m-3-3);

\end{tikzpicture}
\end{center}
Let $\Psi_i:=\Psi(g_i, \tau)$.
Note that since $(\phi' \circh \phi'')_B |_{\leftb (\Sigma_{t' \circ t''})}=\phi'_B \circ \Psi_2\circ \bar h |_{\leftb(\Sigma_{t'\circ t''})}$, we have $\Psi_2 \circ \bar h \circ g_3=g_1$.
Moreover $\Psi_2 \circ \bar h$ is identity on the boundary $\leftb \Sigma_{t' \circ t''}$, thus in fact we have $g_1=g_3$, and hence $\Psi_1=\Psi_3$.
Also since $(\phi \circh \phi')_B|_{\rightb (\Sigma_{t \circ t'})}=\phi'_B \circ \Psi_1\circ \bar h |_{\rightb(\Sigma_{t \circ t'})}$, we have $g_2\circ \Psi_1 \circ \bar h =g_4$.
Deciphering the definition of maps we have the following diagram, where the left path is a parametrization $\phi_B \circh (\phi'_B \circh \phi''_B)$ and the right path is the parametrization $(\phi_B \circh \phi'_B)\circh \phi''_B$.

	\begin{center}

	\begin{tikzpicture}

\matrix (m) [matrix of nodes, column sep=2em, row sep=1em]
{  
   & $\Sigma_{t\circ t' \circ t''}$&   \\
   $\Sigma_t \cup_{\tau} \Sigma_{t' \circ t''}$ &    & $\Sigma_{t \circ t'} \cup_{\tau} \Sigma_{t''}$  \\
 $\Sigma_t \cup_{g_3} \Sigma_{t' \circ t''}$ &  & $\Sigma_{t \circ t'} \cup_{g_4} \Sigma_{t''} $ \\
  $\Sigma_t \cup_{\bar h g_3} (\Sigma_{t'} \cup_{\tau} \Sigma_{t''})$ &  &$(\Sigma_{t} \cup_{\tau} \Sigma_{t'}) \cup_{g_4 \bar{h}^{-1}} \Sigma_{t''} $ \\
 $\Sigma_t \cup_{\Psi_2 \bar h g_3} (\Sigma_{t'} \cup_{g_2} \Sigma_{t''})$ & & $(\Sigma_{t} \cup_{g_1} \Sigma_{t'}) \cup_{g_4 \bar{h}^{-1} \Psi^{-1}_1} \Sigma_{t''} $  \\
 &$\bottomb (M\circ M' \circ M'')$ &  \\
};

\path[->, font=\scriptsize]
(m-1-2) edge  node[above] {$\bar h$} (m-2-1);

\path[->, font=\scriptsize]
(m-1-2) edge node[auto]{$\bar h$} (m-2-3);

\path[->, font=\scriptsize]
(m-2-1) edge node[auto] {$\Psi_3$} (m-3-1);

\path[->, font=\scriptsize]
(m-3-1) edge node[auto]{$\bar h$} (m-4-1);

\path[->, font=\scriptsize]
(m-4-1) edge node[auto]{$\Psi_2$} (m-5-1);

\path[->, font=\scriptsize]
(m-2-3) edge node[auto] {$\Psi_4$} (m-3-3);

\path[->, font=\scriptsize]
(m-3-3) edge node[auto]{$\bar h$} (m-4-3);

\path[->, font=\scriptsize]
(m-4-3) edge node[auto]{$\Psi_1$} (m-5-3);

\path[->, font=\scriptsize]
(m-5-1) edge  node[auto] {$\phi_B  \cup \phi'_B \cup \phi''_B$} (m-6-2);

\path[->, font=\scriptsize]
(m-5-3) edge node[auto]{$\phi_B  \cup \phi'_B \cup \phi''_B$} (m-6-2);

\end{tikzpicture}
\end{center}
On the left path, we have $\bar h \circ \Psi_3=\Psi_3\circ  \bar h$ since $\Psi_3$ changes only $\Sigma_t$ part and $\bar h$ changes only the $\Sigma_{t'\circ t''}$ part.
On the right path, $\Psi_4$ and $\bar h$ does not commute.
This diagram can be also written as follows.
Let $\tilde \Psi_4:=\bar h \Psi_4 \bar{h}^{-1}$. 
Then we have $\bar h \circ \Psi_4=\tilde \Psi_4 \circ \bar h$ and thus we have the following diagram.

\begin{center}

	\begin{tikzpicture}

\matrix (m) [matrix of nodes, column sep=2em, row sep=1em]
{  
   & $\Sigma_{t\circ t' \circ t''}$&   \\
   $\Sigma_t \cup_{\tau} \Sigma_{t' \circ t''}$ &    & $\Sigma_{t \circ t'} \cup_{\tau} \Sigma_{t''}$  \\
 $\Sigma_t \cup_{\bar h \tau} (\Sigma_{t'} \cup_{\tau} \Sigma_{t''})$ &  & $(\Sigma_{t} \cup_{\tau} \Sigma_{t'}) \cup_{\tau} \Sigma_{t''} $ \\
  $\Sigma_t \cup_{\bar h g_3} (\Sigma_{t'} \cup_{\tau} \Sigma_{t''})$ &  &$(\Sigma_{t} \cup_{\tau} \Sigma_{t'}) \cup_{g_4 \bar{h}^{-1}} \Sigma_{t''} $ \\
 $\Sigma_t \cup_{\Psi_2 \bar h g_3} (\Sigma_{t'} \cup_{g_2} \Sigma_{t''})$ & & $(\Sigma_{t} \cup_{g_1} \Sigma_{t'}) \cup_{g_4 \bar{h}^{-1} \Psi^{-1}_1} \Sigma_{t''} $  \\
 &$\bottomb (M\circ M' \circ M'')$ & \\
};

\path[->, font=\scriptsize]
(m-1-2) edge  node[above] {$\bar h$} (m-2-1);

\path[->, font=\scriptsize]
(m-1-2) edge node[auto]{$\bar h$} (m-2-3);

\path[->, font=\scriptsize]
(m-2-1) edge node[auto] {$\bar h$} (m-3-1);

\path[->, font=\scriptsize]
(m-3-1) edge node[auto]{$\Psi_3$} (m-4-1);

\path[->, font=\scriptsize]
(m-4-1) edge node[auto]{$\Psi_2$} (m-5-1);

\path[->, font=\scriptsize]
(m-2-3) edge node[auto] {$\bar h$} (m-3-3);

\path[->, font=\scriptsize]
(m-3-3) edge node[auto]{$\tilde \Psi_4$} (m-4-3);

\path[->, font=\scriptsize]
(m-4-3) edge node[auto]{$\Psi_1$} (m-5-3);

\path[->, font=\scriptsize]
(m-5-1) edge  node[auto] {$\phi_B \cup \phi'_B \cup \phi''_B$} (m-6-2);

\path[->, font=\scriptsize]
(m-5-3) edge node[auto]{$\phi_B  \cup \phi'_B \cup \phi''_B$} (m-6-2);

\end{tikzpicture}
\end{center}
Investigating the gluing maps, we obtain the following diagram.

\begin{center}

	\begin{tikzpicture}

\matrix (m) [matrix of nodes, column sep=2em, row sep=1em]
{  
   & $\Sigma_{t\circ t' \circ t''}$&   \\
   $\Sigma_t \cup_{\tau} \Sigma_{t' \circ t''}$ &    & $\Sigma_{t \circ t'} \cup_{\tau} \Sigma_{t''}$  \\
 $\Sigma_t \cup_{ \tau} (\Sigma_{t'} \cup_{\tau} \Sigma_{t''})$ &  & $(\Sigma_{t} \cup_{\tau} \Sigma_{t'}) \cup_{\tau} \Sigma_{t''} $ \\
  $\Sigma_t \cup_{\bar h g_3} (\Sigma_{t'} \cup_{\tau} \Sigma_{t''})$ &  &$(\Sigma_{t} \cup_{\tau} \Sigma_{t'}) \cup_{g_4 \bar{h}^{-1}} \Sigma_{t''} $ \\
 $\Sigma_t \cup_{g_1} (\Sigma_{t'} \cup_{g_2} \Sigma_{t''})$ & & $(\Sigma_{t} \cup_{g_1} \Sigma_{t'}) \cup_{g_2} \Sigma_{t''} $  \\
 &$\bottomb (M\circ M' \circ M'')$ &  \\
};

\path[->, font=\scriptsize]
(m-1-2) edge  node[above] {$\bar h$} (m-2-1);

\path[->, font=\scriptsize]
(m-1-2) edge node[auto]{$\bar h$} (m-2-3);

\path[->, font=\scriptsize]
(m-3-1) edge  node[above] {$=$} (m-3-3);

\path[->, font=\scriptsize]
(m-2-1) edge node[auto] {$\bar h$} (m-3-1);

\path[->, font=\scriptsize]
(m-3-1) edge node[auto]{$\Psi_3$} (m-4-1);

\path[->, font=\scriptsize]
(m-4-1) edge node[auto]{$\Psi_2$} (m-5-1);

\path[->, font=\scriptsize]
(m-2-3) edge node[auto] {$\bar h$} (m-3-3);

\path[->, font=\scriptsize]
(m-3-3) edge node[auto]{$\tilde \Psi_4$} (m-4-3);

\path[->, font=\scriptsize]
(m-4-3) edge node[auto]{$\Psi_1$} (m-5-3);

\path[->, font=\scriptsize]
(m-5-1) edge  node[auto] {$\phi_B \cup \phi'_B \cup \phi''_B$} (m-6-2);

\path[->, font=\scriptsize]
(m-5-3) edge node[auto]{$\phi_B \cup \phi'_B \cup \phi''_B$} (m-6-2);

\end{tikzpicture}
\end{center}
Here the top pentagon is commutative up to isotopy: this is again similar to the proof of associativity of the fundamental group.
The bottom heptagon is also commutative up to isotopy. 
To see this, first note that $\tilde \Psi_4$ is identity on $\Sigma_t \subset \Sigma_t \cup_{\tau} \Sigma_{t'}$ since $\Psi_4$ is identity outside of a collar neighborhood of $\rightb \Sigma_{t \circ t'} \subset \bar {h}^{-1} (\Sigma_{t'})$.
Thus the restriction of $\bar \Psi_4$  on $\Sigma_{t'}$ part is isotopic to $\Psi_2$ by Lemma \ref{lem:Turaev Appendix III} since on the boundary $\rightb \Sigma_{t'}$ they agree.
Since $\Psi_1$ and $\Psi_2$ commute, the heptagon is commutative up to isotopy.
By Lemma \ref{lem:isotopy equivalence}  the associativity on the level of classses holds.

\subsubsection{Units for horizontal composition}

As in the case of vertical composition, we use formal units for horizontal composition.
This means that for each object $\nstand{n}$ of $\Co$, we add the formal identity object $\id_n$ to $\Co(\nstand{n}, \nstand{n})$ and we also add the formal identity 2-morphism $\id_{\id_n}$ between the object $\id_n$.
These formal identities act as identity for the horizontal composition.

\subsubsection{Interchange law}
We check the interchange law.
For four 2-morphisms 
\begin{align*}
&[(M_1, \phi_1)]: \Sigma_{t_1} \Rightarrow \Sigma_{t_2}: \nstand{l} \to \nstand{m},  \qquad &[(M_2, \phi_2)]: \Sigma_{t_2} \Rightarrow \Sigma_{t_3}: \nstand{l} \to \nstand{m},\\
&[(M_1', \psi_1)]: \Sigma_{s_1} \Rightarrow \Sigma_{s_2}: \nstand{m} \to \nstand{n}, \qquad &[(M_2', \psi_2)]: \Sigma_{s_2} \Rightarrow \Sigma_{s_3}: \nstand{m} \to \nstand{n},
\end{align*}

\begin{center}
\begin{tikzcd}
  \nstand{l} \arrow[bend left=50]{r}[name=U,below]{}{\Sigma_{t_1}}
     \arrow{r}[name=M,below]{}{\Sigma_{t_2}}
    \arrow[bend right=50]{r}[name=D]{}{\Sigma_{t_3}}
&\nstand{m}
      \arrow[bend left=50]{r}[name=U',below]{}{\Sigma_{s_1}}
     \arrow{r}[name=M',below]{}{\Sigma_{s_2}}
    \arrow[bend right=50]{r}[name=D']{}{\Sigma_{s_3}}
& \nstand{n}     
\end{tikzcd}
\end{center}
the interchange law says that the following equality holds;
\[ ([M_1]\circ[M_1'])\cdot ([M_2] \circ [M_2'])=( [M_1]\cdot [M_2]) \circ ([M_1'] \cdot [M_2']) \]
as a 2-morphism $\Sigma_{t_1\circ s_1}\Rightarrow \Sigma_{t_3\circ s_3}: \nstand{l}\to \nstand{n}$.

As a manifold both sides are the same.
Hence we only need to check whether
\[(\phi_1\circ_{\text{h}} \psi_1)\cdot_{\text{v}} (\phi_2 \circ_{\text{h}}\psi_2)  =(\phi_1 \cdot_{\text{v}} \phi_2)\circ_{\text{h}} (\psi_1\cdot_{\text{v}} \psi_2). \]
This equality is true because that horizontal composition does not change the parametrizations on the side boundaries and vertical composition does not change the parametrizations on the top and the bottom boundaries.

\section{A bicategory of the Kapranov-Voevodsky 2-vector spaces}\label{sec:A 2-category of the Kapranov-Voevodsky 2-vector spaces}
The target bicategory of our extended TQFT will be the Kapranov-Voevodsky (KV) 2-vector spaces.
The reason that we chose the KV 2-vector spaces as a target algebraic bicategory is that it is a natural extension of the usual category of vector spaces and the calculations are very explicit.

We recall the relevant definitions.
\begin{Definition}
\begin{enumerate}
Let $K$ be a commutative ring.
\item
A \textit{2-matrix} is an $m\times n$ matrix such that the $(i,j)$-component is a projective module over $K$.

\item
A \textit{2-homomorphism} from an $m\times n$ 2-matrix $V$ to an $m \times n$ 2-matrix $W$ is an $m\times n$ matrix of $K$-homomorphisms.
In other words, the $(i, j)$-components of a matrix of homomorphisms is a $K$-homomorphism from $V_{ij}$ to $W_{ij}$.
\item Two 2-matrices $V$ and $W$ are said to be \textit{isomorphic} if there is a 2-homomorphism $T$ from $V$ to $W$ such that each entry of $T$ is an isomorphism.
\end{enumerate}
\end{Definition}

\begin{Definition}
The \textit{Kapranov-Voevodsky 2-vector spaces}, $\KV$, consists of the followings.
\begin{enumerate}
\item The \textit{objects} of $\KV$ are symbols $\{n\}$ for non-negative integers $n$.
\item A \textit{1-morphism} from $\{m\}$ to $\{n\}$ is an $(m\times n)$ 2-matrix $V$. We denote a 1-morphism from $\{m\}$ to $\{n\}$ by  $V: \{m\}\to \{n\}$.
\item A \textit{2-morphism} from a 1-morphism $V:\{m\}\to \{n\}$ to a 1-morphism $W:\{m\}\to \{n\}$ is a 2-homomorphism from $V$ to $W$.
\end{enumerate}

%

Usual matrix calculations extend to this setting if we replace a multiplication by $\otimes$ and an addition by $\oplus$. 
Horizontal composition is given by matrix multiplication and vertical composition is given by the composition of each entries.
With these composition operations, the Kapranov-Voevodsky 2-vector spaces $\KV$ is indeed a bicategory.
(For details, see \cite{KV1994} on page 226.)

\end{Definition}

\section{Review and Modification of the Reshetikhin-Turaev TQFT}\label{sec:Review and Modification of the Reshetikhin-Turaev TQFT}
Our construction of a projective pseudo 2-functor from the 2-category $\Co$ to the Kapranov-Voevodsky 2-vector spaces $\KV$ requires the original Reshetikhin-Turaev theory.
In this section we review some of the relevant part of the RT theory.

\subsection{Operator Invariant}\label{subsec:operator invariant}
One of the key ingredient to construct the RT TQFT is so called the ``operator invariant''.
Let $\V$ be a modular category. (More generally, the operator invariant exists for a strict ribbon category $\V$.)
Define the category $\Rib$ of the ribbon graphs over $\V$ as follows.
The objects of $\Rib$ are finite sequences of the form $((V_1, \epsilon_1), \dots, (V_m, \epsilon_m))$, where $V_i$ is an object of the modular category $\V$ and $\epsilon_i$ is either $\pm1$ for $i=1, \dots, m$.
A morphism $\eta \to \eta'$ in $\Rib$ is an isotopy type of a $v$-colored ribbon graph over $\V$ such that $\eta$ (resp. $\eta'$) is the sequence of colors and directions of those bands which hit the bottom (resp. top) boundary intervals.
The downward direction near the corresponding boundary corresponds $\epsilon=1$, and $\epsilon=-1$ corresponds to the band directed up.
For example, the ribbon graph drawn in Figure \ref{fig:ribbon graph} represents a morphism from $((V_1, -1), (V_2,1), (V_3, 1), (U, 1))$ to $((V_2, -1), (V_1, -1), (V_3, 1), (V, 1))$.
\begin{figure}[h]
\center
\includegraphics[width=2.6in]{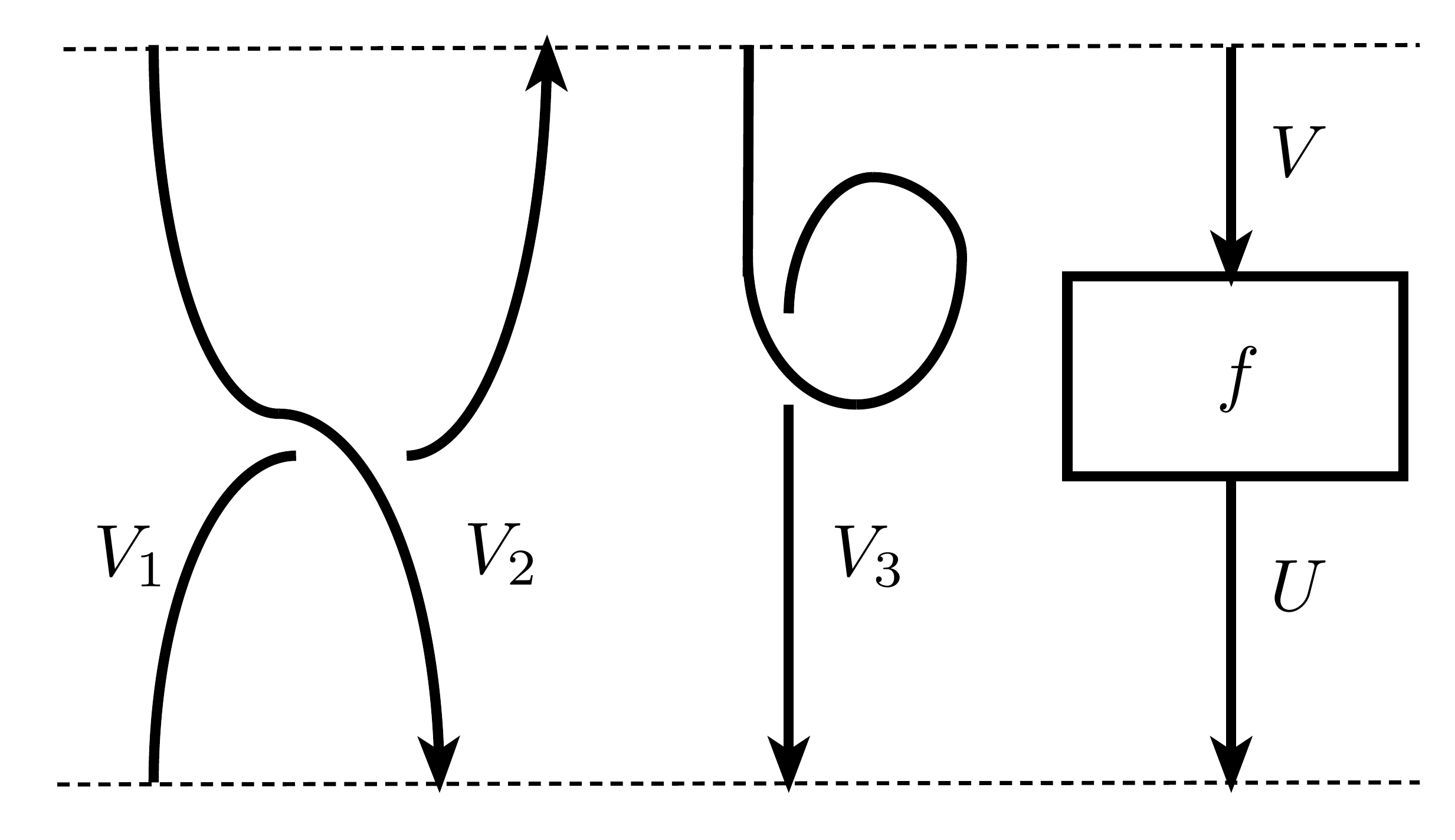}
\caption{$v$-colored Ribbon graph}
\label{fig:ribbon graph}
\end{figure}
The composition of morphisms of $\Rib$ is given by concatenation of ribbon graphs.
The juxtaposition of ribbon graphs provides $\Rib$ with the structure of a monoidal category.
Then it is a fact that there is a unique monoidal functor $F=F_{\V}: \Rib \to \V$ satisfying the following conditions:
\begin{enumerate}
\item $F$ transforms any object $(V, +1)$ into $V$ and any object $(V, -1)$ into $V^*$
\item $F$ maps a crossing ribbon graph to a braiding, a twist ribbon graph to a twist, a cup like band and a cap like  band to a corresponding duality map in $V$.
(The X-shape ribbon in Figure \ref{fig:ribbon graph} is one of crossing ribbons and the once-curled ribbon in the middle is one of twist ribbons. The others obtained by changing directions and crossings.)
\item For each elementary $v$-colored ribbon graph $\Gamma$, we have $F(\Gamma)=f$, where $f$ is the color of the only coupon of $\Gamma$.
(An example of elementary $v$-colored ribbon graph is the ribbon with one coupon colored by $f$ on the right in Figure \ref{fig:ribbon graph}. In general there may be multiple vertical bands attached to one coupon.)
\end{enumerate}
(For details, see Theorem I.2.5 in \cite{Turaev10}.)
The morphism $F(\Omega)$ associated to a $v$-colored ribbon graph $\Omega$ is called the \textit{operator invariant} of $\Omega$.

\subsection{Modular categories}
A modular category is an input for the RT TQFT.
The definition of a modular category is a ribbon category with a finite many simple objects satisfying several axioms.
(See \cite{Turaev10} for a full definition and refer to \cite{MR1321145} for a ribbon category.)
The objects of a modular category are used as decorations of surfaces as we saw in Section \ref{sec:A 2-category of cobordisms with corners} .
Here we set several notations and state an important lemma.

Let $\V$ be a modular category with a finite set $\{V_i\}_{i\in I}$ of simple objects, where $I$ is a finite index set $I=\{1,2, \dots, \bfk\}$.
We assume that $V_1$ is the unit object $\1$ of $\V$.
The ring $K:=\Hom(\1, \1)$ is called the \textit{ground ring}.
The ground ring $K$ is known to be commutative.
We assume that $\V$ has an element $\D$ called a \textit{rank} of $\V$ given by the formula
\begin{equation*}\label{equ:rank}
\D^2= \sum_{i\in I} \left( \dim(V_i) \right)^2.
\end{equation*}
(This assumption is not essential.)
 Besides the rank $\D$, we need another element $\Delta$ defined as follows.
The modular category has a twist morphism $\theta_{V}: V \to V$ for each object $V$ of $\V$.
Since $V_i$ is a simple object, the twist $\theta_{V_i}$ acts in $V_i$ as multiplication by a certain $v_i \in K$.
Since the twist acts via isomorphism, the element $v_i$ is invertible in $K$.
We set 
\begin{equation*}\label{equ:Delta}
\Delta=\sum_{i \in I} v_i^{-1} \left( \dim(V_i) \right)^2 \in K.
\end{equation*}
The elements $\D$ and $\Delta$ are known to be invertible in $K$.
The following lemma is very important.

\begin{lemma}\label{lem:sum over simple 2}
For any objects $V, W$ of the modular category $\V$, there is a canonical $K$-linear splitting
\begin{equation*}
\Hom(\1, V \otimes W)= \bigoplus_{i \in I} \left( \Hom(\1, V\otimes V_{i}^* )\otimes_K \Hom(\1, V_{i} \otimes W) \right)
\end{equation*}
The isomorphism $u$ transforming the right-hand side into the left-hand side is given by the formula
\begin{equation}\label{equ: cap isom} 
u_i: x \otimes y \mapsto (\id_V\otimes d_{V_i} \otimes \id_W) (x \otimes y),
\end{equation}
where $x \in \Hom(\1, V\otimes V^*_{i})$, $y\in \Hom(\1, V_{i} \otimes W)$.
The map (\ref{equ: cap isom}) is given graphically as in Figure \ref{fig:the map u_i}.
\end{lemma}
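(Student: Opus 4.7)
The plan is to identify this lemma as a direct consequence of the semisimple decomposition of $\Hom$-spaces in $\V$ together with the rigidity adjunctions, followed by a separate graphical verification that the prescribed formula (\ref{equ: cap isom}) really implements the resulting isomorphism.

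First I would invoke the basic semisimple decomposition valid in any modular category: since every object of $\V$ is a finite direct sum of the simple objects $\{V_i\}_{i\in I}$ and $\Hom(V_i,V_j)=K\,\delta_{ij}$ by Schur's lemma, one obtains for any pair of objects $X,Y$ of $\V$ the canonical $K$-linear isomorphism
\[
\Hom(X,Y)\;\cong\;\bigoplus_{i\in I}\Hom(X,V_i)\otimes_K\Hom(V_i,Y),\qquad g\otimes f\longmapsto f\circ g,
\]
which is the standard splitting lemma for semisimple modular categories (\cite{Turaev10}, Chapter II). Apply this with $X=W^*$ and $Y=V$. Then the rigidity (duality) structure of $\V$ supplies canonical $K$-linear adjunctions
\[
\Hom(\1,V\otimes W)\cong\Hom(W^*,V),\quad \Hom(\1,V\otimes V_i^*)\cong\Hom(V_i,V),\quad \Hom(\1,V_i\otimes W)\cong\Hom(W^*,V_i),
\]
each obtained by bending down the appropriate strand with a coevaluation. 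Substituting these three isomorphisms into the semisimple decomposition above yields precisely the direct-sum splitting of $\Hom(\1,V\otimes W)$ claimed in the lemma.

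The last step is to check that the map $u=\bigoplus_i u_i$ defined by (\ref{equ: cap isom}) agrees, under the adjunctions just listed, with the composition map $g\otimes f\mapsto f\circ g$ appearing in the semisimple decomposition. This is a diagrammatic calculation: if $\tilde x:V_i\to V$ and $\tilde y:W^*\to V_i$ denote the bent forms of $x$ and $y$, then inserting the cap $d_{V_i}$ between the $V_i^*$ output of $x$ and the $V_i$ output of $y$, and then bending the surviving $W$-strand to become the $W^*$-input, is exactly the composition $\tilde x\circ\tilde y$, by a single application of the zig-zag identity $(d_{V_i}\otimes\id_{V_i^*})(\id_{V_i^*}\otimes b_{V_i})=\id_{V_i^*}$. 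Once this graphical matching is established, $K$-linearity of $u$ is immediate from its formula and the canonicity of the splitting follows from the canonicity of its two ingredients.

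The main obstacle is this last step, the bookkeeping for the graphical matching: one must be consistent about left versus right duals and about where the ribbon twist $\theta$ is placed when bending strands. However, the axioms of a ribbon category ensure that all the relevant compatibilities hold automatically, so no substantive difficulty arises beyond the careful diagrammatic verification.
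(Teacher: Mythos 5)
The paper gives no proof of this lemma: it defers to Lemma IV.2.2.2 of Turaev \cite{Turaev10}. Your argument is correct and amounts to the standard derivation of that lemma, so it is essentially the cited proof rather than an alternative: you pull back the fundamental semisimplicity splitting $\Hom(X,Y)\cong\bigoplus_i\Hom(X,V_i)\otimes_K\Hom(V_i,Y)$ (valid in a modular category by domination and the $\End(V_i)=K$ axiom) along the rigidity adjunctions, and then match the resulting composition map with the cap $d_{V_i}$.

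One remark to sharpen the last step, which you flag as the main obstacle: when you verify that $(\id_V\otimes d_{V_i}\otimes\id_W)(x\otimes y)$ equals the image of $\tilde x\circ\tilde y$ under the inverse of the $W$-adjunction, the bend-down on $W$ used to form $\tilde y$ and the bend-up used to return to $\Hom(\1,V\otimes W)$ are mutually inverse by construction, so they cancel identically. Concretely, writing $\tilde x=(\id_V\otimes d_{V_i})\circ(x\otimes\id_{V_i})$, one has $(\tilde x\otimes\id_W)\circ y=(\id_V\otimes d_{V_i}\otimes\id_W)\circ(x\otimes y)$ by naturality alone. Hence no choice of left-versus-right dual for $W$, and no ribbon twist, ever appears in the final formula; the only duality that survives is the cap $d_{V_i}$ on the summation index. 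Your appeal to the ribbon axioms is thus correct but even stronger than necessary.
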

For a proof, see Lemma I\hspace{-.1em}V.2.2.2 in \cite{Turaev10}.
\begin{figure}[h]
\center
\includegraphics[width=3.5in]{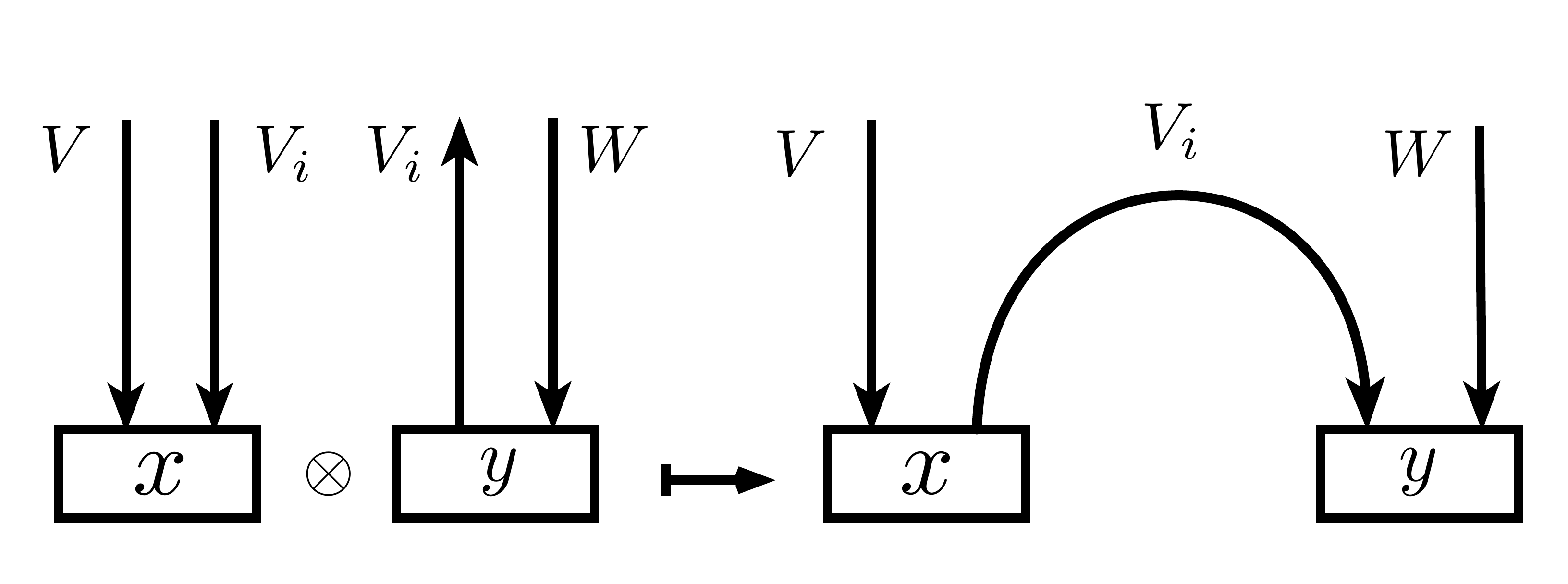}
\caption{The map $u_i$}
\label{fig:the map u_i}
\end{figure}

\subsection{Invariants of 3-manifolds with ribbon graphs}
We review an invariant of closed 3-manifolds with ribbon graphs sitting inside manifolds.
When we will construct the RT TQFT below, a cobordism will be turned into a closed 3-manifold and we apply this invariant.
Let $M$ be a closed connected oriented 3-manifold.
Let $\Omega$ be a $v$-colored ribbon graph over $\V$ in $M$.
Present $M$ as the result of surgery on $S^3$ along a framed link $L$ with components $L_1, \dots, L_m$. Fix an arbitrary orientation of $L$.
This choice can be shown to be irrelevant.
We may assume that  $\Omega \subset S^3 \setminus U$, where $U$ is a closed regular neighborhood of $L$ in $S^3$ by applying an isotopy to $\Omega$ if necessary. 
Denote by $\col(L)$ the set of all mappings from the set of components of $L$ into the index set $I$.
For each $\lambda\in \col(L)$, the pair $(L, \lambda)$ determines a colored ribbon graph $\Gamma(L, \lambda)$ formed by $m$ annuli.
The cores of these annuli are the oriented circles $L_1, \dots, L_m$, the normal vector field on the cores transversal to the annuli represents the given framing.
The color of the $i$-th annuli is $V_{\lambda(L_i)}$.
Since the union $\Gamma(L, \lambda)\cup \Omega$ is a $v$-colored ribbon graph and it has no free ends, the operator invariant $F(\Gamma(L, \lambda) \cup \Omega) \in K=\End(\1)$.
Set
\[ \{L, \Omega\}=\sum_{ \lambda \in \col(L)}\dim(\lambda)F(\Gamma(L, \lambda) \cup \Omega) \in K=\End(\1), \]
where
 \[\dim(\lambda)=\prod_{n=1}^m \dim\left( \lambda(L_n) \right) \mbox{ with } \dim(i):=\dim(V_i).\]
Set
\begin{equation}\label{equ:invariant tau}
\tau(M, \Omega)=\Delta^{\sigma(L)} \D^{-\sigma(L)-m-1} \{L, \Omega\}.
\end{equation}
Here $\sigma(L)$ is the signature of the surgery link $L$.
It is an important fact that $\tau(M, \Omega)$ is a topological invariant of the pair $(M, \Omega)$. 
The invariants $\tau$ extends to $v$-colored ribbon graphs in any non-connected closed oriented 3-manifold $M$ by the formula
\[\tau(M, \Omega)=\prod_{r} \tau(M_r, \Omega_r),\]
where $M_r$ runs over the connected components of $M$ and $\Omega_r$ denotes the part of $\Omega$ lying in $M_r$. 

The invariant $\tau(M, \Omega)$ satisfies the following multiplicativity law:
\begin{equation}\label{equ:tau multiplicative}
\tau(M_1 \# M_2, \Omega_1 \sqcup \Omega_2)=\D \tau(M_1, \Omega_1)\tau(M_2, \Omega_2),
\end{equation}
where $\Omega_1$ and $\Omega_2$ are $v$-colored ribbon graphs in closed connected oriented 3-manifolds $M_1$ and $M_2$, respectively.
This can be seen as follows.
Let $L$ and $L'$ be surgery links for $M_1$ and $M_2$, respectively.
Then the ribbon $L\cup \Omega_1$ and $L'\cup \Omega_2$ sitting inside the same $S^3$ separately is a pair of a surgery link for $M_1 \# M_2$ and $\Omega_1\sqcup \Omega_2$.
Then the formula (\ref{equ:tau multiplicative}) follows from the direct calculation using the defining formula (\ref{equ:invariant tau}).
 
\subsection{Construction of the (non-extended) Reshetikhin-Turaev TQFT}
Now we review the construction of the original RT TQFT for closed surfaces and cobordisms without corners.
Instead of going over the original construction, we modify it so that it adapts in our setting.
Let us fix a modular category $\V$.
As we noted in Section \ref{subsec:Remark}, the original theory concerns only decorated types of the form
\[t=(0, 0; (W_1, \nu_1), \dots, (W_m, \nu_m), 1,1,\dots, 1),\]
where $W_i$ is an object of a modular category and $\nu_i$ is either $1$ or $-1$ for $i=1, \dots, m$.
We can modify the theory using more general decorated types
\begin{equation}\label{equ:type 0 0}
t=(0,0; a_1, \dots, a_p),
\end{equation}
where $a_i$ is non-negative integer or the pair of an object of the modular category $\V$ and a sign $\pm 1$.
Note that the first two entries should be zero since these numbers encode the number of boundary components of surfaces, which is zero for the original RT theory.
In the following, we review the RT theory replacing the original decorated types with types  of the form in (\ref{equ:type 0 0}).

Let $t=(0,0; a_1, \dots, a_p)$ and $s=(0,0; b_1, \dots, b_q)$ be decorated types whose first two entries are zero.
Let $[(M, \phi)]: \Sigma_t \Rightarrow \Sigma_s:\lempty \to \rempty$ be a 2-morphism of the 2-category $\Co$.
Note that since the first two entries of the types $t, s$ are zero, we have $\Sigma(\phi)=\Sigma_t \sqcup \Sigma_s^-$. 
The images $\phi(\Sigma_t)$ and $\phi(\Sigma_s^-)$ are denoted by $\bottomb M$ and $\topb M$, respectively as before.
The Reshetikhin-Turaev TQFT is a pair of assignments $(\tau, \T)$ which will be constructed below so that $\tau(M)$ is a $K$-homomorphism from a projective module $\T(\bottomb  M)$ to a projective module $\T(\topb M)$.

\subsubsection{Definition of the projective module $\T(S)$}
For a decorated surface $S$ of type $t=(0,0; a_1, \dots, a_p)$, the projective module $\T(S)$ is defined as follows.
In a non-precise but instructive way, we think that $\T(S)$ is a projective module of all possible colors for the coupon of the ribbon graph $R_t$ in Figure \ref{fig:Rtnew}.
To make it precise, we set up several notations.
First, we define an object $H_i^a$ of the modular category $\V$ as follows.
Here $a$ is either a positive integer or $a=(W, \nu)$ is a signed object of the modular category $\V$.
For a positive integer $a$ and $i=(i_1,\dots, i_a)\in I^a$, we set
\begin{equation}\label{equ:H_i^a}
H^a_i=V_{i_1}\otimes V_{i_2}\otimes \cdots \otimes V_{i_a} \otimes V^*_{i_a}\otimes \cdots \otimes V_{i_2}^* \otimes V^*_{i_1}.
\end{equation}
If $a=(W, \nu)$ is a signed object of $\V$, we set $I^a$ to be a set of only one element and 
set $H^a_i=W^{\nu}$ for $i\in I^a=\{i\}$.
Here we used the letter $i$ for the unique element of $I^a$ to streamline notations.
Note that the tensor product in (\ref{equ:H_i^a}) can be used as a color for rainbow like bands in $R_t$ corresponding to an integer entry $a$.
For a type $t=(m, n; a_1, a_2, \dots, a_p)$, we write 
\begin{equation}\label{equ:I^t}
I^t:=I^{a_1} \times \cdots \times I^{a_p}. 
\end{equation}
For $\zeta=(\zeta_1, \dots, \zeta_p) \in I^t$ with $\zeta_1=(\zeta_1^1, \dots, \zeta_1^{a_1}), \dots, \zeta_p=(\zeta_p^1, \dots, \zeta_p^{a_p})$, we set
\begin{equation}\label{equ:Phi.t.zeta}
\Phi(t; \zeta)= H^{a_1}_{\zeta_1} \otimes H^{a_2}_{\zeta_2} \otimes \cdots \otimes H^{a_p}_{\zeta_p}.
\end{equation}
Note that each choice of $\zeta=(\zeta_1, \dots, \zeta_p) \in I^t$ determines the color of ribbon graph $R_t$ except for the coupon.
The coupon can be colored by a morphism from the monoidal unit $\1$ to the element $\Phi(t; \zeta)$.
Thus all the possible colors of the coupon of $R_t$ varying $\zeta \in I^t$ is
\begin{equation}\label{equ:T(S)}
 \T(S):=\bigoplus_{\zeta \in I^t} \Hom \big(\1, \Phi(t; \zeta) \big)
\end{equation}
and we define it to be $\T(S)$.
Since $\Phi(t;\zeta)$ is an object of the modular category $\V$, $\T(S)$ is a projective module over the grand ring $K=\Hom(\1, \1)$.

\subsubsection{Definition of $K$-homomorphism $\tau(M)$}\label{subsec:tau M}
Let $[(M, \phi)]: \Sigma_t \Rightarrow \Sigma_s: \lempty \to \rempty$ be a 2-morphism of the 2-category $\Co$ and fix a representative $(M, \phi)$.
We explain the construction of the corresponding $K$-homomorphism $\tau(M)$ from $\T(\bottomb M)$ to $\T(\topb M)$.
Glue the standard handlebody $U_t$ and $U_s^-$ to $M$ along the parametrization $\phi$.
The resulting manifold $\tilde{M}$ is a closed 3-manifold with ribbon graph $\tilde{\Omega}$ sitting inside $\tilde{M}$.
The ribbon graph $\tilde{\Omega}$ is obtained by gluing the ribbon graph in $M$ and the ribbon graph $R_t$ and $-R_s$ sitting inside the standard handlebodies $U_t$ and $U_s^-$.
The ribbon graph $\tilde{\Omega}$ is not $v$-colored since the cap-like rainbow bands and the cup-like rainbow bands and the coupons of $R_t$ and $-R_s$ in the newly glued handlebodies are not colored.
By its definition, each element of the module $\T(\bob M)$ determines a color of $R_t$ and each element of $\T(\topb M)^*$ determines a color of $-R_s$.
For such a choice of color $y$ of $\tilde{\Omega}$ we obtain a $v$-coloring of $\tilde{\Omega}$.
Applying the invariant $\tau$ of $v$-colored ribbon graph in a closed 3-manifold defined in (\ref{equ:invariant tau}), we obtain a certain element $\tau(\tilde{M}, \tilde{\Omega}, y)\in K$.
This induces a $K$-homomorphism $\T(\bottomb M) \otimes_K \T(\topb M)^* \to K$.
Taking adjoints, we get a $K$-homomorphism 
\begin{equation}\label{equ:after adjoint}
T(\bottomb M) \to \T(\topb M).
\end{equation}
To finish the construction of $\tau(M):T(\bottomb M) \to \T(\topb M)$, we compose the above $K$-homomorphism with an endomorphism $\eta$ defined as follows.
Let $S$ be a connected parametrized $d$-surface of type $t=(0,0; a_1, \dots, a_p)$.
The endomorphism $\eta(S): \T(S) \to T(S)$ preserves the splitting (\ref{equ:T(S)}) and acts in each summand $\Hom(\1, \Phi(t;\zeta))$ as multiplication by $\D^{1-g}\dim(\zeta)$, where $g$ is the sum of integer entries of the type $t$ and
\[\dim(\zeta):=\prod_{i=1}^p\dim(\zeta_i) \]
with
\[\dim(\zeta_i):=
\begin{cases}
\prod_{l=1}^{a_i} \dim(\zeta_i^l) & \mbox{ if } a_i \in \Z \\
1 & \mbox{ if } a_i \mbox{ is a mark.}
\end{cases}
\]
Recall that $\dim(\zeta_i^l)$ denotes the dimension of the simple object $V_{\zeta_i^l}$.
Now we complete the construction of $\tau(M):T(\bottomb M) \to \T(\topb M)$ by composing the $K$-homomorphism (\ref{equ:after adjoint}) with $\eta(\topb M): \T(\topb M) \to \T(\topb M)$.
The pair $(\tau, \T)$ is the \textit{Reshetikhin-Turaev TQFT}.
In general, this is not a functor because it has \textit{gluing anomaly}.

\subsubsection{Explicit Formula for the homomorphism $\tau(M)$}\label{subsec:explicit formula for tau(M)}
We will develop a technique of presentation of a decorated connected 3-cobordism $M$ by a certain ribbon graph in $\R^3$ and give the explicit formula to calculate the homomorphism $\tau(M)$ using this ribbon graph.

First as we are in the closed case, let $t=(0,0; a_1, \dots, a_p)$ and $s=(0,0; b_1, \dots, b_q)$ be decorated types whose first two entries are zero.
Let $[(M, \phi)]: \Sigma_t \Rightarrow \Sigma_s:\lempty \to \rempty$ be a 2-morphism of the 2-category $\Co$ and fix a representative $(M, \phi)$.
We assume that the cobordism $M$ is connected.
As above, we glue the standard handlebodies $U_t$ and $U_s^-$ using the parametrization $\phi$ to $M$.
We obtain the closed connected 3-manifold $\tilde{M}$ and the partially colored ribbon graph $\tilde{\Omega}$ sitting inside $\tilde{M}$.
Present $\tilde{M}$ as the result of surgery on a framed link $L$ in $S^3$.
Namely, we have a homeomorphism from $M_L$ to $\tilde{M}$, where $M_L$ is the resulting 3-manifold of surgery along $L$.
Let $H=U_t \cup U_s^- \subset S^3$.
We may think that $H$ is a subset of $S^3 \setminus T(L)\subset M_L$, where $T(L)$ is a closed tubular neighborhood of the link $L$.
Restricting this homeomorphism, we see that the pair $(M, \phi)$ is equivalent to $(M_L, \id)$.
Thus we may assume that $\tilde{\Omega}$ is the union of $R_t, -R_s$ and a surgery link and a ribbon graph in $M$ in $\R^2 \times [0, 1] \subset \R^3 \subset S^3$.
Of course, the surgery link might be tangled with $R_t$ and $-R_s$.
By isotopy, we pull $R_t$ down so that the top of the coupon of $R_t$ lies in $\R \times \{0\}\times \{0\}$.
Also we move $-R_s$ up so that the bottom of the coupon $-R_s$ lies in $\R \times \{0\} \times \{1\}$ and move the rest of the ribbon in $\R^2\times (0, 1)$.
See Figure \ref{fig:special ribbon graph}.

\begin{figure}[h]
\center
\includegraphics[width=3.8in]{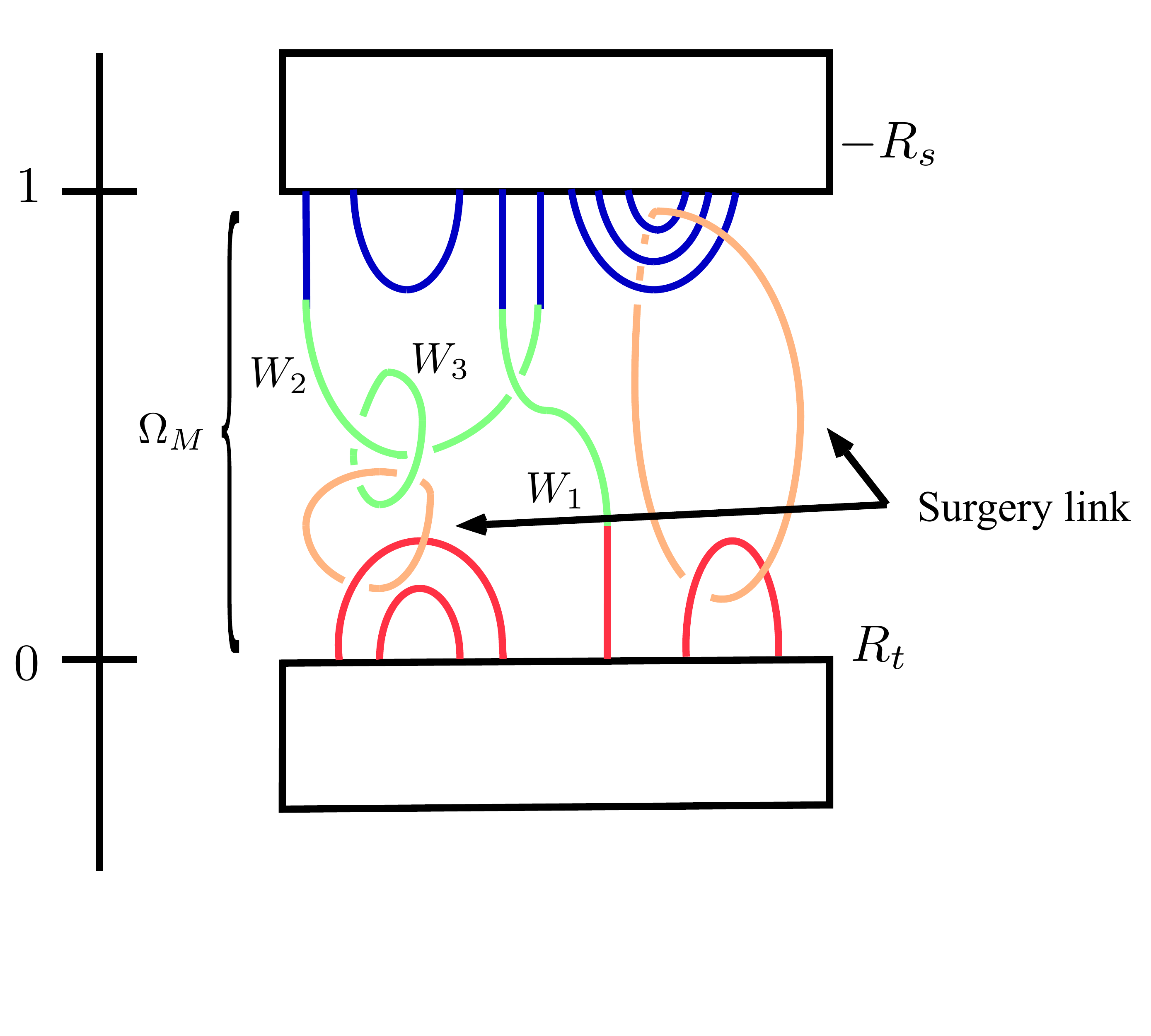}
\caption{Special ribbon graph}
\label{fig:special ribbon graph}
\end{figure}
Let $\Omega_M$ be a ribbon graph obtained by removing the coupons of $R_t$ and $-R_s$ from $\tilde{\Omega}$.
We call $\Omega_M$ \textit{special ribbon graph} for $M$.

We now give an explicit formula for computing the homomorphism $\tau(M): \T(\bottomb M) \to \T(\topb M)$ from the operator invariants of the special ribbon graph $\Omega_M$ (after coloring $\Omega_M$).
With respect to the splittings (\ref{equ:T(S)}) of $\T(\bottomb M)$ and $\T(\topb M)$, the homomorphism $\tau(M)$ may be presented by a block matrix $\tau_{\zeta}^{\eta}$, where 
\[\zeta=(\zeta_1, \dots, \zeta_p) \in I^t \mbox{ with } \zeta_1=(\zeta_1^1, \dots, \zeta_1^{a_1}) \in I^{a_1}, \dots, \zeta_p=(\zeta_p^1, \dots, \zeta_p^{a_p})\in I^{a_p}\]
and
\[\eta=(\eta_1, \dots, \eta_q) \in I^s \mbox{ with } \eta_1=(\eta_1^1, \dots, \eta_1^{b_1}) \in I^{b_1}, \dots, \eta_q=(\eta_q^1, \dots, \eta_q^{b_q})\in I^{b_q}.\]
Each such $\zeta\in I^t$ determines a coloring of the cap-like rainbow bands of $R_t$ in $\Omega_M$.
Similarly each such $\eta \in I^s$ determines a coloring of the cup-like rainbow bands of $-R_s$ in $\Omega_M$.
Therefore a pair $(\zeta, \eta)\in I^t \times I^s$ determines a coloring of uncolored bands of $\Omega_M$.
Note that the surgery link $L$ in $\Omega_M$ is not colored.
(More precisely, the ribbon graph obtained by thickening the surgery link along framing.)
Every element $\lambda \in \col(L)$ determines coloring of the surgery ribbon.
Thus every element $(\zeta, \eta, \lambda)\in I^t \times I^s \times \col(L)$ determines a $v$-coloring of $\Omega_M$.
Denote the resulting $v$-colored ribbon graph in $R^3$ by $(\Omega_M, \zeta, \eta, \lambda)$.
Consider its operator invariant $F(\Omega_M, \zeta, \eta, \lambda): \Phi(t; \zeta)\to \Phi(s; \eta)$ defined in Section \ref{subsec:operator invariant}.
The composition of a morphism $\1 \to \Phi(t; \zeta)$ with $F(\Omega_M, \zeta, \eta, \lambda)$ defines a $K$-linear homomorphism $\Hom(\1, \Phi(t; \zeta))\to \Hom(\1, \Phi(s;\eta))$ denoted by $F_0(\Omega_M, \zeta, \eta, \lambda)$.
It follows from the very definition of $\tau(M)$ given in Section \ref{subsec:tau M} that

\begin{equation}\label{equ:tau zeta eta}
\tau_{\zeta}^{\eta}= \Delta^{\sigma(L)} \D^{-g^+ -\sigma(L) - m} \dim(\eta) \sum_{\lambda \in \col(L)} \dim(\lambda) \F_0(\Omega_M, \zeta, \eta, \lambda),
\end{equation}
where $g^+$ is the sum of the integer entries of $s$ and 
\[\dim(\eta):=\prod_{i=1}^q\dim(\eta_i) \]
with
\[\dim(\eta_i):=
\begin{cases}
\prod_{l=1}^{b_i} \dim(\eta_i^l) & \mbox{ if } b_i \in \Z \\
1 & \mbox{ if } b_i \mbox{ is a mark.}
\end{cases}
\]

\section{An extended TQFT $\X$}\label{sec:An extended TQFT}
Now we proceed to construct a projective pseudo 2-functor $\X$ from the 2-category $\Co$ of decorated cobordisms with corners  to the Kapranov-Voevodsky 2-vector spaces $\KV$ that will be our extension of the Reshetikhin-Turaev TQFT functor.
For our convention of the language of 2-category, see Appendix.
As in the Reshetikhin-Turaev theory, we fix a modular category $\V$ with $\bfk$ simple objects $V_1, \dots, V_{\bfk}$.
Let $I$ be the index set of simple objects, hence its cardinality is $|I|=\bfk$.
Let $K$ denote the ground ring $\Hom(\1,\1)$.

\subsection{$\X$ on objects}
Each object $\nstand{n}$ of $\Co$ for a natural number $n$ is mapped by $\X$ to the object $\{\bfk^n\}$ of $\KV$.
The formal symbol objects $\lempty$ and $\rempty$ are mapped to the object $\{1\}$.

\subsection{$\X$ on 1-morphisms}\label{sec:X on 1-morphisms}
For a 1-morphism $\Sigma_t: \nstand{m} \to \nstand{n}$ with a type 
\[t=(m, n;a_1, a_2, \dots, a_p),\]
 we need to define a $\bfk^m\times \bfk^n$ 2-matrix $\X(\Sigma_t)$.
Using the lexicographic order of the power sets $I^m$ and $I^n$, pick $i$-th element of $I^m$ and $j$-th element of $I^n$.
Abusing notation, we write the $i$-th element of $I^m$ as $i=(i_1, i_2, \dots, i_m)$ and the $j$-th element of $I^n$ as $j=(j_1, \dots, j_n)$.
For each decorated type $t$, we defined the ribbon graph $R_t$ (Figure \ref{fig:Rtnew}).
In the ribbon graph $R_t$, there are $m=L(t)$ uncolored bands on the left and $n=R(t)$ uncolored bands on the right.
Those bands are bent for the convenience of horizontal gluing.
From now on we just draw vertical bands instead of bent ones for the sake of simple graphics as in Figure \ref{fig:non bend Rt}.
\begin{figure}[h]
\center
\includegraphics[width=3.4in]{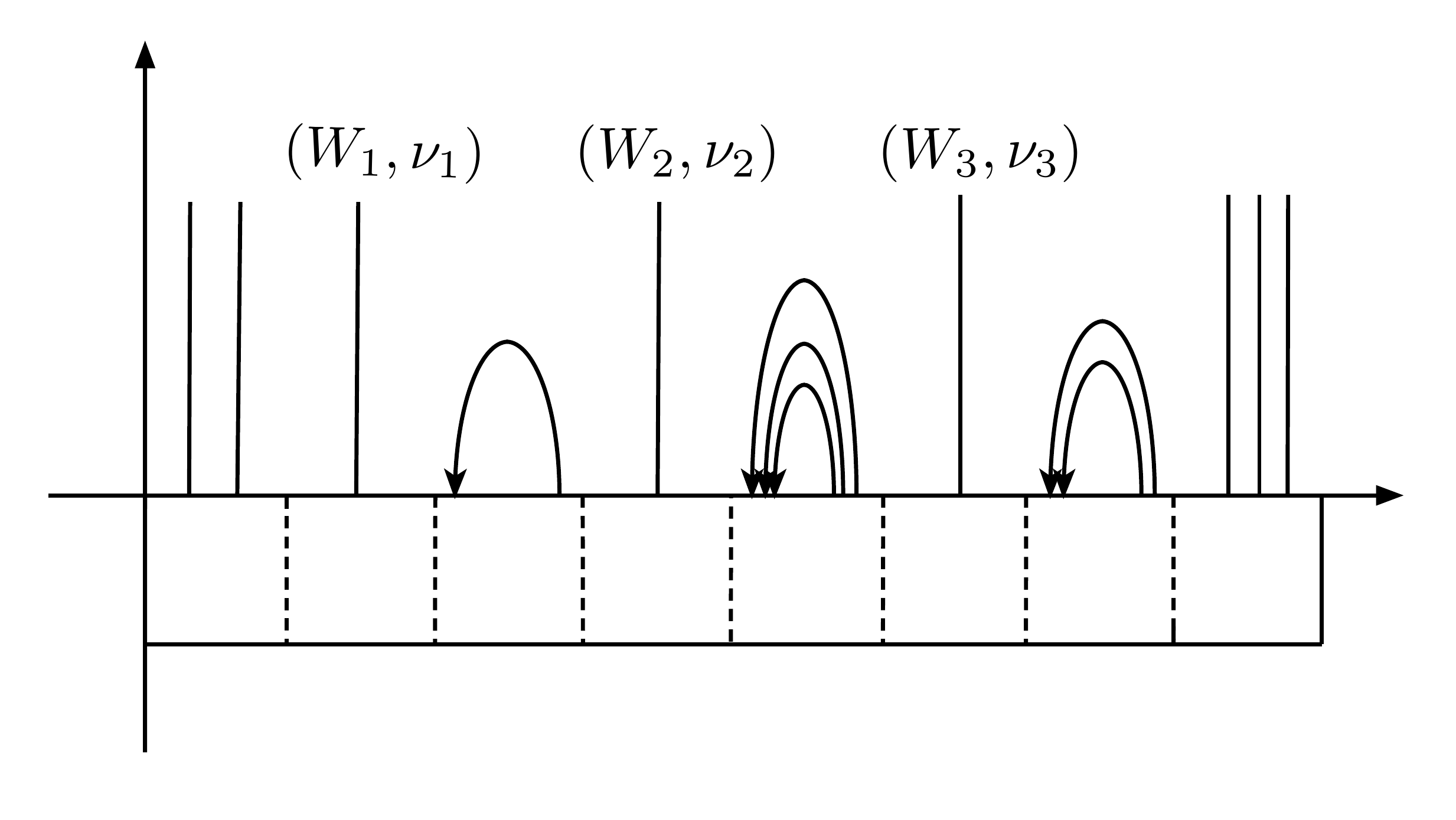}
\caption{The ribbon graph $R_t$}
\label{fig:non bend Rt}
\end{figure}
Recall that those left bands are ordered from the left and those right bands are ordered from the right.
We color the $k$-th left uncolored band with the simple object $V_{i_k}$ for $k=1, \dots, m$.
Also we color the $l$-th right uncolored band with the simple object $V_{j_l}$ for $l=1, \dots, n$.
Let us denote the ribbon graph obtained by this way $R_t(i, j)$.
The only uncolored ribbons in $R_t(i,j)$ are the cap-like bands. 
See Figure \ref{fig:ijcolored}.
\begin{figure}[h]
\center
\includegraphics[width=4in]{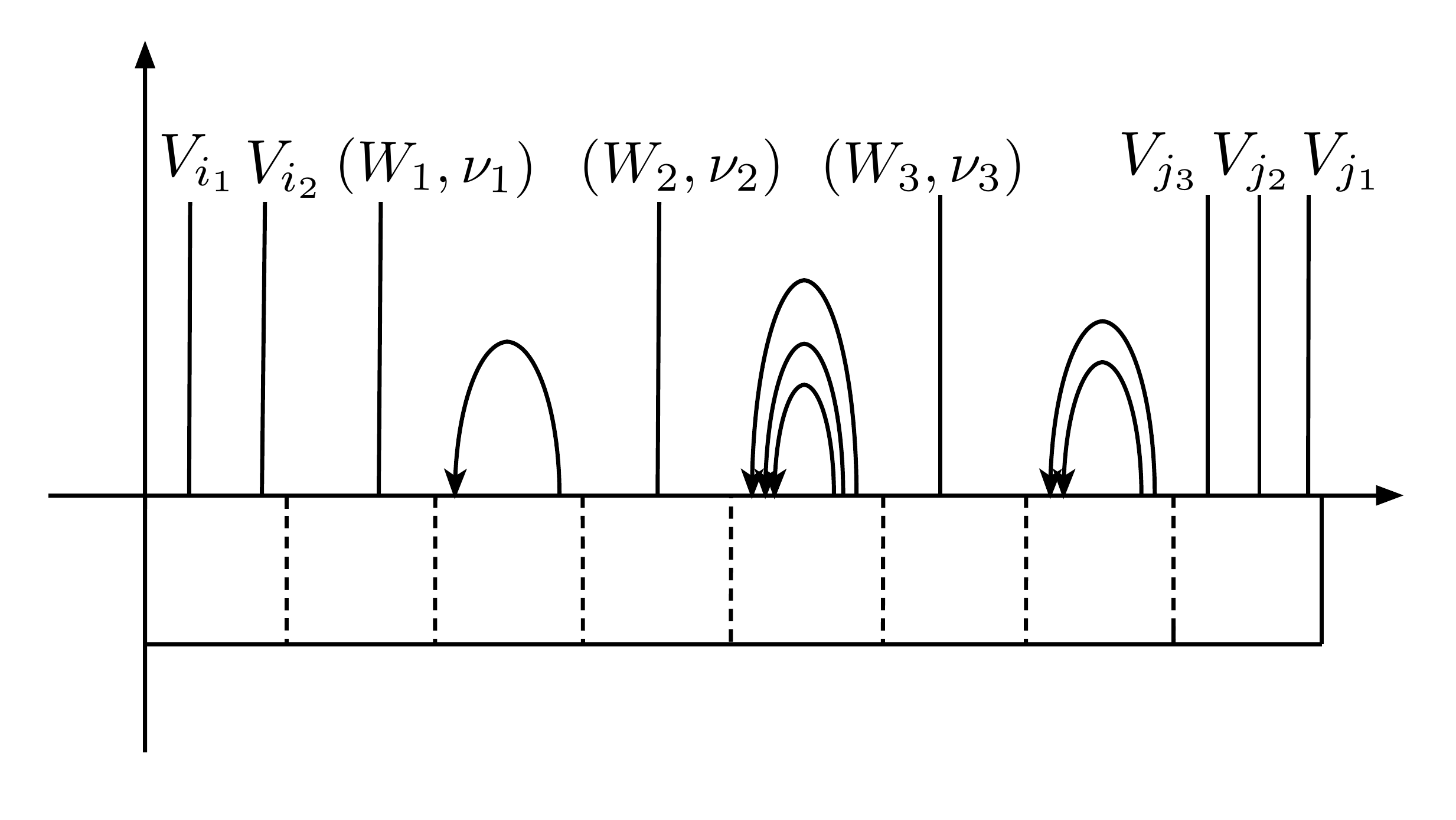}
\caption{The ribbon graph $R_t(i,j)$}
\label{fig:ijcolored}
\end{figure}
Fixing $i\in I^m$ and $j\in I^n$, we consider all the possible colors of the uncolored cap-like bands by simple objects.
The $(i, j)$-component module $\X(\Sigma_t)_{ij}$ of the 2-matrix $\X(\Sigma_t)$ will be the projective module of all the possible colors of the coupon $R_t(i ,j)$.
Recall the definition of the object $H_i^a$ of the modular category $\V$ defined in (\ref{equ:H_i^a}).
For a positive integer $a$ and $i=(i_1,\dots, i_a)\in I^a$, the object was defined to be
\begin{equation}\label{equ:Hai}
H^a_i=V_{i_1}\otimes V_{i_2}\otimes \cdots \otimes V_{i_a} \otimes V^*_{i_a}\otimes \cdots \otimes V_{i_2}^* \otimes V^*_{i_1}.
\end{equation}
If $a=(W, \nu)$ is a signed object of $\V$, we set $I^a$ to be a set of only one element and 
set $H^a_i=W^{\nu}$ for $i\in I^a$.
Note that the tensor product in (\ref{equ:Hai}) can be used as a color for rainbow like bands corresponding to an integer entry $a$ in a decorated type.
For $\zeta=(\zeta_1, \dots, \zeta_p) \in I^t$ with $\zeta_1=(\zeta_1^1, \dots, \zeta_1^{a_1}), \dots, \zeta_p=(\zeta_p^1, \dots, \zeta_p^{a_p})$, recall the notation given in (\ref{equ:Phi.t.zeta}):
\begin{equation*}
\Phi(t; \zeta)= H^{a_1}_{\zeta_1} \otimes H^{a_2}_{\zeta_2} \otimes \cdots \otimes H^{a_p}_{\zeta_p}.
\end{equation*}
The next one is a new notation.
We set
\begin{equation}\label{equ: Phi for e-type}
\Phi(t; \zeta; i, j)=V^*_{i_1} \otimes V^*_{i_2} \otimes \cdots \otimes V^*_{i_m} \otimes \Phi(t, \zeta) \otimes V_{j_n}\otimes V_{j_{n-1}}\otimes \cdots \otimes V_{j_1},
\end{equation}
Note that each choice of $\zeta=(\zeta_1, \dots, \zeta_p) \in I^t$ determines a color of ribbon graph $R_t(i, j)$ via $\Phi(t; \zeta; i, j)$ except for the coupon.
The coupon can be colored by a morphism from the monoidal unit $\1$ to the object $\Phi(t; \zeta; i, j)$.
Thus all the possible colors of the coupon of $R_t(i, j)$ varying $\zeta \in I^t$ is
\begin{equation}\label{equ:tsigma}
 \X(\Sigma_t)_{ij}=\bigoplus_{\zeta \in I^t} \Hom \big(\1, \Phi(t; \zeta;i,j) \big)
\end{equation}
and we define this to be the $(i, j)$-component projective module $\X(\Sigma_t)_{ij}$.

We also need to specify the assignment of $\X$ on each formal identity 1-morphism $\id_n: \nstand{n} \to \nstand{n}$ with an integer  $n$.
The $\bfk^n \times \bfk^n$ 2-matrix $\X(\Sigma_n)$ is defined to be the identity $\bfk^n \times \bfk^n$ 2-matrix.
Namely each diagonal entry of the 2-matrix $\X(\id_n)$ is the ground ring $K$ and each entry off the diagonal is zero.

\subsection{$\X$ on 2-morphisms}
Let $[M]: \Sigma_{t} \Rightarrow \Sigma_{s}: \nstand{m} \to \nstand{n}$ be a 2-morphism of $\Co$.
We need to define a $K$-homomorphism $\X([M])_{ij}$ from $\X(\Sigma_{t})_{ij}$ to $\X(\Sigma_{s})_{ij}$ for each $i\in I^m$ and $j\in I^n$.
This homomorphism will be obtained by applying the Reshetikhin-Turaev TQFT to the decorated cobordism obtained by ``capping'' or ``filling'' the left and the right boundaries of $M$.

\subsubsection{Filling $M$ by the standard handlebodies}
Let $(M, \phi)$ be a representative of the 2-morphism $[M]: \Sigma_{t} \Rightarrow \Sigma_{s}: \nstand{m} \to \nstand{n}$.
Here $\phi$ is a parametrization of the boundary $\partial M$.
Recall that using the parametrization, we can form the surface $\Sigma(\phi)$ by gluing standard surfaces.
Then the parametrization $\phi$ can be regarded as a homeomorphism from $\Sigma(\phi)$ to $\partial M$.
Consider the standard handlebodies $U_t$, $U_s^-$ and solid cylinders $D_m$, $D_n$ (see Figure \ref{fig:Dn}).
Their boundaries are capped standard surfaces.
Then the gluing map induced by the parametrization extends to the disks enclosed by boundary circles of the standard surfaces.
Gluing $U_t$, $U_s^-$, $D_m$, and $D_n$ along this homeomorphism, we obtain a 3-manifold whose boundary is $\Sigma(\phi)$.
We also can assume that the ribbon graphs glues well under this gluing.
Let $\M(t,s)$ be the manifold obtained by the procedure and we call it the \textit{standard handlebody} for the pair $(t,s)$.
By the Alexander trick, the manifold $\M(t, s)$ is defined up to homeomorphism.
The manifold $\M(t, s)$ is equiped with  a ribbon graph obtained from the ribbon graph $R_t$ and $R_s$ in $U_t$ and $U_s^-$ respectively, and the vertical bands in $D_m$, $D_n$ joining uncolored bands along the embedded disks.
We denote the ribbon graph by $R(t,s)$ and call this ribbon graph in $\M(t, s)$ the \textit{standard ribbon graph} for the pair $(t, s)$.
\begin{figure}[h]
\center
\includegraphics[width=3in]{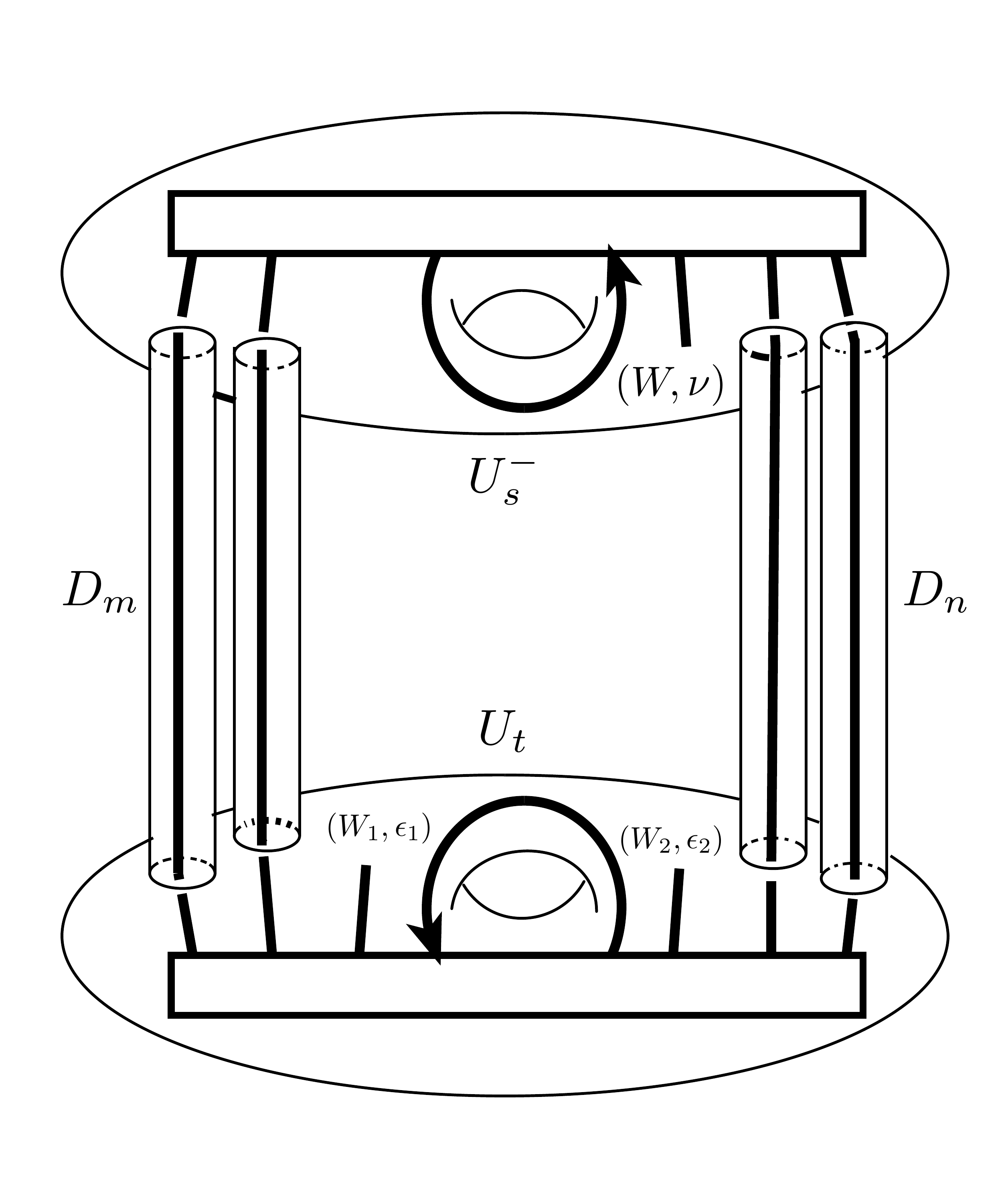}
\caption{The standard handlebody $\M(t, s)$ and the standard ribbon graph $R_t(i,j)$}
\label{fig:standard handlebody with the standard ribbon}
\end{figure}
Now we glue the manifolds $M$ and $\M(t, s)$ along the boundaries by the parametrization $\phi$ and obtain the closed 3-manifold
\begin{equation}\label{equ: tilde M}
\Fill(M):=M\cup_{\phi} \M(t, s).
\end{equation}
In a sense, we ``filled'' the boundary of $M$ by the standard handlebody $\M(t, s)$.
It equips with a ribbon graph coming from a ribbon graph in $M$ and the standard ribbon graph $R(t, s)$ in $\M(t, s)$

The same manifold can be obtained differently as follows.
First, we glue cylinders $D_m$ and $D_n$ to $M$ via $\phi$, which we denote by
\begin{equation}
\cFill(M):=M\cup_{\phi}(D_m\sqcup D_n)
\end{equation}
Namely, we filled only cylindrical parts of $M$.
The subscript $\mathrm{c}$ in $\cFill$ stands for ``cylinder''.
Then the boundary of $M\cup_{\phi}(D_m\sqcup D_n)$ is homeomorphic to the disjoint union of the capped standard boundaries $\hat{\Sigma}_t \sqcup \hat{\Sigma}_s^-$.
The gluing homeomorphism is given by the parametrization $\phi$ extended to embedded disks.
Thus $\cFill(M)$ is a usual parametrized cobordism except that the ribbons in $D_m\sqcup D_n$ are not colored.
If we glue the standard handlebodies $U_t$ and $U_s^-$ via the parametrization we obtain the same manifold $\Fill(M)$ as in (\ref{equ: tilde M}).

\subsubsection{Definition of a $K$-homomorphism $\X([M])_{ij}$}
Fix the $i$-th element $i=(i_1, i_2, \dots, i_m)$ of $I^m$   and the $j$-th element $j=(j_1, \dots, j_n)$ of $I^n$ as before.
We will construct a $K$-homomorphism $\X([M])_{ij}$ from $\X(\Sigma_{t})_{ij}$ to $\X(\Sigma_{s})_{ij}$.
Let us give colors to the uncolored ribbon graphs of $D_m \sqcup D_n$ in $\cFill(M)=M\cup_{\phi}(D_m\sqcup D_n)$ as follows.
Order the uncolored bands in $D_m$ from the left and order the bands in $D_n$ from the right according to the order of the circles in the boundary surface.
We color the $k$-th left uncolored band with the simple object $V_{i_k}$ for $k=1, \dots, m$.
Also we color the $l$-th right uncolored band with the simple object $V_{j_l}$ for $l=1, \dots, n$.
Then $\cFill(M)$ together with this $v$-colored ribbon graph in $\cFill(M)$, which we denote by $\cFill(M)_{ij}$, is the normal cobordism of the Reshetikhin-Turaev type.
Apply the Reshetikhin-Turaev TQFT, we obtain a $K$-homomorphism $\tau(\cFill(M)_{ij})$ from $\X(\Sigma_t)_{ij}$ to $\X(\Sigma_s)_{ij}$.
\begin{lemma}
If $(M, \phi)$ is equivalent to $(N, \psi)$, then $\tau(\cFill(M)_{ij})=\tau(\cFill(N)_{ij})$.
\end{lemma}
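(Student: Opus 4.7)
The plan is to lift the equivalence $f\colon N\to M$ (the $d$-homeomorphism commuting with $\phi_*$ and $\psi_*$ for $*=B,T,L,R$) to a $d$-homeomorphism $\tilde f\colon\cFill(N)_{ij}\to\cFill(M)_{ij}$ of RT-style parametrized cobordisms, and then invoke topological invariance of the Reshetikhin--Turaev homomorphism $\tau$.

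First I would define $\tilde f$ by gluing: set $\tilde f=f$ on $N\subset\cFill(N)$ and $\tilde f=\id$ on the two cylinders $D_m\sqcup D_n$. The well-definedness on the identified boundary is exactly the statement $\phi_L=f\circ\psi_L$ and $\phi_R=f\circ\psi_R$, so the pieces match under the gluing map $\phi$ on one side and $\psi$ on the other. Since the ribbon graph inside each $D_m,D_n$ is fixed and colored identically on both sides by the chosen simple objects $V_{i_k},V_{j_l}$, the identity part of $\tilde f$ carries the $v$-colored graph in $\cFill(N)_{ij}$ to that in $\cFill(M)_{ij}$; and $f$ by assumption preserves the decorated ribbon graph in $M$. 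Hence $\tilde f$ is a $d$-homeomorphism of decorated closed-up cobordisms.

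Second, I would verify that $\tilde f$ commutes with the RT-style parametrizations of the capped standard surfaces $\hat\Sigma_t$ and $\hat\Sigma_s^-$ which form the new bottom/top of $\cFill(M)$ and $\cFill(N)$. On the parts of $\hat\Sigma_t,\hat\Sigma_s^-$ coming from $\Sigma_t,\Sigma_s^-$ this is exactly the hypothesis $f\circ\psi_B=\phi_B$ and $f\circ\psi_T=\phi_T$; on the capping disks it holds trivially because $\tilde f$ restricts to the identity there and the parametrization was extended identically on both sides. Thus $(\cFill(M)_{ij},\phi_B\cup\id,\phi_T\cup\id)$ and $(\cFill(N)_{ij},\psi_B\cup\id,\psi_T\cup\id)$ are equivalent in the sense of the (non-extended) Reshetikhin--Turaev theory reviewed in Section \ref{sec:Review and Modification of the Reshetikhin-Turaev TQFT}.

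Third, I would appeal to the fact, recalled there, that $\tau$ depends only on the equivalence class of a decorated cobordism to conclude $\tau(\cFill(M)_{ij})=\tau(\cFill(N)_{ij})$. The only point requiring any care is the well-definedness of the glued map $\tilde f$, i.e.\ checking that the two definitions agree on the identified boundary circles/arcs together with their base points and any ribbon-graph intersections; this is where the hypothesis that $f$ intertwines $\phi_L,\phi_R$ with $\psi_L,\psi_R$ is used, and it is the main (though essentially routine) obstacle.
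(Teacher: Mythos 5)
Your proof is correct and takes essentially the same approach as the paper: the paper likewise observes that the equivalence $N\to M$ induces a $d$-homeomorphism $\cFill(N)_{ij}\to\cFill(M)_{ij}$ and then invokes invariance of the Reshetikhin--Turaev assignment $\tau$ under $d$-homeomorphisms. You have simply spelled out the gluing construction of that $d$-homeomorphism and the boundary compatibility checks which the paper leaves implicit.
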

\begin{proof}
Since $M$ and $M$ are equivalent, we see that $\cFill(M)_{ij}$ is $d$-homeomorphic to $\cFill(N)_{ij}$.
Since the Reshetikhin-Turaev TQFT $\tau$ is invariant under $d$-homeomorphisms, we have the result.
(To see this invariance, note that $d$-homeomorphic cobordisms have the same special ribbon graph representation.)
\end{proof}

Thus the $K$-homomorphism $\tau(\cFill(M)_{ij})$ is independent of the choice of a representative of $[M]$.
Hence we can define the $(i,j)$-entry of the 2-matrix $\X(M)$ to be
\begin{equation}\label{equ:XM ij}
\X([M])_{ij}:=\tau(\cFill(M)_{ij})
\end{equation}

For each formal identity $\id_{\Sigma_t} \in \Co(\nstand{m}, \nstand{n})$, $\X$ assigns the $\bfk^m$ by $\bfk^n$  2-homomorphism matrix whose $(i,j)$-entry is the identity self-homomorphism of the module $\X(\Sigma_t)_{ij}$.
For the formal horizontal unit 2-morphism $\id_{\id_n}$, $\X$ assigns the $k^n$ by $k^n$ identity 2-homomorphism.
Namely, each of its diagonal entry is the identity self-homomorphism of the base ring $K$ and off-diagonal entries are zero.

\subsubsection{Representation by a ribbon graph}
The rest of the paper will be devoted to prove that the assignment $\X$ is indeed a projective pseudo 2-functor.
The key ingredient of the proof is the explicit formula to calculate the homomorphism $\X(M)_{ij}$ obtained by representing $M$ by a special ribbon graph as in Section \ref{subsec:explicit formula for tau(M)}.
We define a \textit{special ribbon graph} for $(M, i, j)$ to be a special ribbon graph for $\cFill(M)_{ij}$.
In place of $M$, $R_t$, and $-R_s$ in Section \ref{subsec:explicit formula for tau(M)}, we just need to use $\cFill(M)_{ij}$, $R_t(i,j)$, and $-R_s(i,j)$.
As noted above, gluing handlebodies to fill the boundary of $\cFill(M)_{ij}$ produces $\Fill(M)$ with uncolored vertical bands are colored according to $(i,j)$.
The $(i,j)$-colored $\Fill(M)$ is denoted by $\Fill(M)_{ij}$ and the $(i, j)$-colored  standard ribbon graph $R(t,s)$ is denoted by $R(t,s)_{ij}$.
By changing to an equivalent manifold if necessarily, we may assume that a special ribbon graph for $(M, i,j)$, which is denoted by $\Omega_{(M, i,j)}$, is a disjoint union of the standard ribbon graph $R(t,s)_{ij}$ and a surgery link $L=L_1\cup \cdots\cup L_{\mu}$ and a ribbon graph of $M$.
The ribbon graph obtained by replacing $R(t,s)_{ij}$ by $R(t,s)$ is denoted by $\Omega_M$.
This ribbon graph $\Omega_M$ is thus obtained by removing the colors of left and right vertical bands.
Note that if $M$ has no corners then $\Omega_M$ is the same as the definition of special ribbon graph given in Section \ref{subsec:explicit formula for tau(M)}.
In summery we have the following explicit formula for the $(\zeta, \eta)$-block matrix 
\begin{align}\label{equ:tau zeta eta extended}
(\X(M)_{ij})_{\zeta}^{\eta}&=\tau(\cFill(M)_{ij})_{\zeta}^{\eta} \notag \\ 
&= \Delta^{\sigma(L)} \D^{-g^+ -\sigma(L) - \mu} \dim(\eta) \sum_{\lambda \in \col(L)} \dim(\lambda) \F_0(\Omega_{(M, i, j)}, \zeta, \eta, \lambda),
\end{align}
where $g^+$ is the sum of the integer entries of $s$ and 
\[\dim(\eta):=\prod_{i=1}^q\dim(\eta_i) \]
with
\[\dim(\eta_i):=
\begin{cases}
\prod_{l=1}^{b_i} \dim(\eta_i^l) & \mbox{ if } b_i \in \Z \\
1 & \mbox{ if } b_i \mbox{ is a mark.}
\end{cases}
\]

\section{Main Theorem}\label{sec:Main Theorem}
So far we defined the 2-category of decorated cobordisms with corners $\Co$, where cobordisms are decorated by a modular category $\V$.
We constructed the assignment $\X$ from $\Co$ to the Kapranov-Voevodsky 2-vector spaces $\KV$.
The explicit formula was obtained by expressing a cobordism with corners $M$ by the special ribbon graph $\Omega_M$ in $S^3$.
Now we prove the following main theorem of the current paper.
\begin{thm}\label{thm:main theorem}
The assignment $\X$ defined above is a projective pseudo 2-functor from $\Co$ to $\KV$.
\end{thm}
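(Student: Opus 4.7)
\medskip

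\noindent\textbf{Proof proposal.} The plan is to verify in sequence each piece of data and each axiom required of a projective pseudo 2-functor, reducing every claim to a computation of operator invariants of special ribbon graphs via the explicit formula~\eqref{equ:tau zeta eta extended}, and handling the anomaly scalars along the way. The main obstacle will be constructing the compositors for horizontal composition and proving their naturality, since this is where the non-triviality of the gluing of parametrized decorated cobordisms with corners interacts with the surgery presentation.

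First I would dispose of the easier data. For the vertical composition of 2-morphisms $[(M_1,\phi_1)]$ and $[(M_2,\phi_2)]$ with matching 1-morphisms $\Sigma_t\Rightarrow \Sigma_{t'} \Rightarrow \Sigma_{t''}$, I would observe that $\cFill(M_1\cdot M_2)_{ij}$ coincides with the vertical gluing of $\cFill(M_1)_{ij}$ and $\cFill(M_2)_{ij}$ as cobordisms of decorated Reshetikhin--Turaev type, because the solid cylinders $D_m$ and $D_n$ glue to produce the cylinders on the capped ends. Hence $\X([M_1]\cdot[M_2])_{ij}=\tau(\cFill(M_1\cdot M_2)_{ij})=\tau(\cFill(M_2)_{ij})\circ \tau(\cFill(M_1)_{ij})=\X([M_2])_{ij}\circ\X([M_1])_{ij}$, by the vertical gluing property of the original RT TQFT (possibly up to a gluing anomaly scalar, which is the projective part here). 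Preservation of the formal 2-identities $\id_{\Sigma_t}$ is immediate from the definition. Similarly, the unitors $\X(\id_n)\Rightarrow \id_{\X(\nstand{n})}$ are the identity 2-homomorphisms by construction.

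Next I would construct the compositors. For composable 1-morphisms $\Sigma_t\colon \nstand{l}\to\nstand{m}$ and $\Sigma_s\colon\nstand{m}\to\nstand{n}$, one must give a 2-isomorphism
\[
\mu_{t,s}\colon \X(\Sigma_t)\cdot \X(\Sigma_s)\xrightarrow{\ \sim\ } \X(\Sigma_{t\circ s}).
\]
The $(i,j)$-entry of the left side is $\bigoplus_{k\in I^m} \X(\Sigma_t)_{ik}\otimes_K \X(\Sigma_s)_{kj}$, whereas the $(i,j)$-entry of the right side is built from the type $t\circ s=(l,n;a_1,\dots,a_p,m-1,b_1,\dots,b_q)$ and so carries an extra block $H^{m-1}_{\kappa}$ summed over $\kappa\in I^{m-1}$. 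Applying Lemma~\ref{lem:sum over simple 2} iteratively along the $m$ bands that are joined by the composition, one identifies these two $K$-modules canonically. Concretely, each pair $(k,\kappa)\in I^m\times I^{m-1}$ labels a coloring of the newly produced rainbow block in $R_{t\circ s}$, and the splitting isomorphism $u_{\bullet}$ of Lemma~\ref{lem:sum over simple 2} is the very map that identifies a hom out of a tensor product $V_{k_1}^*\otimes V_{k_1}$ (etc.) with the hom out of $\1$ factored through $V_{i}\otimes V_i^*$; I would bookkeep the $\D^{\pm 1}$ and $\dim(\cdot)$ normalization factors that come from the $\eta(S)$ twist in the definition of $\tau$ so that $\mu_{t,s}$ is compatible with them up to an overall scalar depending only on $t$ and $s$.

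With the compositors in place, the heart of the proof is naturality: for horizontally composable 2-morphisms $[(M,\phi)]\colon \Sigma_{t_1}\Rightarrow \Sigma_{t_2}$ and $[(M',\phi')]\colon \Sigma_{s_1}\Rightarrow \Sigma_{s_2}$ one must verify
\[
\mu_{t_2,s_2}\circ\bigl(\X([M])\cdot \X([M'])\bigr)=c\cdot \X([M]\circ[M'])\circ \mu_{t_1,s_1},
\]
for a scalar $c$ independent of $M,M'$. I would approach this by representing both sides with special ribbon graphs. The horizontal composition $\cFill(M\circ M')_{ij}$ admits a special ribbon presentation obtained by concatenating the special ribbon graphs for $\cFill(M)_{ik}$ and $\cFill(M')_{kj}$ along the $m$ shared bands, inserting the coupling tensor predicted by Lemma~\ref{lem:sum over simple 2}. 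Formula~\eqref{equ:tau zeta eta extended} then gives both sides as sums over colorings of the same underlying graph and the same surgery link, so the equation reduces to the calculus identity that $(\id\otimes d_{V_k}\otimes \id)$ inserted between two sub-ribbons is the $F$-image of a cap joining the two. The $\D$- and $\Delta$-powers are tracked using $\sigma(L\sqcup L')=\sigma(L)+\sigma(L')$ and the count of new surgery components and coupons, producing a universal anomaly scalar. This is the step I expect to be the hardest, because handling the isotopy classes of parametrizations (via Lemma~\ref{lem:isotopy equivalence}) together with the bookkeeping of indices $\zeta,\eta,\kappa,\lambda$ in~\eqref{equ:tau zeta eta extended} is where the careful setup of $\Co$ earns its keep.

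Finally I would verify the coherence of $\mu$ and the unitors. The pentagon identity for $(\Sigma_{t}, \Sigma_{s}, \Sigma_{u})$ reduces, via the explicit description of $\mu$ above, to the two orders of applying Lemma~\ref{lem:sum over simple 2} at the two rainbow blocks arising in the triple composition; both orders correspond to choosing a different sequence of $(m{-}1)$- and $(m'{-}1)$-index summations, and a standard modular-category computation (using associativity of the splitting $u_i$) shows that the resulting isomorphisms agree up to a scalar. The left and right triangles follow from the fact that $\X(\id_n)$ is the identity 2-matrix, so $\mu_{t,\id_n}$ and $\mu_{\id_m,t}$ degenerate to the identity after unraveling~\eqref{equ:tsigma}. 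Taken together, these steps establish that $\X$ is a projective pseudo 2-functor, with an explicit anomaly scalar inherited from the gluing anomaly of the underlying Reshetikhin--Turaev theory.
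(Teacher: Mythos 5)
The overall structure you propose matches the paper: reduce vertical composition to the Reshetikhin--Turaev gluing anomaly, build compositors from the splitting isomorphism of Lemma \ref{lem:sum over simple 2}, and check naturality of the compositors by computing with special ribbon graphs through Formula \eqref{equ:tau zeta eta extended}. That part is fine, and you correctly locate the anomaly purely in vertical composition.

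The gap is in your treatment of horizontal composition. You assert that the special ribbon graph for $M\circ M'$ is obtained by ``concatenating the special ribbon graphs \ldots along the $m$ shared bands, inserting the coupling tensor predicted by Lemma \ref{lem:sum over simple 2},'' and then that ``both sides are sums over colorings of the same underlying graph and the same surgery link.'' Neither claim is correct as a topological statement, and this is precisely where the real work lies. Horizontal gluing identifies the $m$ cylindrical boundary components $\rightb M$ with $\leftb M'$; when you pass to filled cobordisms $\cFill(M)$, $\cFill(M')$, both cylinders have already been filled by solid cylinders $D_m$, so relating $\Fill(M)\,\#\,\Fill(M')$ to $\Fill(M\circ M')$ involves removing and regluing handlebodies, not inserting a cap. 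Concretely, the paper's Lemma \ref{lem:horizontal glue of ribbons} shows that the correct special ribbon graph $\Omega_{M,M'}$ is obtained by deleting the extremal uncolored vertical bands, juxtaposing, and then attaching the graph $\omega_{m-1}$, which carries $m-1$ \emph{new surgery annuli}. The two sides of your naturality equation therefore do not share the same surgery link; the ribbon graph for $M\circ M'$ has $m-1$ more components, and the $\D$-powers in \eqref{equ:tau zeta eta extended} change accordingly. The reason the ``coupling tensor'' picture eventually emerges is a separate non-trivial graphical lemma (the paper's Lemma \ref{lem:claim}, via Lemma \ref{lem:Fig310}): summing over the colors of the annuli in $\omega_{m-1}$ collapses it to a Kronecker delta $\zeta_3=\eta_3$ times $\D^{2(m-1)}$, which exactly cancels the new $\D^{-2(m-1)}$ from the surgery and coupon count. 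Omitting both the topological identification (Lemma \ref{lem:horizontal glue of ribbons}) and the graphical collapse (Lemma \ref{lem:claim}) means your argument never actually establishes the equality you claim to reduce to; it assumes the conclusion of the hardest step.

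Two smaller points. First, when you define the compositor $\mu_{t,s}$ you say to ``bookkeep the $\D^{\pm1}$ and $\dim(\cdot)$ normalization factors that come from the $\eta(S)$ twist in the definition of $\tau$ so that $\mu_{t,s}$ is compatible up to an overall scalar.'' The compositor is an isomorphism of 2-matrices built solely from Lemma \ref{lem:sum over simple 2}; no $\eta$-twist or $\D$-powers enter at that level, and the naturality in Proposition \ref{prop:2horizontal} is strict, not up to a scalar depending on types. Second, you should check the vertical composition order: with the paper's conventions $\tau(M_1\cdot M_2)$ factors as $\tau(M_1)$ followed by $\tau(M_2)$, not the reverse.
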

For our convention of the language of 2-category, see Appendix.
This theorem follows from several propositions below.
The idea of the proof is that we reduce the gluings of cobordisms to the gluing of special ribbon graphs and work with the explicit formula.

\subsection{Vertical projective Functor}

The vertical composition is not preserved by $\X$.
This is because of an anomaly in the Reshetikhin-Turaev TQFT.
Thus we instead claim that $\X$ is a projective functor on the hom-category $\Co(\nstand{m}, \nstand{n})$.
For a projective functor to exist, the target category should be an $K$-module for some commutative ring $K$.
In our case, the target category is the hom-category $\KV(\bfk^m, \bfk^n)$.
This hom-category  is a $K=\Hom(\1,\1)$-module by multiplying an element $k\in K$ component-wise:
\[ k\cdot (f_{ij})_{ij}:=(kf_{ij})_{ij},\]
where $(f_{ij})_{ij}$ is a 2-morphism in $\KV$.

First we state results regarding the anomaly of the original RT TQFT.
Let $M_1$ and $M_2$ be a composable decorated cobordisms (without corners).
(As always in this paper, we assume the source and the target boundary surfaces are both connected.)
Let $L_1$, $L_2$ and $L$ be surgery links for special ribbon graphs $\Omega_{M_1}$, $\Omega_{M_2}$ and $\Omega_{M_1\cdot M_2}$ of $M_1$ and $M_2$ and $M_1\cdot M_2$, respectively.

\begin{lemma}
Using notations above, the vertical concatenation $\Omega_{M_1}\cdot \Omega_{M_2}$ of the special ribbon graphs $\Omega_{M_1}$ and $\Omega_{M_2}$ is a special ribbon graph for $M_1 \cdot M_2$.
\end{lemma}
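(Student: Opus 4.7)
The strategy is to check that the natural stacking of the two graphs in a single $S^3$ satisfies the defining features of a special ribbon graph for $M_1\cdot M_2$. First I would place $\Omega_{M_1}$ in $\R^2\times[0,1]$ and $\Omega_{M_2}$ in $\R^2\times[1,2]$, both regarded as subsets of a single $S^3$, and join the loose band-ends along the interface $\R\times\{0\}\times\{1\}$. This joining is well defined because $-R_{t_2}=\mir(R_{t_2})$, so the free ends left by the removed coupon of $-R_{t_2}$ at the top of $\Omega_{M_1}$ match, in position, type, and color, the free ends left by the removed coupon of $R_{t_2}$ at the bottom of $\Omega_{M_2}$. After the join the resulting graph can be read piece by piece: at the bottom $R_{t_1}$ (coupon removed), at the top $-R_{t_3}$ (coupon removed), and in between the joined interface bands together with the original surgery sublinks $L_1$ and $L_2$.

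The next step is to analyze the joined interface bands by entry type of $t_2$. For each mark $(W,\nu)$ of $t_2$, the paired colored bands concatenate a free end of the ribbon graph of $M_1$ to the corresponding free end of the ribbon graph of $M_2$, recovering exactly the colored arc of the ribbon graph of $M_1\cdot M_2$ that crosses the identification surface $\Sigma_{t_2}$. For each integer entry of $t_2$, the paired rainbow arcs close up pairwise into unknotted loops lying in the interface region, and I would adjoin these loops to the surgery link with the $0$-framings inherited from the ribbon structure. After this reading the concatenated graph already has the correct combinatorial format of a special ribbon graph; what remains is the topological check that surgery on its link in $S^3$ recovers $\Fill(M_1\cdot M_2)$.

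This last check is where the main obstacle lies. Surgery on $L_1\cup L_2$ alone would yield only the connect-sum $\Fill(M_1)\#\Fill(M_2)$, because the sphere $\R^2\times\{1\}\cup\{\infty\}$ separates $L_1$ from $L_2$; but $\Fill(M_1\cdot M_2)$ is not this connect-sum: it is obtained instead by excising $U_{t_2}^-$ from $\Fill(M_1)$ and $U_{t_2}$ from $\Fill(M_2)$ and regluing the two remainders along the genus-$g$ surface $\Sigma_{t_2}$, where $g$ is the sum of integer entries of $t_2$, which is a Heegaard-type identification rather than a ball connect-sum. My plan for bridging this gap is to verify, via a Kirby-calculus argument, that $0$-surgery on the joined rainbow loops attaches precisely the $g$ one-handles needed to convert the ball connect-sum across the interface sphere into the desired handlebody gluing along $\Sigma_{t_2}$. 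Once this comparison is made, the augmented link presentation of $\Fill(M_1\cdot M_2)$ together with the ribbon structure identified above confirms that $\Omega_{M_1}\cdot\Omega_{M_2}$ is indeed a special ribbon graph for $M_1\cdot M_2$.
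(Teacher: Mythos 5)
Your structural setup is on the right track: stacking $\Omega_{M_1}$ below $\Omega_{M_2}$, matching free ends left by the deleted coupons of $-R_{t_2}$ and $R_{t_2}$, and recognizing that colored (mark) ends recombine into the ribbon graph of $M_1\cdot M_2$ while the uncolored rainbow arcs close into $g=\sum a_i$ unknotted, $0$-framed loops that must be adjoined to the surgery link. That is exactly the geometric content of the lemma, and the anomaly formula $(\D\Delta)^{\sigma(L_1)+\sigma(L_2)-\sigma(L)}$ in Lemma \ref{lem:vertical anomaly of original RT} is consistent with your picture (the new loops are mutually unlinked and $0$-framed, but may still be linked with $L_1$ and $L_2$). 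Note, however, that the paper does not supply an independent proof here; it points to Turaev's Lemma IV.2.1.2.

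The gap is that the step you call ``the main obstacle'' is precisely the substance of the lemma, and you leave it as a ``plan'' rather than carry it out. The claim that $0$-surgery on the joined rainbow loops turns $\Fill(M_1)\#\Fill(M_2)$ into the Heegaard-type regluing $(\Fill(M_1)\setminus U_{t_2}^-)\cup_{\Sigma_{t_2}}(\Fill(M_2)\setminus U_{t_2})$ requires genuine bookkeeping: one must show how each surgery converts the separating sphere $S=\R^2\times\{1\}\cup\{\infty\}$ into a genus-$g$ surface and simultaneously identifies the complements of the two handlebodies. The phrase ``attaches precisely the $g$ one-handles'' is also imprecise — integral Dehn surgery on a $3$-manifold is not a one-handle attachment (that language belongs to $4$-dimensional Kirby calculus); the relevant operation is the removal and regluing of a solid torus that straddles $S$, and its effect on the two sides must be tracked explicitly. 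The paper's own proof of the parallel Lemma \ref{lem:horizontal glue of ribbons} for horizontal composition carries out exactly this kind of surgery analysis (identifying the $E(r)$ and $F(r)$ solid cylinders and checking which boundaries get glued); a completed proof of the vertical lemma along your lines would need a comparable argument. As written, your proposal identifies the right approach and the right decomposition, but stops before the step that actually proves the statement.
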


\begin{lemma}\label{lem:vertical anomaly of original RT}
With the same notations as above, we have
\[\tau(M_1\cdot M_2)= k(M_1, M_2)\tau(M_1)\cdot \tau(M_2),\]
where
\[k(M_1, M_2)=(\D \Delta)^{\sigma(L_1)+\sigma(L_2)-\sigma(L)}. \]
\end{lemma}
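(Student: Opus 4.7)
The plan is to compare the explicit block-matrix formulas for both sides using the expansion (\ref{equ:tau zeta eta}), with the previous lemma (that $\Omega_{M_1}\cdot \Omega_{M_2}$ is a special ribbon graph for $M_1\cdot M_2$) as the main geometric input, and Lemma \ref{lem:sum over simple 2} as the main algebraic input.

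First, I would expand the matrix product
\[
(\tau(M_1)\cdot \tau(M_2))_{\zeta}^{\eta}=\sum_{\xi}\tau(M_1)_{\zeta}^{\xi}\cdot \tau(M_2)_{\xi}^{\eta},
\]
where $\xi$ ranges over colorings of the intermediate type $t_{\mathrm{mid}}$. Substituting (\ref{equ:tau zeta eta}) for each factor and collecting constants pulls out an overall prefactor $\Delta^{\sigma(L_1)+\sigma(L_2)}\D^{-g_{\mathrm{mid}}-g^{+}-\sigma(L_1)-\sigma(L_2)-m_1-m_2}\dim(\eta)$ multiplying a triple sum of the form
\[
\sum_{\xi}\dim(\xi)\sum_{\lambda_1,\lambda_2}\dim(\lambda_1)\dim(\lambda_2)\,F_0(\Omega_{M_1},\zeta,\xi,\lambda_1)\circ F_0(\Omega_{M_2},\xi,\eta,\lambda_2),
\]
with $g_{\mathrm{mid}}$ the genus sum of the intermediate type.

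Next, I would apply Lemma \ref{lem:sum over simple 2} to each simple-object strand running through the intermediate boundary surface $\Sigma_{t_{\mathrm{mid}}}$. The effect is that the weighted sum $\sum_{\xi}\dim(\xi)$ of compositions of operator invariants collapses to the operator invariant of the concatenated ribbon graph, namely $F_0(\Omega_{M_1}\cdot \Omega_{M_2},\zeta,\eta,\lambda_1\cup\lambda_2)$: each pair of matching cap and cup strands with a summed simple-object label with dimension weight is precisely the canonical isomorphism in the splitting lemma, which corresponds to joining the two strands in the concatenation.

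Finally, by the preceding lemma the concatenation $\Omega_{M_1}\cdot \Omega_{M_2}$ is a special ribbon graph for $M_1\cdot M_2$, with surgery link $L_1\cup L_2$ sitting in disjoint regions of $\R^2\times[0,1]\subset S^3$ so that $\sigma(L_1\cup L_2)=\sigma(L_1)+\sigma(L_2)$ and the component count is $m_1+m_2$. Hence formula (\ref{equ:tau zeta eta}) applied to this presentation also computes $\tau(M_1\cdot M_2)_{\zeta}^{\eta}$. Equating this with the value of (\ref{equ:tau zeta eta}) for the canonical presentation $L$, and using that $\tau$ is independent of the choice of surgery presentation, the two prefactors must match, which produces the correction $(\D\Delta)^{\sigma(L_1)+\sigma(L_2)-\sigma(L)}=k(M_1,M_2)$ when one compares with the prefactor appearing in the matrix-product expansion of Step 1.

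The main obstacle is the careful bookkeeping of the powers of $\D$ and $\Delta$: the genus contribution $\D^{-g_{\mathrm{mid}}}$ from the endomorphism $\eta(\Sigma_{t_{\mathrm{mid}}})$ applied at the intermediate surface, the surgery normalizations $\D^{-\sigma-m-1}$ for each link, and the anomaly $(\D\Delta)^{\sigma(L_1)+\sigma(L_2)-\sigma(L)}$ arising from the change of surgery presentation must all balance to give exactly the claimed formula. This signature defect is a manifestation of the Wall non-additivity of the signature under gluing of 4-manifolds and is the essential source of the gluing anomaly in the Reshetikhin--Turaev theory.
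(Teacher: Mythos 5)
The paper's ``proof'' of this lemma consists only of a citation to \cite[Lemma IV.2.1.2]{Turaev10}, so you are actually being asked to supply the argument yourself, and your overall plan (expand the block matrix product via (\ref{equ:tau zeta eta}), relate it to the special ribbon graph presentation of $M_1\cdot M_2$, and compare prefactors) is the right general shape. However, there is a concrete error in your Step 3 that breaks the bookkeeping.

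You assert that the concatenation $\Omega_{M_1}\cdot\Omega_{M_2}$ has surgery link $L_1\sqcup L_2$ with $m_1+m_2$ components. That is false. When you stack $\Omega_{M_2}$ on top of $\Omega_{M_1}$, the cup-like rainbow bands of $-R_{t_2}$ at the top of $\Omega_{M_1}$ join the cap-like rainbow bands of $R_{t_2}$ at the bottom of $\Omega_{M_2}$ to form closed annuli. These annuli are new surgery components. (The paper itself uses this in the proof of Lemma \ref{lem:vertical anomaly associativity}, where it explicitly introduces ``$L_{12}$ \dots the newly emerged ribbons when we concatenate.'') So the surgery link of $\Omega_{M_1}\cdot\Omega_{M_2}$ is $L=L_1\cup L_2\cup L_{12}$ with $m=m_1+m_2+g_{\mathrm{mid}}$ components, where $g_{\mathrm{mid}}$ is the genus of the intermediate surface. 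Your ``canonical presentation $L$'' versus ``this presentation $L_1\cup L_2$'' dichotomy does not exist: by Lemma \ref{lem:2vertical composition} the symbol $L$ \emph{is} the surgery link of the concatenation, so there is only one presentation in play, and the anomaly is not a change-of-presentation effect at all.

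With the correct identification, the argument is more elementary than you suggest and does not use Lemma \ref{lem:sum over simple 2} (that lemma is the key input for \emph{horizontal} composition in Proposition \ref{prop:2horizontal}). You only need that $F$ is a functor on $\Rib$, so that $F_0(\Omega_{M_2},\xi,\eta,\lambda_2)\circ F_0(\Omega_{M_1},\zeta,\xi,\lambda_1)=F_0(\Omega_{M_1}\cdot\Omega_{M_2},\zeta,\eta,(\lambda_1,\lambda_2,\xi))$, where the extra argument $\xi$ is precisely the coloring of the new annuli $L_{12}$. Then the weighted sum $\sum_{\xi}\dim(\xi)\sum_{\lambda_1,\lambda_2}\dim(\lambda_1)\dim(\lambda_2)$ in the matrix product is \emph{identical} to the sum $\sum_{\lambda\in\col(L)}\dim(\lambda)$ in the direct computation of $\tau(M_1\cdot M_2)$ via (\ref{equ:tau zeta eta}); nothing collapses or is repackaged. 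Comparing prefactors, the $\D$-exponents balance exactly because of the $g_{\mathrm{mid}}$ extra components: $-g^{+}-m+g_{\mathrm{mid}}+g^{+}+m_1+m_2=0$ once $m=m_1+m_2+g_{\mathrm{mid}}$, and the only residue is the signature discrepancy, which gives the anomaly. (Note that your prefactor from Step 1 has a leftover $\D^{-g_{\mathrm{mid}}}$ that, in your account, never cancels against anything on the other side precisely because you dropped the $|L_{12}|=g_{\mathrm{mid}}$ components; that is the symptom of the gap.) The closing remark about Wall non-additivity is apt as motivation but does not repair the computation.
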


The proofs of both lemmas can be found in \cite[Lemma I\hspace{-.1em}V 2.1.2]{Turaev10}.

\begin{prop}[Vertical composition]\label{lem:2vertical composition}
Let $[(M_1, \phi_1)]:\Sigma_{t_1}\Rightarrow \Sigma_{t_2}: \nstand{m} \to \nstand{n}$ and $[(M_2, \phi_2)]:\Sigma_{t_2}\Rightarrow \Sigma_{t_3}: \nstand{m} \to \nstand{n}$ be (non-formal) 2-morphisms of $\Co$ so that the target 1-morphism of $[M_1]$ is equal to the source 1-morphism of $[M_2]$.
Then we have 
\[\X(M_1\cdot M_2)=k(M_1, M_2) \X(M_1)\cdot \X(M_2),\]
 where $k(M_1, M_2) \in K$ is a gluing anomaly of the pair $(M_1, M_2)$ given as follows.
Let $L_1$, $L_2$ and $L$ be surgery links of $\Omega_{M_1}$, $\Omega_{M_2}$ and $\Omega_{M_1} \cdot \Omega_{M_2}$, respectively.
Then 
\[k(M_1, M_2)=(\D \Delta)^{\sigma(L_1)+\sigma(L_2)-\sigma(L)}.\]
\end{prop}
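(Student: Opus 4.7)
The plan is to reduce the claim to Lemma \ref{lem:vertical anomaly of original RT}, the known vertical gluing anomaly for the corner-free Reshetikhin--Turaev theory. Since $\X(M)_{ij}=\tau(\cFill(M)_{ij})$ and vertical composition of $2$-matrices is taken entry-by-entry, it suffices to verify, for each fixed $(i,j)\in I^m\times I^n$, that
\[
\tau\bigl(\cFill(M_1\cdot M_2)_{ij}\bigr) \;=\; k(M_1,M_2)\,\tau\bigl(\cFill(M_1)_{ij}\bigr)\cdot \tau\bigl(\cFill(M_2)_{ij}\bigr).
\]

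First I would establish a $d$-homeomorphism
\[
\cFill(M_1\cdot M_2)_{ij} \;\cong\; \cFill(M_1)_{ij}\cdot \cFill(M_2)_{ij},
\]
where the right-hand side is the ordinary vertical composition of RT-type cobordisms along the parametrized capped boundary $\hat\Sigma_{t_2}$. By construction $M_1\cdot M_2$ is obtained from $M_1$ and $M_2$ by gluing along $\phi_2\circ \mir^{-1}\circ\phi_1^{-1}$, while each $D_m$ glued on the left of $M_i$ contributes a pair of disks to the interface. Stacking the $D_m$ on the top of $\cFill(M_1)$ to the $D_m$ on the bottom of $\cFill(M_2)$ along these disks produces a single solid cylinder, and an analogous argument for $D_n$ handles the right side. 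The role of the stretching map $s$ appearing in (\ref{equ:zeta vertical gluing map}) in defining $(\phi_1\cdotv\phi_2)_L$ is precisely to identify the resulting stacked cylinder with the standard $C(m)$-collar of the boundary of $M_1\cdot M_2$; after filling, this reparametrization becomes an ambient collar move, giving the $d$-homeomorphism above up to Lemma \ref{lem:isotopy equivalence}. Coloring the uncolored vertical bands in $D_m$ and $D_n$ according to $i$ and $j$ is compatible with both constructions since the colors depend only on the entry $(i,j)$ and not on how the cylinders are assembled.

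Next I would identify surgery links. Cylinder filling is a topological operation which does not create new surgery components, so if $L_1,L_2,L$ are surgery links for the special ribbon graphs $\Omega_{M_1},\Omega_{M_2},\Omega_{M_1\cdot M_2}$ respectively, then the same links serve as surgery links for $\cFill(M_1)_{ij}, \cFill(M_2)_{ij}, \cFill(M_1\cdot M_2)_{ij}$. By the lemma preceding Lemma \ref{lem:vertical anomaly of original RT}, the vertical concatenation of the special ribbon graphs for $\cFill(M_1)_{ij}$ and $\cFill(M_2)_{ij}$ is a special ribbon graph for the vertical composite, with surgery link $L_1\sqcup L_2$ (situated in two stacked copies of $S^3$), which therefore represents the same surgered closed $3$-manifold as $L$. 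Applying Lemma \ref{lem:vertical anomaly of original RT} to the composable pair $\cFill(M_1)_{ij},\cFill(M_2)_{ij}$ and combining with the $d$-homeomorphism of the previous step yields the desired identity with anomaly
\[
k(M_1,M_2)\;=\;(\D\Delta)^{\sigma(L_1)+\sigma(L_2)-\sigma(L)}.
\]

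The main obstacle is the careful bookkeeping of parametrizations in the first step: one must show that the stretching map $s$ on the side boundaries, although not the identity, becomes innocuous once those side boundaries are filled by $D_m$ and $D_n$, because the filled-in cylinders absorb any collar reparametrization. Invoking Lemma \ref{lem:isotopy equivalence} lets us replace $(\phi_1\cdotv\phi_2)_L$ by any parametrization isotopic to it on the capped boundary, reducing the compatibility check to a purely topological gluing statement. Everything else is a direct translation of the known RT anomaly into the matrix-entry language of $\X$.
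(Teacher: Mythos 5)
Your proof takes essentially the same route as the paper: reduce to the $(i,j)$-entry identity $\tau\bigl(\cFill(M_1\cdot M_2)_{ij}\bigr)=k(M_1,M_2)\,\tau\bigl(\cFill(M_1)_{ij}\bigr)\circ\tau\bigl(\cFill(M_2)_{ij}\bigr)$, observe that filling the corners commutes with vertical gluing and that the surgery links are unaffected by the $(i,j)$-coloring of the vertical bands, and then invoke Lemma~\ref{lem:vertical anomaly of original RT}. The only difference is that you spell out the collar/stretching-map bookkeeping behind $\cFill(M_1\cdot M_2)_{ij}\cong\cFill(M_1)_{ij}\cdot\cFill(M_2)_{ij}$ and its independence from Lemma~\ref{lem:isotopy equivalence}, a step the paper states without elaboration.
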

\begin{proof}
It suffices to show that the equality
\begin{equation*}\label{equ:XMM}
 \X(M_1 \cdot M_2)_{ij} =k(M_1, M_2) \left( \X(M_1) \cdot \X(M_2) \right)_{ij}
\end{equation*}
holds for $i\in I^m$ and $j\in I^n$.
Note that the surgery links $L_1$, $L_2$ and $L$ are independent of the indices $i$ and $j$, so is $k(M_1, M_2$), hence the result follows.
By the definition of $\X(M)_{ij}$ (see (\ref{equ:XM ij})), the above equality is equivalent to show the following equality:
\begin{equation}\label{equ:tau fill MM}
 \tau\left(\cFill(M_1 \cdot M_2\right)_{ij})=k(M_1, M_2) \tau\left(\cFill(M_1)_{ij}\right) \circ \tau\left(\cFill(M_2)_{ij}\right).
\end{equation}
Since filling corners and vertical gluing commute, we have
\[\cFill(M_1 \cdot M_2)_{ij} = \cFill(M_1)_{ij} \cdot \cFill(M_2)_{ij}.\]
By definition, the special ribbon graphs $(\Omega_{M_1})_{ij}$ and $(\Omega_{M_2})_{ij}$ represent the cobordisms $\cFill(M_1)_{ij}$ and $\cFill(M_2)_{ij}$, respectively.
Note that the surgery links of $(\Omega_{M_1})_{ij}$ and $(\Omega_{M_2})_{ij}$ are the same as the surgery links of $\Omega_{M_1}$ and $\Omega_{M_2}$  since only difference between them is the colors of the left and the right vertical bands, which are not surgery links.
Thus, the equality (\ref{equ:tau fill MM}) follows from Lemma \ref{lem:vertical anomaly of original RT}.
\end{proof}
If one of $M_1$ and $M_2$ is the vertical identity, then we set $k(M_1\cdot M_2)=1 \in K$.

\begin{lemma}\label{lem:vertical anomaly associativity}
Suppose that $M_1$, $M_2$, and $M_3$ are three vertically composable 2-morphisms of $\Co$.
Namely, we can form the 2-morphism $M_1 \cdot M_2 \cdot M_3$.
Then we have
\begin{equation}\label{equ:vertical anomaly}
k(M_1, M_2 \cdot M_3)k(M_2\cdot M_3)=k(M_1 \cdot M_2, M_3)k(M_1, M_2).
\end{equation}
\begin{proof}
 For $i=1,2,3$ present the cobordisms $M_i$ by a special ribbon graph $\Omega_{M_i}$ and let $L_i$ be the surgery link in $\Omega_i$.
The cobordism $M_1\cdot M_2$ is represented by the special ribbon graph $\Omega_{M_1} \cdot \Omega_{M_2}$.
Let $L_{12}$ be a part of the surgery link of $\Omega_{M_1} \cdot \Omega_{M_2}$ that is not in $L_1 \cup L_2$.
Namely, the surgery link $L_{12}$ is a newly emerged ribbons when we concatenate the ribbons $\Omega_{M_1}$ and $\Omega_{M_2}$.
Similarly, let $L_{23}$ be the surgery link that is not in $L_2 \cup L_3$.
Let $L_{123}$ be the surgery link of $\Omega_{M_1}\cdot \Omega_{M_2} \cdot \Omega_{M_3}$, namely $L_{123}$ is the union of all of the above surgery links.

By Lemma \ref{lem:2vertical composition}, anomalies can be computed by signatures of surgery links.
Thus, the equality \ref{equ:vertical anomaly} is equivalent to the equality
\begin{align*}
[\sigma(L)+\sigma(L_3)-\sigma(L_{123})] +[ \sigma(L_1)+\sigma(L_2)-\sigma(L)]\\
=[\sigma(L_1)+\sigma(L')-\sigma(L_{123})]+[\sigma(L_2) +\sigma(L_3)-\sigma(L')].
\end{align*}
Since both sides are equal to $\sigma(L_1)+\sigma(L_2)+\sigma(L_3)-\sigma(L_{123})$, the equality holds.
\end{proof}

\end{lemma}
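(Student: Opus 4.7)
The plan is to reduce the identity to an elementary bookkeeping among signatures of surgery links, using Proposition~\ref{lem:2vertical composition} as the sole analytic input. First I would fix special ribbon graph representatives $\Omega_{M_i}$ for each $M_i$ with surgery link $L_i$ ($i=1,2,3$). Since the lemma preceding Proposition~\ref{lem:2vertical composition} guarantees that vertical concatenation of special ribbon graphs represents the vertical composite, I can name the remaining surgery links unambiguously: let $L$ be the surgery link of $\Omega_{M_1}\cdot \Omega_{M_2}$, let $L'$ be the surgery link of $\Omega_{M_2}\cdot \Omega_{M_3}$, and let $L_{123}$ be the surgery link of the triple concatenation $\Omega_{M_1}\cdot \Omega_{M_2}\cdot \Omega_{M_3}$.

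Next I would apply Proposition~\ref{lem:2vertical composition} four times to rewrite each anomaly as a power of $\D\Delta$:
\begin{align*}
k(M_1,M_2) &= (\D\Delta)^{\sigma(L_1)+\sigma(L_2)-\sigma(L)}, & k(M_2,M_3) &= (\D\Delta)^{\sigma(L_2)+\sigma(L_3)-\sigma(L')}, \\
k(M_1\cdot M_2,M_3) &= (\D\Delta)^{\sigma(L)+\sigma(L_3)-\sigma(L_{123})}, & k(M_1,M_2\cdot M_3) &= (\D\Delta)^{\sigma(L_1)+\sigma(L')-\sigma(L_{123})}.
\end{align*}
Because $\D\Delta$ is invertible in $K$, the multiplicative identity to be proved is equivalent to the additive identity among the corresponding exponents. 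Multiplying out the left-hand side, the contribution $\sigma(L')$ cancels between $k(M_1,M_2\cdot M_3)$ and $k(M_2,M_3)$, leaving $\sigma(L_1)+\sigma(L_2)+\sigma(L_3)-\sigma(L_{123})$. Multiplying out the right-hand side, the contribution $\sigma(L)$ cancels between $k(M_1\cdot M_2,M_3)$ and $k(M_1,M_2)$, leaving the same expression. Hence the two sides agree.

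The one subtlety I anticipate is justifying that $L_{123}$ is unambiguously associated to the triple composite, i.e.\ the same surgery link arises whether one first forms $\Omega_{M_1}\cdot \Omega_{M_2}$ and then attaches $\Omega_{M_3}$, or first forms $\Omega_{M_2}\cdot \Omega_{M_3}$ and then attaches $\Omega_{M_1}$. This is where I would invoke the associativity of vertical composition of cobordisms, already established on the equivalence-class level earlier in the section: both parenthesizations produce homeomorphic filled manifolds, and the surgery presentation is only used up to homeomorphism in the definition of the $\tau$-invariant. Once that observation is in hand, the entire argument is a one-line exponent calculation and the conclusion follows.
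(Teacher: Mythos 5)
Your proposal follows the same route as the paper: express each anomaly as $(\D\Delta)$ raised to the signed combination of signatures of surgery links (via Proposition~\ref{lem:2vertical composition}), and observe that both exponent sums collapse to $\sigma(L_1)+\sigma(L_2)+\sigma(L_3)-\sigma(L_{123})$. Your extra remark on why $L_{123}$ is well-defined is a reasonable point of care, though in practice concatenation of special ribbon graphs in $\R^3$ is strictly associative as a geometric operation, so the paper rightly takes this for granted.
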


The results of Proposition \ref{lem:2vertical composition} and Lemma \ref{lem:vertical anomaly associativity} can be summarized into:
\begin{prop}\label{prop:vertical projective functor}
The assignment $\X$ is a projective functor from the hom-category $\Co(\nstand{m}, \nstand{n})$ to the hom-category $\KV(\bfk^m, \bfk^n)$.
\end{prop}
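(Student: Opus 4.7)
The plan is straightforward: this proposition is a packaging result, so the proof amounts to checking each clause in the definition of a projective functor and citing the two previous results. Recall that a projective functor between $K$-linear categories assigns morphisms to morphisms, sends identities to identities (with scalar $1$), preserves composition up to a nonzero scalar $k(f,g)\in K$, and these scalars must satisfy a cocycle condition compatible with associativity of composition.

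First I would check the object-level and morphism-level assignments. Objects of $\Co(\nstand{m},\nstand{n})$ are standard surfaces $\Sigma_t$ plus formal identities $\id_n$, and $\X$ was defined on each in Section \ref{sec:X on 1-morphisms} to produce a $\bfk^m\times\bfk^n$ 2-matrix in $\KV$, which is an object of $\KV(\bfk^m,\bfk^n)$. Morphisms of $\Co(\nstand{m},\nstand{n})$ are classes $[(M,\phi)]$ of decorated cobordisms together with formal identity 2-morphisms; $\X$ was defined on these via equation \eqref{equ:XM ij}, producing 2-homomorphisms of the correct shape.

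Second, I would verify that $\X$ preserves identities. For each formal identity 2-morphism $\id_{\Sigma_t}$, the assignment $\X$ was explicitly defined to be the identity 2-homomorphism on $\X(\Sigma_t)$, and we declared $k(M,\id)=k(\id,M)=1$ after Proposition \ref{lem:2vertical composition}. Thus identities are preserved strictly.

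Third, the projective composition identity
\[
\X(M_1\cdot M_2) \;=\; k(M_1,M_2)\,\X(M_1)\cdot\X(M_2),
\]
with $k(M_1,M_2)=(\D\Delta)^{\sigma(L_1)+\sigma(L_2)-\sigma(L)}\in K$ nonzero (since both $\D$ and $\Delta$ are invertible in $K$), is precisely Proposition \ref{lem:2vertical composition}; the compatibility of these scalars with triple composition, i.e.\ the cocycle condition
\[
k(M_1,M_2\cdot M_3)\,k(M_2,M_3) \;=\; k(M_1\cdot M_2,M_3)\,k(M_1,M_2),
\]
is exactly Lemma \ref{lem:vertical anomaly associativity}. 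Combining these four steps yields the proposition. There is essentially no obstacle left: the only subtlety worth flagging is the separate treatment of formal identities, which must be handled by convention rather than by the ribbon-graph formula, but having declared $k$ to be $1$ when either argument is a formal identity, both the composition identity and the cocycle condition hold trivially in those cases.
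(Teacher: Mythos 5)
Your proof is correct and takes essentially the same route as the paper: the paper presents this proposition as a direct summary of Proposition \ref{lem:2vertical composition} and Lemma \ref{lem:vertical anomaly associativity}, and your write-up cites precisely those two results for the projectivity and cocycle axioms, adding the (routine but worth stating) checks that $\X$ is defined on objects and morphisms of the hom-categories and that formal identities are sent to identities with trivial anomaly.
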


\subsection{Horizontal Axioms}
Now we are going to study how the assignment $\X$ behaves on horizontal gluings.
For each type $t=(m,n; a_1, \cdots, a_p)$, recall the following notations from Section \ref{sec:X on 1-morphisms}:
 \begin{equation*}
\Phi(t; \zeta)= H^{a_1}_{\zeta_1} \otimes H^{a_2}_{\zeta_2} \otimes \cdots \otimes H^{a_p}_{\zeta_p}
\end{equation*}
and
\begin{equation*}
\Phi(t; \zeta; i, j)=V^*_{i_1} \otimes V^*_{i_2} \otimes \cdots \otimes V^*_{i_m} \otimes \Phi(t, \zeta) \otimes V_{j_n}\otimes V_{j_{n-1}}\otimes \cdots \otimes V_{j_1}.
\end{equation*}
Also recall that we defined the module
\begin{equation*}
 \X(\Sigma_t)_{ij}=\bigoplus_{\zeta \in I^t} \Hom \big(\1, \Phi(t; \zeta;i,j) \big)
\end{equation*}

\begin{prop}\label{prop:2-functor on 1-morphisms}
Let $\Sigma_{t_1}: l \to m$ and $\Sigma_{t_2}:m \to n$ be composable 1-morphisms of $\Co$. 
Then  the 2-matrix $\X(\Sigma_{t_1}\circ \Sigma_{t_2})$ is canonically isomorphic to the 2-matrix $\X(\Sigma_{t_1})\X(\Sigma_{t_2})$.
\end{prop}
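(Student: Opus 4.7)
The plan is to unpack both sides from the definitions in Section \ref{sec:X on 1-morphisms} and identify them via a single application of Lemma \ref{lem:sum over simple 2}. Writing $t_1 = (l, m; a_1, \ldots, a_p)$ and $t_2 = (m, n; b_1, \ldots, b_q)$, the composite type is $t_1 \circ t_2 = (l, n; a_1, \ldots, a_p, m-1, b_1, \ldots, b_q)$. For $i \in I^l$ and $j \in I^n$ I would first expand the $(i,j)$-entry of the matrix product using the definition of $\X(\Sigma_t)_{ij}$ in (\ref{equ:tsigma}):
\[(\X(\Sigma_{t_1})\X(\Sigma_{t_2}))_{ij} = \bigoplus_{k \in I^m,\ \zeta \in I^{t_1},\ \eta \in I^{t_2}} \Hom\bigl(\1,\, \Phi(t_1; \zeta; i, k)\bigr) \otimes_K \Hom\bigl(\1,\, \Phi(t_2; \eta; k, j)\bigr),\]
and compare it with
\[\X(\Sigma_{t_1 \circ t_2})_{ij} = \bigoplus_{\zeta \in I^{t_1},\ \omega \in I^{m-1},\ \eta \in I^{t_2}} \Hom\bigl(\1,\, \Phi(t_1 \circ t_2; (\zeta, \omega, \eta); i, j)\bigr).\]

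Next I would exploit the ordering conventions on the standard surfaces: the factor $\Phi(t_1; \zeta; i, k)$ ends on the right with $V_{k_1}$ while $\Phi(t_2; \eta; k, j)$ begins on the left with $V^*_{k_1}$, so the $V_{k_1}$/$V^*_{k_1}$ pair lies adjacent after tensoring the two arguments. Splitting off the $k_1$-summation and applying Lemma \ref{lem:sum over simple 2} in reverse (with the summation index corresponding to $V^*_{k_1}$, using $V^{**} \cong V$ and the involution $*: I \to I$), the inner sum collapses:
\[\bigoplus_{k_1 \in I} \Hom\bigl(\1,\, V \otimes V_{k_1}\bigr) \otimes_K \Hom\bigl(\1,\, V^*_{k_1} \otimes W\bigr) \;\cong\; \Hom\bigl(\1,\, V \otimes W\bigr),\]
where $V = V^*_{i_1} \otimes \cdots \otimes V^*_{i_l} \otimes \Phi(t_1; \zeta) \otimes V_{k_m} \otimes \cdots \otimes V_{k_2}$ and $W = V^*_{k_2} \otimes \cdots \otimes V^*_{k_m} \otimes \Phi(t_2; \eta) \otimes V_{j_n} \otimes \cdots \otimes V_{j_1}$. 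The factors $V_{k_m} \otimes \cdots \otimes V_{k_2} \otimes V^*_{k_2} \otimes \cdots \otimes V^*_{k_m}$ sitting between $\Phi(t_1; \zeta)$ and $\Phi(t_2; \eta)$ in the resulting Hom-space then recombine into $H^{m-1}_\omega$ after the relabeling $\omega_\ell := k_{m+1-\ell}$, and summation over the remaining indices $k_2, \ldots, k_m$ becomes summation over $\omega \in I^{m-1}$. Comparing term by term yields a canonical $K$-linear isomorphism of the $(i,j)$-entries, assembling into the required 2-isomorphism of $\bfk^l \times \bfk^n$ 2-matrices.

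The formal identity 1-morphisms $\id_n$ are handled separately but trivially, since $\X(\id_n)$ is defined to be the identity 2-matrix and composition with $\id_n$ acts as the identity at the type level. The main obstacle I anticipate is purely bookkeeping: one must verify carefully that the conventions for how left and right boundary circles are ordered produce the $V_{k_1}$/$V^*_{k_1}$ adjacency that makes the splitting lemma directly applicable, and that the relabeling $\omega_\ell = k_{m+1-\ell}$ yields the correct $H^{m-1}_\omega$ in the correct order of factors. Canonicity of the resulting 2-isomorphism then follows from the canonicity of the splitting (\ref{equ: cap isom}) in Lemma \ref{lem:sum over simple 2}, so no auxiliary choices enter.
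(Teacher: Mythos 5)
Your proof follows essentially the same route as the paper's: expand both $(h,j)$-entries from the definition (\ref{equ:tsigma}), collapse the sum over the first coordinate of the middle index via a single application of Lemma \ref{lem:sum over simple 2}, and then relabel the surviving indices to reassemble the factor $H^{m-1}_\omega$ corresponding to the inserted entry $m-1$ in $t_1\circ t_2$. You are in fact somewhat more careful than the paper's proof at one small point: the paper simply says ``we sum over $i_1$,'' whereas Lemma \ref{lem:sum over simple 2} as stated requires the adjacent pair $V_i^*\,/\,V_i$, while the expansion produces $V_{k_1}\,/\,V^*_{k_1}$, so the reindexing via the involution $*:I\to I$ that you make explicit is a genuine (if minor) gap-filling step.
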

\begin{proof}
The $(h, j)$-component of the product of the 2-matrices $\X(\Sigma_{t_1})$ and $\X(\Sigma_{t_2})$ is the module
\begin{align}\label{equ:product of 2matrices}
\notag  \left( \X(\Sigma_{t_1}) \circ \X(\Sigma_{t_2}) \right)_{hj}=\bigoplus_{1 \leq i \leq \bfk^m} \X(\Sigma_{t_1})_{h, i}\otimes \X(\Sigma_{t_2})_{i, j}\\ 
=  \bigoplus_{1 \leq i \leq \bfk^m} \left[  \bigoplus_{\zeta \in I^{t_1}} \Hom \big(\1, \Phi(t_1; \zeta;h,i) \big) \otimes \bigoplus_{\eta \in I^{t_2}} \Hom \big(\1, \Phi(t_2; \eta;i,j) \big)  \right]
\end{align}
Using Lemma \ref{lem:sum over simple 2} we sum over $i_1$ and the module (\ref{equ:product of 2matrices}) is isomorphic to 

\begin{equation}\label{equ:product of 2matrices second}
\bigoplus_{i=(i_2, \dots, i_{m-1})\in I^{m-1}} \bigoplus_{\zeta \in I^{t_1}, \eta \in I^{t_2}}\Hom\left(\1, U(i, \zeta, \eta) \right),
\end{equation}
where $U(i, \zeta, \eta)$ is the following module.
\begin{multline*}
V^*_{h_1}\otimes \cdots \otimes V^*_{h_l} \otimes \Phi(t_1, \zeta) \\
\otimes V_{i_m}\otimes V_{i_{m-1}}\otimes \cdots \otimes V_{i_{2}} 
\otimes V_{i_{2}}^*\otimes V_{i_{3}}^*
\otimes \cdots \otimes V_{i_m}^* \\
\otimes \Phi(t_2, \eta) \otimes V_{j_n}\otimes \cdots \otimes V_{1}
\end{multline*}

Note that we have the equality
\[\bigoplus_{i=(i_2, \dots, i_{m})\in I^{m-1}} \bigoplus_{\zeta \in I^{t_1}, \eta \in I^{t_2}}=\bigoplus_{\xi \in I^{t_1 \circ t_2} }\]
and for $\xi=(\zeta, i, \eta)\in I^{t_1\circ t_2}$ with $\zeta \in I^{t_1}, i\in I^{m-1}, \eta \in I^{t_2}$, the object $\Phi(t_1 \circ t_2, \xi)$ is equal to
\[\Phi(t_1, \zeta)\otimes V_{i_m}\otimes V_{i_{m-1}}\otimes \cdots \otimes V_{i_{2}} 
\otimes V_{i_{2}}^*\otimes V_{i_{3}}^*
\otimes \cdots \otimes V_{i_m}^* 
\otimes \Phi(t_2, \eta)\]
Thus the module (\ref{equ:product of 2matrices second}) is equal to the module
\begin{align*}
&\bigoplus_{\xi \in I^{t_1 \circ t_2} }\Hom(\1, V^*_{h_1}\otimes \cdots \otimes V^*_{h_l}\otimes \Phi(t_1 \circ t_2, \xi) \otimes V_{j_n}\otimes \cdots \otimes V_{1}) \\
&=\bigoplus_{\xi \in I^{t_1 \circ t_2} }\Hom(\1, \Phi(t_1\circ t_2; \xi; h,j)) =\X(\Sigma_{t_1\circ t_2})_{hj}
\end{align*}

Note that the isomorphism from $\X(\Sigma_{t_1}) \circ \X(\Sigma_{t_2})$ to $\X(\Sigma_{t_1\circ t_2})$ is given by the isomorphism $u$ of Lemma \ref{lem:sum over simple 2}.
This fact will be used in the proof of Lemma \ref{prop:2horizontal} below.
\end{proof}

We saw that vertical composition of cobordisms corresponds to concatenation of their special ribbon graphs.
This correspondence was the key observation to prove the projective functoriality of $\X$.
Similarly, to investigate how  horizontal composition behave under the map $\X$, we first need to study how  horizontal composition of cobordisms can be expressed as an operation on the special ribbon graph side.
The obvious guess is to juxtapose two special ribbon graphs.
But juxtaposing does not correspond to horizontal composition of cobordisms.
This can be seen, for instance, by noting that the type of bottom surface is not the desired one.

Let $[M]: \Sigma_{t_1} \Rightarrow \Sigma_{t_2}: \nstand{l} \to \nstand{m}$ and $[M']:\Sigma_{s_1} \Rightarrow \Sigma_{s_2}: \nstand{m}\to \nstand{n}$ be 2-morphisms which can be glued horizontally.
Let $\Omega_M$ and $\Omega_{M'}$ be special ribbon graphs representing the cobordisms $M$ and $M'$, respectively.
Recall that the special ribbon graph $\Omega_M$ consists of ribbons from $R_{t_1}$ and $-R_{t_2}$ with uncolored vertical bands connected, and a surgery link.
The surgery link may be tangled with $R_{t_1}$ and $-R_{t_2}$ as in Figure \ref{fig:OmegaM}.

\begin{figure}[h]
\center
\includegraphics[width=3.8in]{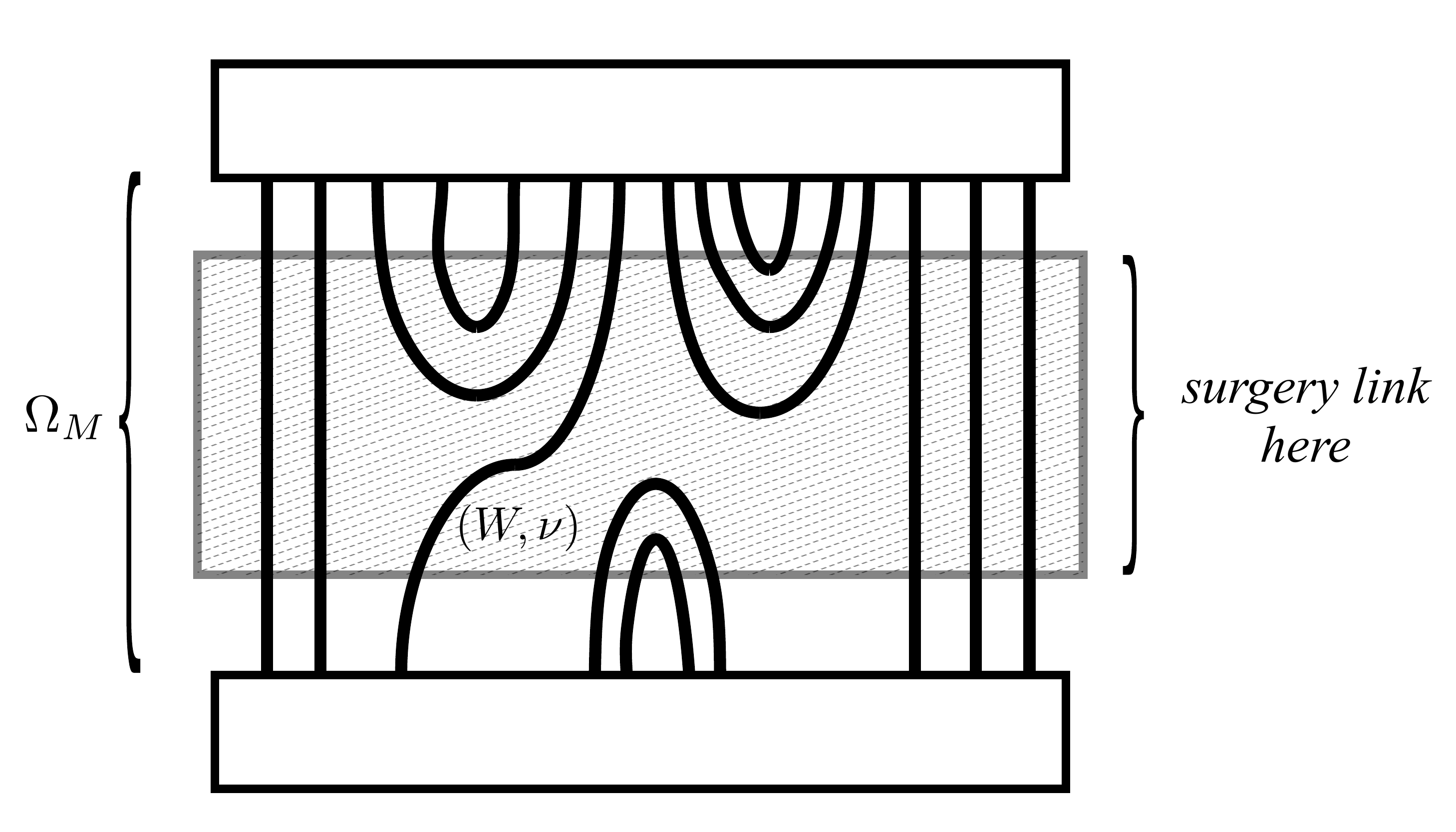}
\caption{The special ribbon graph $\Omega_M$}
\label{fig:OmegaM}
\end{figure}


We may assume that the surgery link in $\Omega_M$ is away from the rightmost vertical band of $\Omega_M$ by pulling a component of the surgery link over the top coupon and bring it to the other side.
Similarly, we may assume that no component of surgery link in $\Omega_{M'}$ is tangled with the leftmost uncolored vertical band of $\Omega_{M'}$ as in Figure \ref{fig:nosurgerylink}.

\begin{figure}[h]
\center
\includegraphics[width=4.8in]{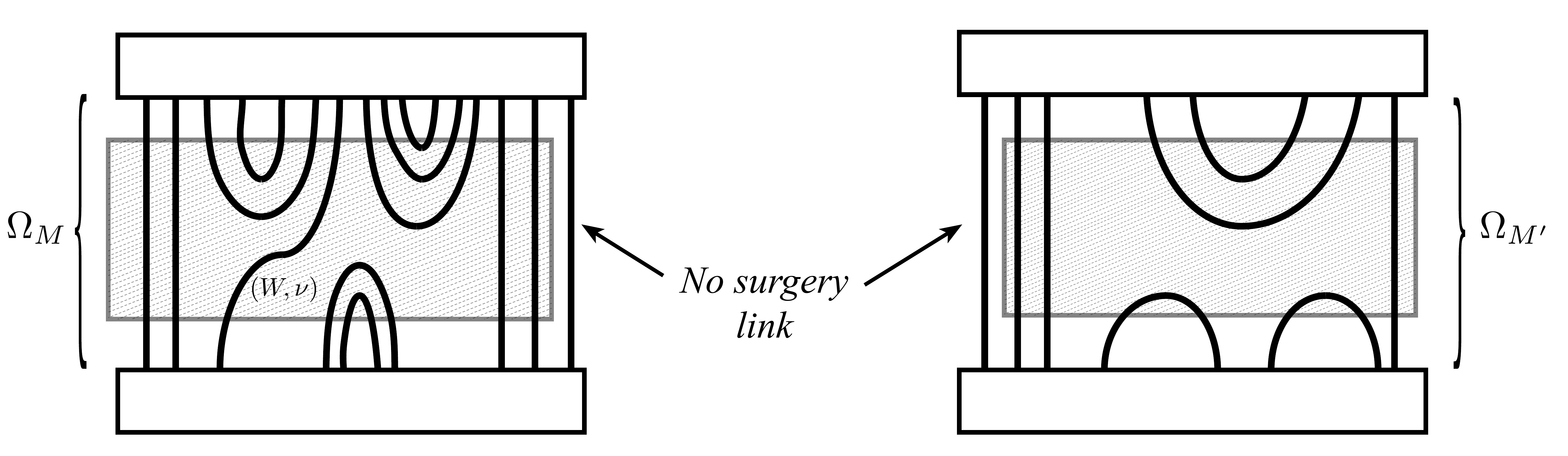}
\caption{No surgery link tangled at the rightmost and the leftmost}
\label{fig:nosurgerylink}
\end{figure}

We construct a new ribbon graph from these ribbon graphs $\Omega_{M}$ and $\Omega_{M'}$ as follows.
From $\Omega_{M}$, remove the rightmost uncolored vertical bands and denote the resulting ribbon graph by $\Omega_{M}^{-}$.
Similarly, remove the leftmost uncolored vertical bands from $\Omega_{M'}$ and denote the resulting ribbon graph by ${^{-}\Omega_{M'}}$.
We juxtapose $\Omega_{M}^{-}$ and $^{-}{\Omega_{M'}}$ so that $\Omega_{M}^{-}$ is on the left of ${^{-}\Omega_{M'}}$, namely $\Omega_{M}^{-} \otimes {^{-}\Omega_{M'}}$ in the category $\Rib$.
In the middle of the ribbon graph $\Omega_{M}^{-} \otimes {^{-}\Omega_{M'}}$, there are $2(m-1)$ uncolored vertical bands coming from the right uncolored vertical bands of $\Omega_{M}^{-}$ and the left uncolored vertical bands of ${^{-}\Omega_{M'}}$.
For each natural number $n$, let $\omega_{n}$ be a ribbon graph in $\R^2\times [0,1]\subset R^3$ defined in Figure \ref{fig:omega}.
The number of annulus ribbons in $\omega_{n}$ is $n$.
\begin{figure}[h]
\center
\includegraphics[width=3in]{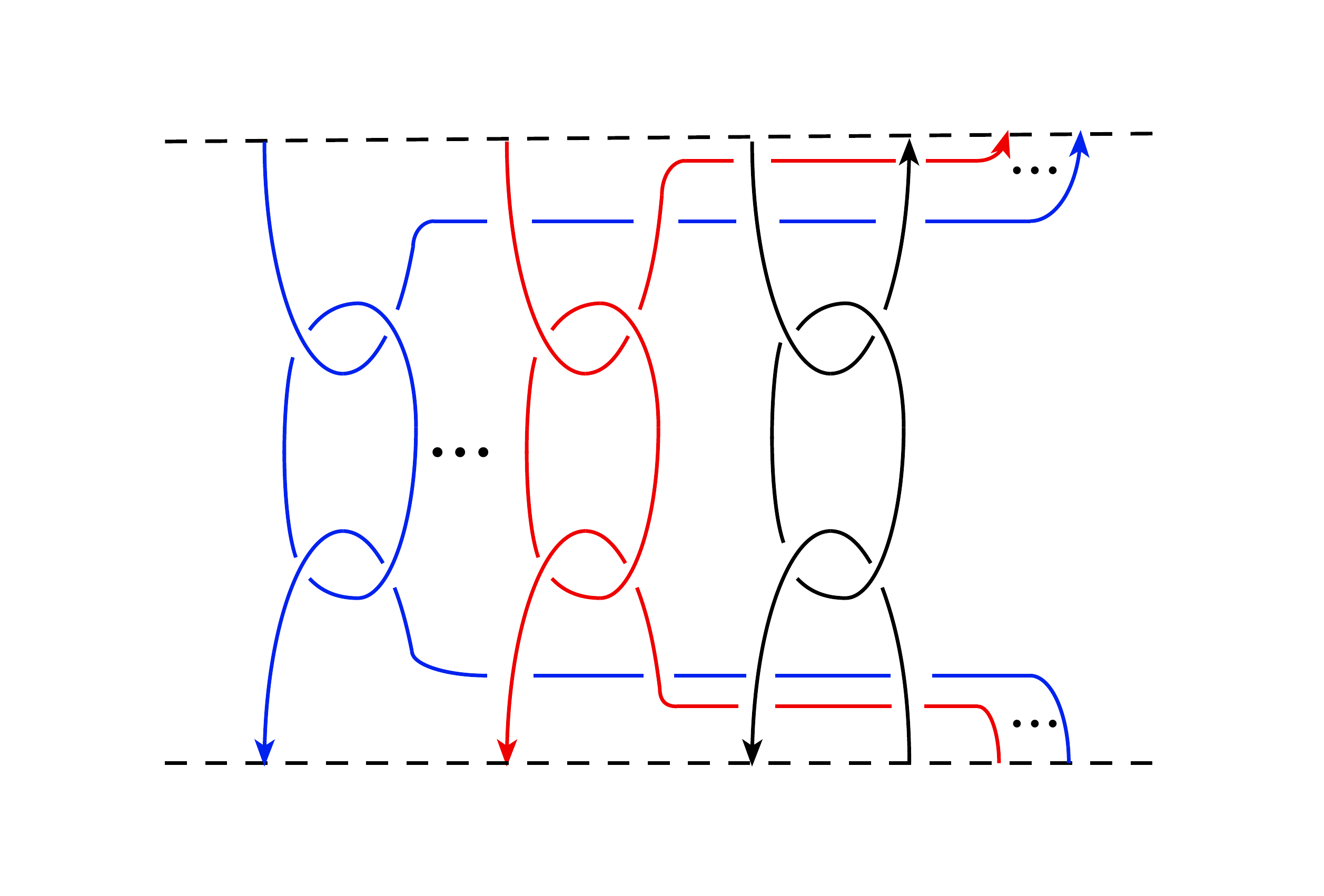}
\caption{Ribbon graph $\omega_{n}$}
\label{fig:omega}
\end{figure}
On the bottom of these $2(m-1)$ bands, we attach the ribbon graph $\omega_{m-1}$ defined in Figure \ref{fig:omega}.
Let $\Omega_{M, M'}$ denote the resulting ribbon graphs fitted in $\R^2 \times [0,1]$.
See Figure \ref{fig:Horizontal Special Ribbon } for an example.
\begin{figure}[h]
\center
\includegraphics[width=4in]{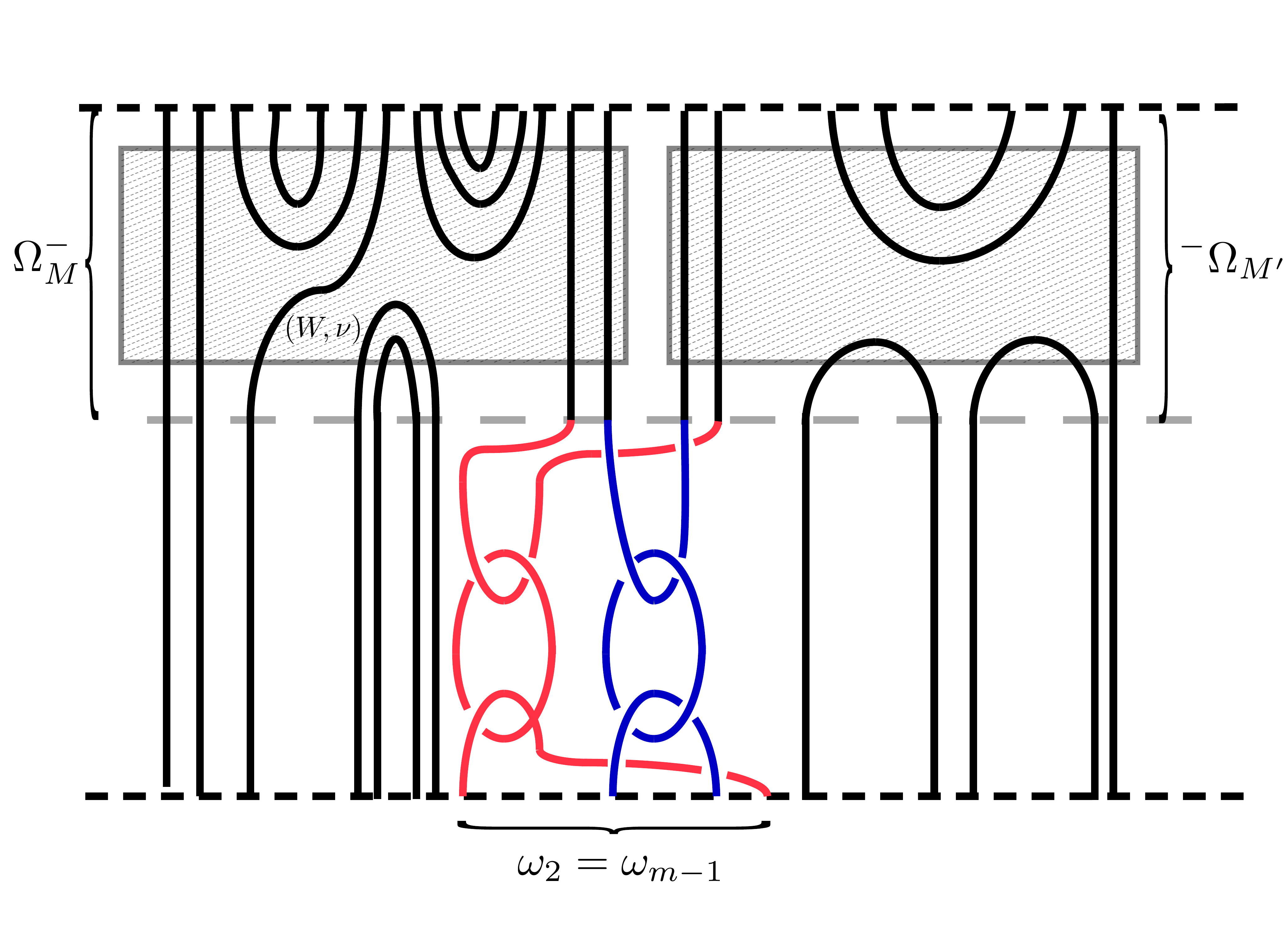}
\caption{Special ribbon graph $\Omega_{M, M'}$ for horizontal gluing}
\label{fig:Horizontal Special Ribbon }
\end{figure}

\begin{lemma}\label{lem:horizontal glue of ribbons}
The ribbon graph $\Omega_{M, M'}$  constructed above represents the horizontally glued cobordism $M\circ M'$.
\end{lemma}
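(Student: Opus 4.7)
The plan is to exhibit $\Omega_{M,M'}$ as a surgery presentation of the closed $3$-manifold $\Fill(M\circ M')$, together with the correct ribbon graph structure matching the boundary coloring $(h,j)$. The argument proceeds by a geometric decomposition of $\Fill(M\circ M')$, matched on the other side by operations on ribbon graphs in $S^3$.

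First, I decompose
\[\Fill(M\circ M') \;=\; \bigl(M\cup_{\rightb M=\leftb M'} M'\bigr)\cup D_l\cup D_n \cup U_{t_1\circ s_1}\cup U_{t_2\circ s_2}^{-},\]
and observe that the standard handlebody $U_{t_1\circ s_1}$ admits, up to ambient homeomorphism respecting the ribbon graph $R_{t_1\circ s_1}$, a decomposition of the form $U_{t_1}\cup U_{s_1}$ together with a regular neighborhood of the $m-1$ rainbow arcs attached between them; this is the geometric realization of the middle entry $m-1$ in the composite type $t_1\circ s_1$. A mirrored statement holds for $U_{t_2\circ s_2}^{-}$. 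The count $m-1$, rather than $m$, reflects the fact that of the $m$ pairs of facing side bands between $U_{t_1}$ and $U_{s_1}$, one pair already bounds a disk in the composite handlebody and contributes no genus, leaving only $m-1$ genuine bridges to be supplied.

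Regrouping, the piece containing $M$ together with $D_l$, $U_{t_1}$, $U_{t_2}^{-}$, and the absorbed pair of arches is, up to a controlled modification, a copy of $\Fill(M)$ with one of the $m$ solid-cylinder components of the interior $D_m$ excised. Symmetrically for $M'$. On the ribbon graph side, this excision is realized by deleting the rightmost uncolored vertical band of $\Omega_M$, producing $\Omega_M^{-}$; the deletion is unambiguous because, thanks to the preliminary isotopy pulling surgery components away (as in the setup for Figure \ref{fig:nosurgerylink}), no surgery component is tangled with that band. Symmetrically, $^{-}\Omega_{M'}$ presents the analogous truncation of $\Fill(M')$. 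Juxtaposing the two in a single $S^3=S^3\#S^3$ realizes the connect sum of the truncated closed manifolds, and attaching $\omega_{m-1}$ at the bottom of the $2(m-1)$ exposed middle vertical bands reinstates the regular neighborhood of the bottom rainbow block from $U_{t_1\circ s_1}$; the analogous top rainbow structure from $U_{t_2\circ s_2}^{-}$ is already present by construction in the reflected top halves of $\Omega_M^{-}$ and $^{-}\Omega_{M'}$. The resulting ribbon graph in $S^3$ is precisely $\Omega_{M,M'}$, with coloring by $h$ on the left and $j$ on the right inherited from $\cFill(M\circ M')_{hj}$.

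The main obstacle is the first step: carefully verifying the decomposition $U_{t_1\circ s_1}\cong U_{t_1}\cup N(\omega_{m-1})\cup U_{s_1}$ (and its mirror for $U_{t_2\circ s_2}^{-}$) with the correct accounting for the ``absorbed'' arch pair, and checking that the homeomorphism carries the inside ribbon graph $R_{t_1\circ s_1}$ to the corresponding juxtaposed-plus-rainbow ribbon graph. A secondary technical subtlety is the asymmetry in the definition of $\Omega_{M,M'}$, which only inserts $\omega_{m-1}$ at the bottom; one must verify that the top rainbow neighborhood really is automatically provided by the reflected top halves of $\Omega_M$ and $\Omega_{M'}$, rather than requiring a separate insertion. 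Once these geometric points are settled, the translation to the surgery-presentation side is bookkeeping and the coloring compatibility is tautological.
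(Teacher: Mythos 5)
Your proposal takes a genuinely different route from the paper, and it has a real gap. The paper does not decompose $\Fill(M\circ M')$ into handlebody pieces. Instead it stays entirely inside $S^3$: it observes that after excising a neighborhood of the top and bottom coupons, the rest of $\Omega_M$ sits in $S^2\times[0,1]$ and presents the cobordism $\cFill(M)$; that gluing $M$ and $M'$ along the rightmost/leftmost boundary circle (\emph{one} pair) can be realized ambiently in $\R^3$, producing precisely the juxtaposition $\Omega_M^-\otimes{^{-}\Omega_{M'}}$; and that gluing the remaining $m-1$ pairs of boundary cylinders is achieved by surgery along the $m-1$ annuli of $\omega_{m-1}$. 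The last step is carried out in detail by a handle-by-handle analysis modeled on Lemma IV.2.6 of \cite{Turaev10}: each annulus $A_r$ is fattened to $A_r\times[-1,1]$, surgery replaces it with a solid torus, and the resulting pieces $E(r)$, $F(r)$ are shown to produce exactly a cylinder over $\partial B_r^{-}\times[-1,1]$, i.e.\ the identification of the two boundary cylinders. This is the crux of the lemma.

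You identify the hard points yourself --- the decomposition of $U_{t_1\circ s_1}$, the asymmetric placement of $\omega_{m-1}$ --- but then write that ``once these geometric points are settled, the translation to the surgery-presentation side is bookkeeping.'' That is exactly backwards: the surgery argument \emph{is} the proof. Nothing in your sketch shows that surgery along $L_3$ (the $m-1$ annuli) yields the identification of the remaining cylindrical boundary pieces, and there is no substitute for it. Your ``asymmetry'' concern is also not resolvable the way you suggest: after attaching coupons to $\Omega_{M,M'}$, the bottom does look like $R_{t_1\circ s_1}$ (the caps of $\omega_{m-1}$ supply the middle rainbow), but the top consists of cups from $-R_{t_2}$, then $2(m-1)$ loose vertical ends, then cups from $-R_{s_2}$; it does \emph{not} look like $-R_{t_2\circ s_2}$ a priori. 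The correct top structure only emerges \emph{after} the annulus surgery, not ``by construction in the reflected top halves.'' Finally, the claim that $\Omega_M^-$ corresponds to ``$\Fill(M)$ with one of the $m$ solid-cylinder components of $D_m$ excised'' is off: deleting a ribbon from a graph does not excise anything from the ambient $3$-manifold. The object $\Omega_M^-\otimes{^{-}\Omega_{M'}}$ presents a cobordism in which exactly one of the $m$ boundary-cylinder pairs of $M,M'$ has been glued; the other $m-1$ are still free. Pinning these points down is what the paper's proof does, and it is where the content lives.
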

\begin{proof}
First note that since surgery links of $\Omega_M$ and  $\Omega_{M'}$ are away from the neighborhoods of uncolored vertical bands of $\Omega_M$ and $\Omega_{M'}$, the order of the gluing and surgery is interchangeable. 
From $S^3$ with the ribbon graph $\Omega_M$ in it, let us cut out a regular neighborhood  of the bottom coupon and top coupon and rainbow bands attached to them.
Assuming the neighborhood of the top coupon contains the infinity in $S^3=\R^3 \cup \{ \infty \}$, we may assume that the rest of the ribbon graph lies in $S^2 \times [0,1] \subset \R^3$.
Similarly for $\Omega_{M'}$.
The horizontal gluing of $M$ and $M'$ now corresponds to cutting out the regular neighborhoods of the right vertical bands of $\Omega_M\subset S^2 \times [0,1]$ and the left vertical bands of $\Omega_{M'} \subset S^2 \times [0,1]$ and identify their boundaries and do surgery.
We decompose this procedure in several steps.
Instead of cutting out those neighborhoods at the same time, we first cut out only the rightmost vertical band of $\Omega_M$ and the rightmost vertical band of $\Omega_{M'}$.
Then we identify the boundary.
This gluing can be realized in $R^3$ as in Figure \ref{fig:first gluing}.

\begin{figure}[h]
\center
\includegraphics[width=4.6in]{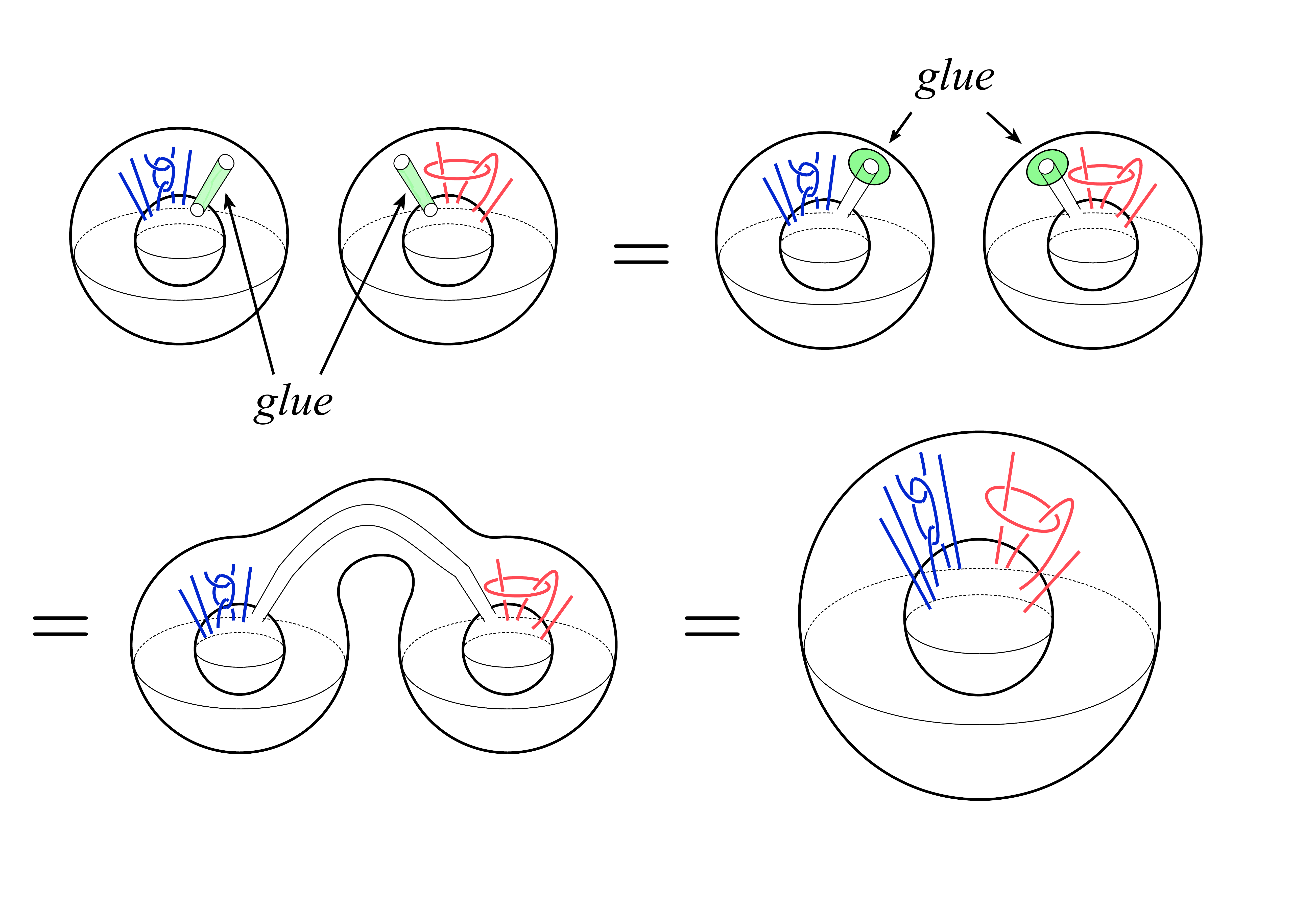}
\caption{Gluing the first corners}
\label{fig:first gluing}
\end{figure}

Thus the horizontally glued cobordism $M \circ M'$ can be obtained from the ribbon graph $\Omega_{M}^{-} \otimes {^{-}\Omega_{M'}}$ sitting in $S^2\times [0,1]$ by removing the neighborhoods of the middle $2(m-1)$ uncolored vertical bands and identify their boundaries.

Now we start from the ribbon graph $\Omega_{M, M'}$.
Attach coupons on the top and the bottom of the graph $\Omega_{M, M'}$.
Cut out the regular neighborhoods $T$ of the top and the bottom coupons and rainbow bands.
The rest of the ribbon lies in $S^2 \times [0,1]\subset R^3$.
We do surgery along the surgery link of $\omega_{m-1}$.
Let us describe this surgery carefully.
We follow the argument given in \cite[Lemma I\hspace{-.1em}V 2.6]{Turaev10}.
The ribbon graph $\omega_{m-1}$ has $m-1$ annuli along which we do surgery.
Let $A_r$ be the $r$-th annulus of $\omega_{m-1}$ for $r=1,\dots, m-1$.
We present this annulus in the form $A_r=D_r \setminus \Int(D_r')$, where $D_r$ and $D_r'$ are concentric 2-disks in $\R^2 \times [0,1]$ such that $D_r' \subset \Int(D_r)$ and $D_r'$ transversally intersects $\omega_{m-1}$ along two short intervals lying on two bands of $\omega_{m-1}$ linked by the annulus $A_r$, see Figure \ref{fig:annulus in omega}.

\begin{figure}[htbp]
 \begin{minipage}{0.4\hsize}
  \begin{center}
   \includegraphics[width=2in]{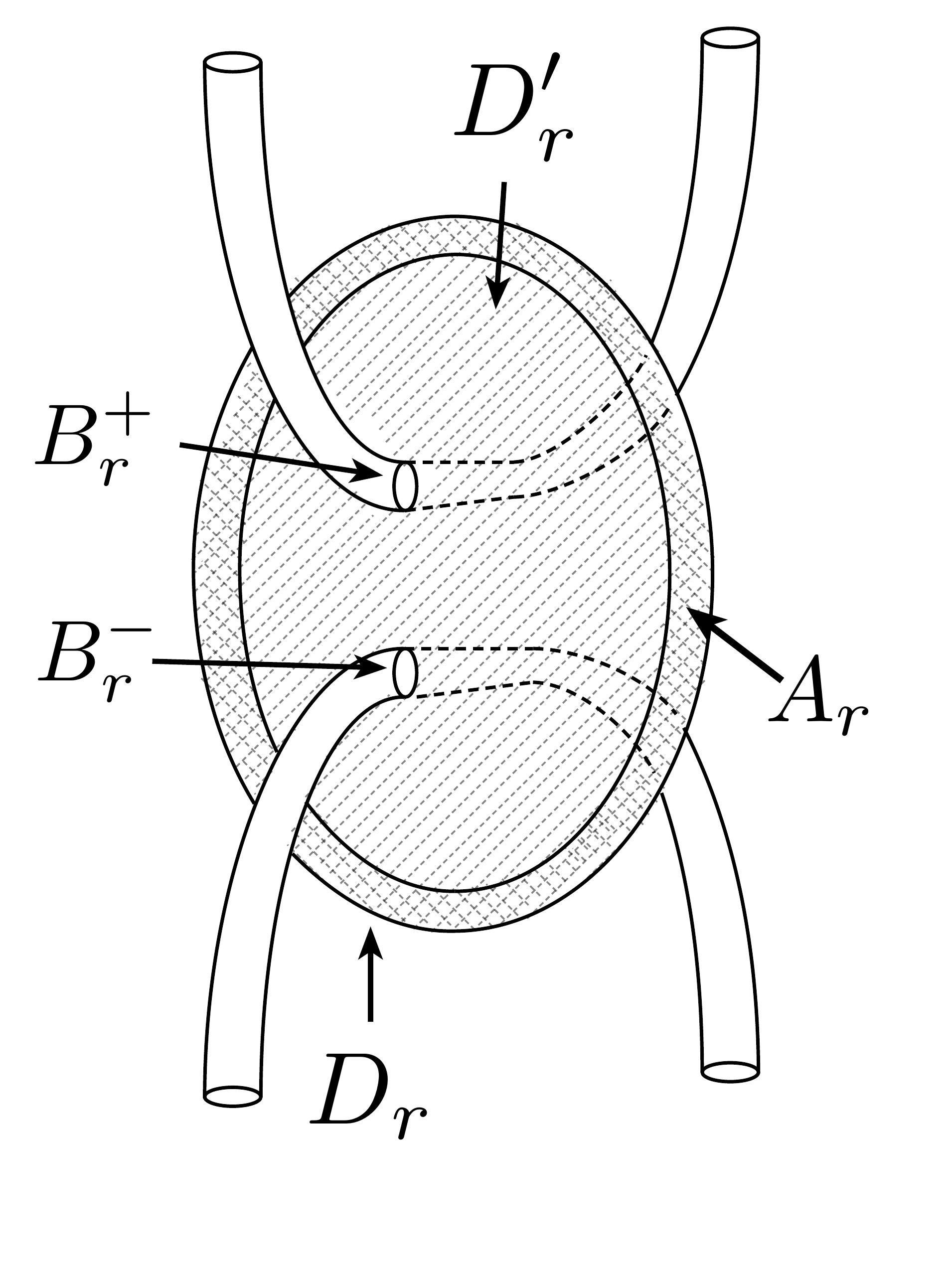}
  \end{center}
  \caption{The annulus $A_r$ in $\omega_{m-1}$}
  \label{fig:annulus in omega}
 \end{minipage}
 \begin{minipage}{0.5\hsize}
  \begin{center}
   \includegraphics[width=3in]{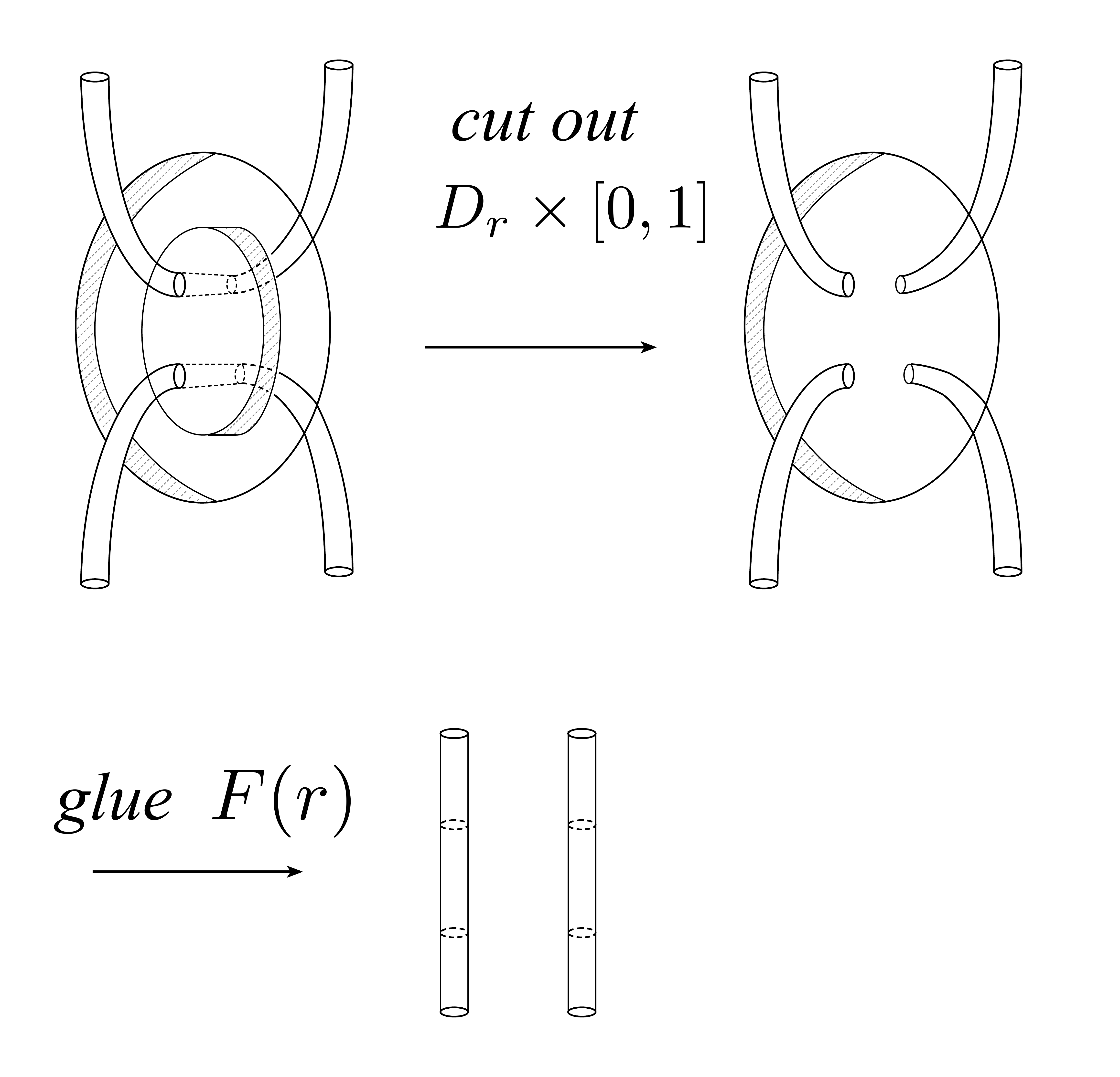}
  \end{center}
  \caption{Gluing $F(r)$}
  \label{fig:gluing F(r)}
 \end{minipage}
\end{figure}

Consider a regular neighborhood $D_r \times [-1, 1]$ in $\R^2 \times (0,1)$ of the larger disk $D_r$.
We think that $D_r$ lies in $D_r\times \{0\}$.
We assume that there are no redundant crossings.
Namely, locally the picture is as in Figure \ref{fig:annulus in omega}.
Let $B_r^{-}$ and $B_r^+$ be small closed disjoint 2-disks in $\Int(D_r')$ and we assume that the intersection of $D_r \times [-1, 1]$ and $T$ is subcylinder $B_r^-\times [-1, 1]$ and $B_r^+ \times [-1,1]$.

The surgery along the framed knot defined by $A_r$ may be described as follows.
Consider the solid torus
\[A_r \times [-1,1]= (D_r \times [-1,1]) \setminus (\Int(D_r') \times [-1,1]) \subset S^3.\]
Its boundary consists of four annuli $A_r \times \{-1\}$, $A_r \times \{1\}$, $\partial D_r \times [-1,1]$, $\partial D_r' \times [-1,1]$.
We remove the interior of $A_r \times [-1,1]$ from $S^3 \setminus \Int(T)$ and glue in its place the standard solid torus $D^2\times S^1$.
The gluing is performed along a homeomorphism $\partial(A_r \times [-1, 1]) \to \partial (D^2 \times S^1)$ carrying each circle $\partial D_r' \times \{t\}$ with $t\in [-1,1]$ onto a circle $\partial D^2 \times \{x\}$ with $x\in S^1$.
Let $E(r)$ denote the solid 3-cylinder formed by the disks $D^2 \times \{x\}$ glued to $\partial D_r' \times \{t\}$ with $t\in [-1,1]$.
Let $F(r)$ denote the complementary solid 3-cylinder $\overline{(D^2 \times S^1) \setminus E(r)}$.

For $r=1, \dots, m-1$ consider the genus 2 handlebody
\[ (D_r' \times [-1,1]) \setminus \Int(T) =(D_r' \setminus (\Int(B_r^- \cup B_r^+))) \times [-1, 1] \]
and glue $E(r)$ to it as specified above.
This gives a 3-cobordism with bases $\partial B_r^- \times [-1, 1]$ and $\partial B_r^+ \times [-1,1]$ lying in the bottom boundary and the top boundary, respectively, of the cobordism represented by $\Omega_{M, M'}$.
This cobordism is a cylinder over $\partial B_r^- \times [-1,1]$.
Indeed, for $t\in [-1,1]$, the disk $D^2 \times \{x\} \subset E(r)$ glued to $\partial D_r' \times \{t\}$ and the disk with two hole $(D_r' \setminus \Int(B_r^- \cup B_r^+)) \times \{t\}$ form an annulus with bases $\partial B_r^- \times \{t\}$ and $\partial B_r^+ \times \{t\}$.
These annuli corresponding to all $t \in [-1,1]$ form the cylinder in question.
When $r$ runs over $1, \dots, m-1$, we get $m-1$ cylinder cobordism.
We may glue each $F(r)$ inside $D_r \times [-1,1]\subset S^3$.
Then locally this is a complement of two cylinders as in Figure \ref{fig:gluing F(r)}.
Note that the union of these spaces corresponds to the identification of the cylindrical boundaries.
In Figure \ref{fig:identification of boundaries}, the space described on the left is the compliment of cylinders. 
The second space is $\partial B_r^- \times [-1,1] \times [0,1]$.
The inner boundary corresponds to $\partial B_r^- \times [-1, 1] \times \{0\}$ and the outer boundary is $\partial B_r^+ \times [-1,1]\times \{1\}$.
For each $s\in[0,1]$, the cylinder $\partial B_r^-\times [-1,1]\times \{s\}$ is glued to the first space. 
(The red circles indicate where to glue and the blue line indicate the interval $[0,1]$.)
This gluing of cylinder corresponds to identifying the time $s$ circles of the cylindrical boundaries.
\begin{figure}[h]
\center
\includegraphics[width=4in]{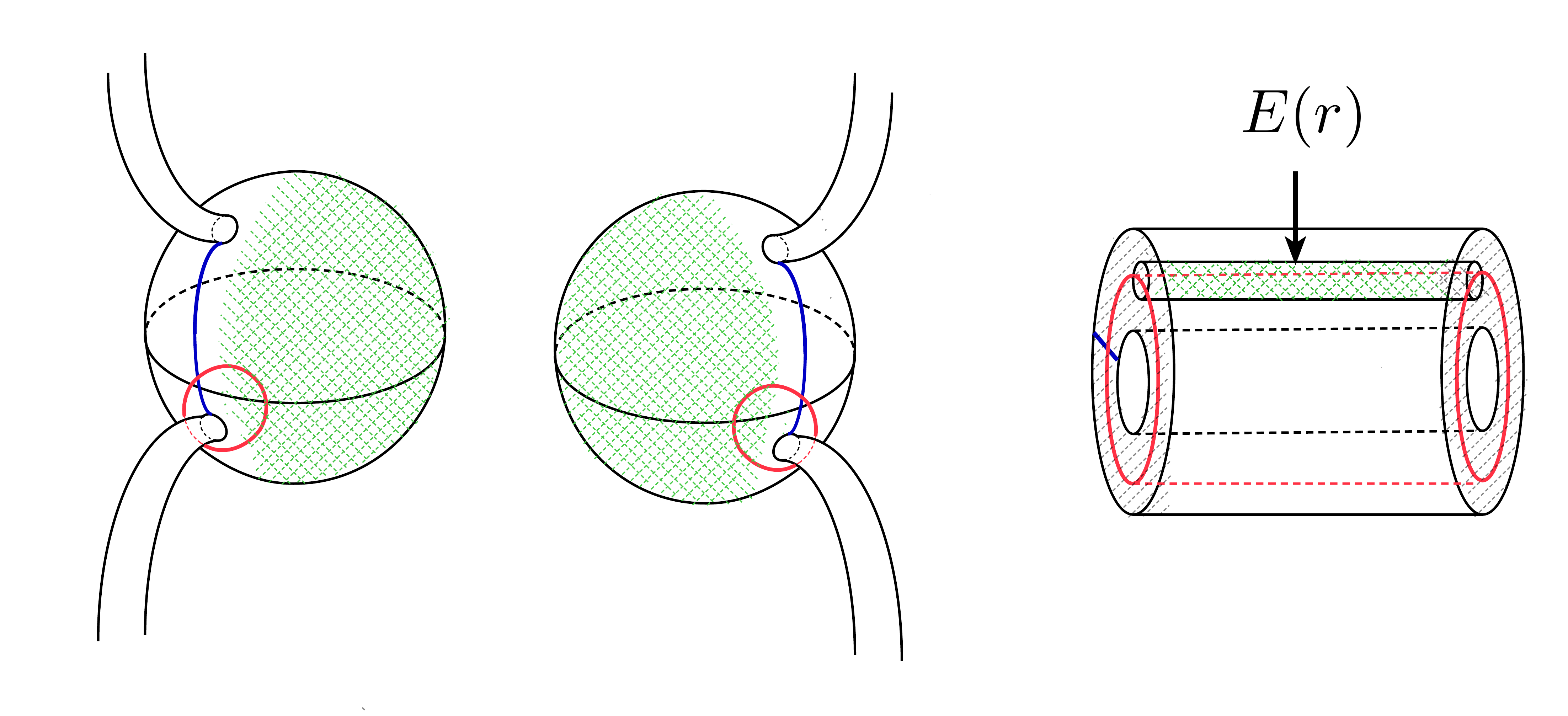}
\caption{Identification of boundaries}
\label{fig:identification of boundaries}
\end{figure}

Thus the surgery along framed link in $\omega_{m-1}$ is the same as cutting out regular neighborhoods of the middle uncolored vertical bands of $\Omega_{M}^{-} \otimes {^{-}\Omega_{M'}}$ and identifying the boundaries (after absorbing the small top and bottom cylindrical parts into the top and bottom boundaries by isotopy respectively.)

\end{proof}

Now that we obtained the ribbon graph operation for horizontal gluing, we use it to study the behavior of the assignment $\X$ under horizontal gluing.
Let $[M_1]: \Sigma_{t_1} \Rightarrow \Sigma_{t_2}: \nstand{l} \to \nstand{m}$ and $[M_2]:\Sigma_{s_1} \Rightarrow \Sigma_{s_2}: \nstand{m} \to \nstand{n}$ be 2-morphisms of $\Co$ that can be glued horizontally.
Recall  the canonical isomorphisms $u_1:\X(\Sigma_{t_1})\circ\X(\Sigma_{s_1}) \to \X(\Sigma_{t_1} \circ \Sigma_{s_1}) $ and  $u_2:\X(\Sigma_{t_2})\circ \X(\Sigma_{s_2}) \to \X(\Sigma_{t_2} \circ \Sigma_{s_2})$ given in the proof of Proposition \ref{prop:2-functor on 1-morphisms}.
The next lemma shows that these isomorphisms commute with 2-homomorphisms $\X(M_1 \circ M_2)$ and $\X(M_1) \circ \X(M_2)$.

\begin{prop}[Horizontal composition]\label{prop:2horizontal}

Let $[M_1]: \Sigma_{t_1} \Rightarrow \Sigma_{t_2}: \nstand{l} \to \nstand{m}$ and $[M_2]:\Sigma_{s_1} \Rightarrow \Sigma_{s_2}: \nstand{m} \to \nstand{n}$ be 2-morphisms of $\Co$.
Then we have $\X(M_1\circ M_2)u_1=u_2(\X(M_1)\circ \X(M_2))$.

\end{prop}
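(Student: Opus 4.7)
The plan is to verify the identity block-by-block using the explicit formula (\ref{equ:tau zeta eta extended}) together with the ribbon-graph realization of horizontal composition from Lemma \ref{lem:horizontal glue of ribbons}. Fix $h = (h_1,\dots,h_l)\in I^l$ and $j = (j_1,\dots,j_n)\in I^n$ and work on the $(h,j)$-block throughout; since $\X(M_1), \X(M_2), \X(M_1\circ M_2), u_1, u_2$ are all $K$-linear and decompose over such blocks, this suffices. The strategy is to reduce both sides to operator invariants of explicit $v$-colored ribbon graphs in $S^3$ and match them via a graphical calculation using Lemma \ref{lem:sum over simple 2}.

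First I would apply formula (\ref{equ:tau zeta eta extended}) to the left-hand side using the special ribbon graph $\Omega_{M_1, M_2}$ of Lemma \ref{lem:horizontal glue of ribbons}. This graph is the juxtaposition $\Omega_{M_1}^{-}\otimes {}^{-}\Omega_{M_2}$ with the additional piece $\omega_{m-1}$ attached to the $2(m-1)$ middle uncolored vertical bands. Indexing a target block of $\X(\Sigma_{t_2\circ s_2})_{hj}$ by $\xi' = (\eta_1, i^{top}, \eta_2) \in I^{t_2\circ s_2}$ with $\eta_1\in I^{t_2}$, $i^{top}\in I^{m-1}$, $\eta_2\in I^{s_2}$ (and similarly parametrizing the source block as $\xi = (\zeta_1, i^{bot}, \zeta_2)$), the operator invariant of $\Omega_{M_1, M_2}$ factorizes into the invariants of the pieces $\Omega_{M_1}, \Omega_{M_2}$ together with the contribution from the $m-1$ annuli of $\omega_{m-1}$. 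On the right-hand side, I would expand $(\X(M_1)\circ\X(M_2))_{hj} = \bigoplus_{i\in I^m}\X(M_1)_{hi}\otimes\X(M_2)_{ij}$ using formula (\ref{equ:tau zeta eta extended}) for each factor, and then apply $u_2$.

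The key identification is this: the summation over colorings of an annulus of $\omega_{m-1}$, weighted by $\dim(\lambda)$ as in formula (\ref{equ:tau zeta eta extended}), together with the operator invariant of that annulus linking a pair of adjacent middle bands, precisely implements the cap morphism $d_{V_i}$ from Lemma \ref{lem:sum over simple 2} summed over $i\in I$. This is the standard Hopf-link / Kirby-color identity in a modular category, and it identifies the surgery along $\omega_{m-1}$ with the iterated applications of Lemma \ref{lem:sum over simple 2} that build $u_1$ at the bottom of the cobordism and (dually) $u_2$ at the top, as in the proof of Proposition \ref{prop:2-functor on 1-morphisms}. Under this identification, both sides become the operator invariant of a common colored ribbon graph, up to normalization.

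The main obstacle is the careful accounting of normalization factors. Each annulus of $\omega_{m-1}$ is a $0$-framed unknot, so it contributes zero to the signature and one to the number of surgery components; hence $\sigma(L_{12}) = \sigma(L_1)+\sigma(L_2)$ while $\mu_{12} = \mu_1+\mu_2+(m-1)$. The type $t_2\circ s_2$ acquires an extra integer entry $m-1$, producing an extra $\dim(i^{top})$ factor in $\dim(\xi')$ and contributing $-(m-1)$ to $g^+$. On the left-hand side these combine to an overall extra $\D^{-2(m-1)}\dim(i^{top})\dim(i^{bot})$; on the right-hand side, the sum over $i\in I^m$ of the cap contributions from $u_1, u_2$ produces exactly this factor via the identity $\D^2 = \sum_k (\dim V_k)^2$ applied at each middle position, combined with the dimension weights already present in the operator invariants of the individual annuli. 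Since the $u_k$ are purely algebraic maps and do not affect $\sigma, \mu$, or $g^+$, the remaining $\Delta$ and $\D$ exponents match directly. This completes the verification.
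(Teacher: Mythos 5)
Your overall strategy matches the paper's: represent $M_1\circ M_2$ by the special ribbon graph $\Omega_{M_1,M_2}$ from Lemma \ref{lem:horizontal glue of ribbons}, factor the operator invariant using the monoidal structure of $\Rib$, isolate the contribution of $\omega_{m-1}$, and then match the normalization against $u_1,u_2$. But the central step --- your third paragraph --- is where the argument actually lives, and as stated it is both imprecise and slightly inaccurate, and the normalization accounting in the fourth paragraph has errors.

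The crux of the paper's proof is Lemma \ref{lem:claim}, which says that the normalized, Kirby-colored $\omega_n$ piece,
\[
\D^{-2n}\dim(\eta)\sum_{\lambda\in\col(L)}\dim(\lambda)\,F(\omega_n,\zeta,\eta,\lambda),
\]
is a Kronecker-delta projector: it equals $\id_\zeta$ when $\zeta=\eta$ and $0$ otherwise. This is \emph{not} the same thing as ``implementing the cap morphism $d_{V_i}$ from Lemma \ref{lem:sum over simple 2} summed over $i$.'' The caps in the argument come from $u_1$ and $u_2$, which are algebraic maps entering separately; the $\omega_{m-1}$ surgery gives a color-matching projector, not a cap. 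Proving the projector identity requires the modularity axiom (it is a relative of the killing lemma, Lemma \ref{lem:Fig310}, which the paper cites to Turaev II.3), and writing ``this is the standard Hopf-link / Kirby-color identity'' does not replace that argument. Relatedly, the identity $\D^2 = \sum_k(\dim V_k)^2$ is a definition of $\D$ and is far weaker than what is needed; it alone cannot force the delta $\delta_{\zeta_3,\eta_3}$ or produce the $\id_\zeta$ on the nose.

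On the normalization: the composite type $t_2\circ s_2$ gains the integer entry $m-1$, which \emph{increases} $g^+$ by $m-1$ (you wrote it contributes $-(m-1)$ to $g^+$), so the exponent of $\D$ drops by $m-1$ from this, and by another $m-1$ from the new surgery components, giving the extra $\D^{-2(m-1)}$; the extra dimension weight is $\dim(\eta_3)=\dim(i^{top})$ only --- there is no additional $\dim(i^{bot})$ factor, and inserting one breaks the balance. Finally, you do not address the last step of the paper's proof at all: after Lemma \ref{lem:claim} collapses $\omega_{m-1}$, the remaining graph is $\Omega_{M_1}^-\otimes{}^-\Omega_{M_2}$ (with the inner vertical bands removed), and one still has to convert this into $\X(M_1)\circ\X(M_2)$ built from the full graphs $\Omega_{M_1},\Omega_{M_2}$. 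The paper does this by noting that $u_2$ is realized graphically as a cap connecting the tops of the removed bands, and, since no surgery links obstruct, this cap can be slid downward (Figure \ref{fig:ugg-gu}). This sliding move is what actually reconciles the sum over $i\in I^{m-1}$ coming from the projector with the sum over $i\in I^m$ in the 2-matrix product, and your sketch is missing it.
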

\begin{proof}
Let $\Omega_1=\Omega_{M_1}$ and $\Omega_2=\Omega_{M_2}$ be special ribbon graphs representing $M_1$ and $M_2$, respectively, so that no surgery links are tangled with the rightmost vertical band of $\Omega_{1}$ and the leftmost vertical band of $\Omega_{2}$.
Then the special ribbon graph $\Omega=\Omega_{M_1, M_2}$ represents the horizontally glued cobordism $M_1\circ M_2$ by Lemma \ref{lem:horizontal glue of ribbons}.
From $\Omega_{1}$, remove the rightmost uncolored vertical bands and denote the resulting ribbon graph by $\Omega_{1}^{-}$.
Similarly, remove the leftmost uncolored vertical bands from $\Omega_{2}$ and denote the resulting ribbon graph by ${^{-}\Omega_{2}}$.

For $i=1,2$, we define several notations.
Let $g_i^+$ be the number of cup like bands in $\Omega_{i}$.
Let $g^+$ be the number of cup like band in $\Omega$.
Since there are $m-1$ cup like bands in $\omega_{m-1}$, we have $g^+=g_1^+ + g_2^+ +(m-1)$.
Let $L_i$ be surgery links in $\Omega_{i}$.
The $m-1$ annuli in $\omega_{m-1}$ is denoted by $L_3$.
Denote by $L$ the surgery links of the ribbon graph $\Omega=\Omega_{M_1, M_2}$.
Then $L$ is a disjoint (unlinked) union of $L_1$ and $L_2$ and $L_3$.
Let $\mu, \mu_1, \mu_2$ be the number of components of $L, L_1, L_2$ respectively.
We have $\mu=\mu_1 + \mu_2 + m-1$. 

The $(h,j)$-component homomorphism $\X(M_1 \circ M_2)_{h,j}: \X(\Sigma_{t_1\circ s_1})_{h, j} \to \X(\Sigma_{t_2 \circ s_2})_{h, j}$ can be calculated by Formula (\ref{equ:tau zeta eta extended}).
Let $\zeta$ and $\eta$ be a color for cap like bands and cup like bands of $\Omega$, respectively.
We calculate $(\zeta, \eta)$-block
\[\X_{\zeta}^{\eta}:=\left(\X(M_1 \circ M_2)_{h, j}\right)_{\zeta}^{\eta}.\]
By Formula (\ref{equ:tau zeta eta extended}), we have
\begin{equation*}
\X_{\zeta}^{\eta}=\Delta^{\sigma(L)} \D^{-g^+ -\sigma(L)-\mu} \dim (\eta) \sum_{\lambda \in \col(L)} \dim (\lambda) F_0({_h\Omega_{j}}, \zeta, \eta, \lambda),
\end{equation*}
where ${_h\Omega_{j}}$ is the ribbon graph $\Omega$ with the left vertical bands colored by $h$ and the right vertical bands colored by $j$.
The ribbon graph ${_h\Omega_{j}}$ is the same as $\Omega_{(M\circ M', h, j)}$ in the notation of Formula (\ref{equ:tau zeta eta extended}).
Note that we have $\sigma(L)=\sigma(L_1)+\sigma(L_2)+\sigma(L_3)=\sigma(L_1)+\sigma(L_2)$, since the annuli of $\omega_{m-1}$ are separated, thus $\sigma(L_3)=0$.
We write $\eta=\eta_1+\eta_2+\eta_3$, where $\eta_i$ is a color of the cup like bands of $\Omega_i$ for $i=1, 2$, and $\eta_3$ is a color of the cup like bands of $\omega_{m-1}$.
Then we have $\dim(\eta)=\dim(\eta_1)\dim(\eta_2)\dim(\eta_3)$.
Write analogously $\zeta=\zeta_1 +\zeta_2 +\zeta_3$ for the cap like bands.
Similarly, we decompose a color $\lambda=\lambda_1 + \lambda_2 + \lambda_3$, where $\lambda_i$ is a color of $L_i$ for $i=1,2,3$.
Then we have $\dim(\lambda)=\dim(\lambda_1)\dim(\lambda_2)\dim(\lambda_3)$.

Expressing the ribbon graph $\Omega$ as a morphism of $\Rib$, we have
\[\Omega=(\Omega_{1}^{-} \otimes {^{-}\Omega_{2}})(\id_1\otimes \omega_{m-1} \otimes \id_2).\]
See Figure \ref{fig:Horizontal Special Ribbon }.
Since the operator invariant $F$ is a monoidal functor from $\Rib$, we have $F(_h\Omega_{j}, \zeta, \eta, \lambda)$
\[=\left( F(_h(\Omega_1^-), \zeta_1,\eta_1, \lambda_1) \otimes F(({^-\Omega_2})_j,\zeta_2, \eta_2, \lambda_2) \right) F(\id_1\otimes \omega_{m-1} \otimes \id_2, \zeta_3, \eta_3,\lambda_3).\]
Then $\X_{\zeta}^{\eta}$ is the composition of a morphism $\1 \to \Phi(t_1\circ s_1; \zeta; h,j)$ with
\begin{multline*}
\bigg( \Delta^{\sigma(L_1)} \D^{-g_1^+ -\sigma(L_1)-\mu_1} \dim (\eta_1) \sum_{\lambda_1 \in \col(L_1)} \dim (\lambda) F(_h(\Omega_1^-), \zeta_1, \eta_1, \lambda_1)\\
\otimes
\Delta^{\sigma(L_2)} \D^{-g_2^+ -\sigma(L_2)-\mu_2} \dim (\eta_2) \sum_{\lambda_2 \in \col(L_2)} \dim (\lambda_2) F(({^-\Omega_2})_j, \zeta_2, \eta_2, \lambda_2) \bigg)\\ 
\Delta^{\sigma(L_3)} \D^{-(m-1) -\sigma(L_3)-(m-1)} \dim (\eta_3) \sum_{\lambda_3 \in \col(L_3)} \dim (\lambda_3) F(\id_1\otimes \omega_{m-1} \otimes \id_2, \zeta_3, \eta_3, \lambda_3).
\end{multline*}

We compute the last term.
First, since $\sigma(L_3)=0$, the last term reduces to
\begin{equation}
\D^{-2(m-1)} \dim (\eta_3) \sum_{\lambda_3 \in \col(L_3)} \dim (\lambda_3) F(\id_1\otimes \omega_{m-1} \otimes \id_2, \zeta_3, \eta_3, \lambda_3).
\end{equation}
We claim that the sum is zero unless $\zeta_3=\eta_3$.
If  $\zeta_3=\eta_3$, then  the sum is equal to the operator invariant of $\id_{\zeta}$.
Here $\id_{\zeta}$ is vertical bands whose colors are determined according to $\zeta$.
We postpone the proof of this claim.
We will prove this lemma as a consequence of some graphical calculations.
See Lemma \ref{lem:claim} below.
Assuming the claim, we complete the current proof.

To show $u_2\X(M_1\circ M_2)=(\X(M_1)\circ \X(M_2))u_1$, it suffices to show
\begin{equation}\label{equ:ugg=gu}
\X(M_1\circ M_2)_{h,j}u_1=u_2(\X(M_1)_{h,i}\circ \X(M_2)_{i,j}),
\end{equation}
where $u_i$ is an isomorphism in Lemma \ref{lem:sum over simple 2}.
The left hand side of (\ref{equ:ugg=gu}) is equal to
 \begin{equation}\label{equ:g (Omega Omega)}
\biggr[ \bigoplus_{\zeta, \eta} \Delta^{\sigma(L)}\D^{-g^+ - \sigma(L) - \mu} \dim (\eta) \sum_{\lambda} \dim (\lambda) F_0(_h \Omega_j, \zeta, \eta, \lambda) \biggr]u_1,
\end{equation}
where $\zeta \in I^{t_1\circ s_1}$ and $\eta \in I^{t_2 \circ s_2}$ decompose as $\zeta=\zeta_1+\zeta_2+\zeta_3$ and $\eta=\eta_1+\eta_2+\eta_3$ with notations as above.
By the claim we may assume that $\zeta_3=\eta_3$.
Hence we can write the equation (\ref{equ:g (Omega Omega)}) as follows.

\begin{multline*}
 \bigoplus_{i\in I^{m-1} } \bigoplus_{\zeta_1, \zeta_2} \bigoplus_{\eta_1, \eta_2} \\\biggl[
\Delta^{\sigma(L_1)} \D^{-g_1^+ -\sigma(L_1)-\mu_1} \dim (\eta_1) \sum_{\lambda_1 \in \col(L_1)} \dim (\lambda) F(_h(\Omega_1^-)_i, \zeta_1, \eta_1, \lambda_1)\\
\otimes
\Delta^{\sigma(L_2)} \D^{-g_2^+ -\sigma(L_2)-\mu_2} \dim (\eta_2) \sum_{\lambda_2 \in \col(L_2)} \dim (\lambda_2) F(_i(^-\Omega_2)_j, \zeta_2, \eta_2, \lambda_2)\biggr]u_1.
\end{multline*}
The above equation is further equal to
\begin{multline*}\label{equ:big big bigoplus 2}
u_2\bigoplus_{i\in I^{m} } \bigoplus_{\zeta_1, \zeta_2} \bigoplus_{\eta_1, \eta_2} \\ \biggl[
\Delta^{\sigma(L_1)} \D^{-g_1^+ -\sigma(L_1)-\mu_1} \dim (\eta_1)  \sum_{\lambda_1 \in \col(L_1)} \dim (\lambda) F(_h(\Omega_1)_i, \zeta_1, \eta_1, \lambda_1)\\
\otimes
\Delta^{\sigma(L_2)} \D^{-g_2^+ -\sigma(L_2)-\mu_2} \dim (\eta_2) \sum_{\lambda_2 \in \col(L_2)} \dim (\lambda_2) F(_i(\Omega_2)_j, \zeta_2, \eta_2, \lambda_2)\biggr].
\end{multline*}
To see this, note that as a graphical calculation $u_2$ connects the top of the rightmost band of $\Omega_1$ and the leftmost band of $\Omega_2$ by a cap like band as in Figure \ref{fig:ugg-gu}.
Since no surgery links are tangled with those bands, we can push down the cap like bands.
This explain the equality of Figure \ref{fig:ugg-gu}.
\begin{figure}
\center
\includegraphics[width=3in]{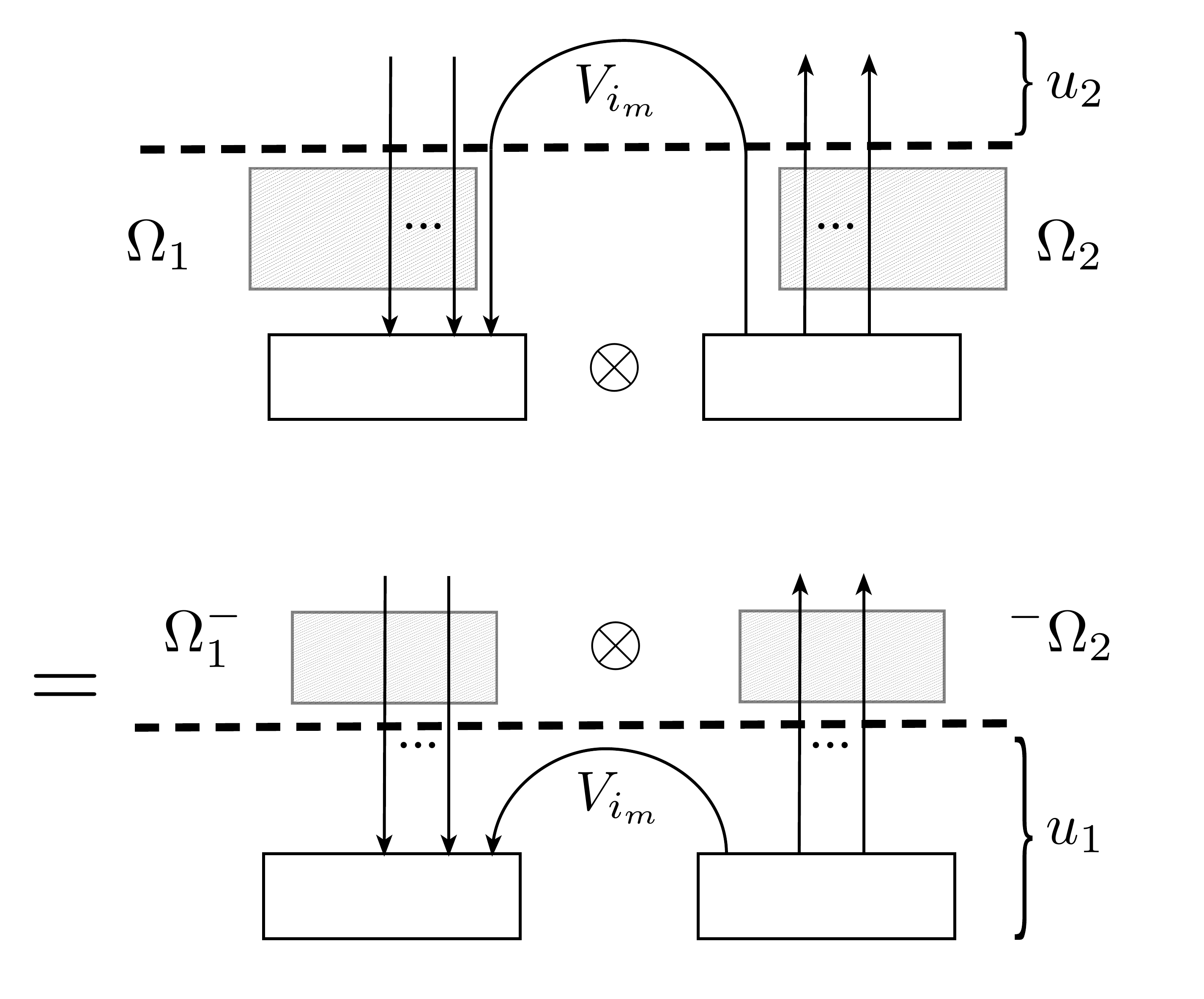}
\caption{The graphical calculation for (\ref{equ:ugg=gu})  }
\label{fig:ugg-gu}
\end{figure}

Finally by Formula (\ref{equ:tau zeta eta extended}),  the above equation is equal to
\[ (\X(M_1)_{h,i} \circ \X(M_2))_{i,j} u_1.\]
Thus the proof is complete assuming the claim, which we prove below.
\end{proof}

In the following graphical calculations, the equality with dot $\stackrel{\bullet}{=}$ means the equality after applying the operator invariant functor $F$ to the ribbon graphs.
Consider the ribbon graphs in Figure \ref{fig:Fig310}.
The label $i$ is an arbitrary element of $I$.
\begin{figure}[h]
\center
\includegraphics[width=4in]{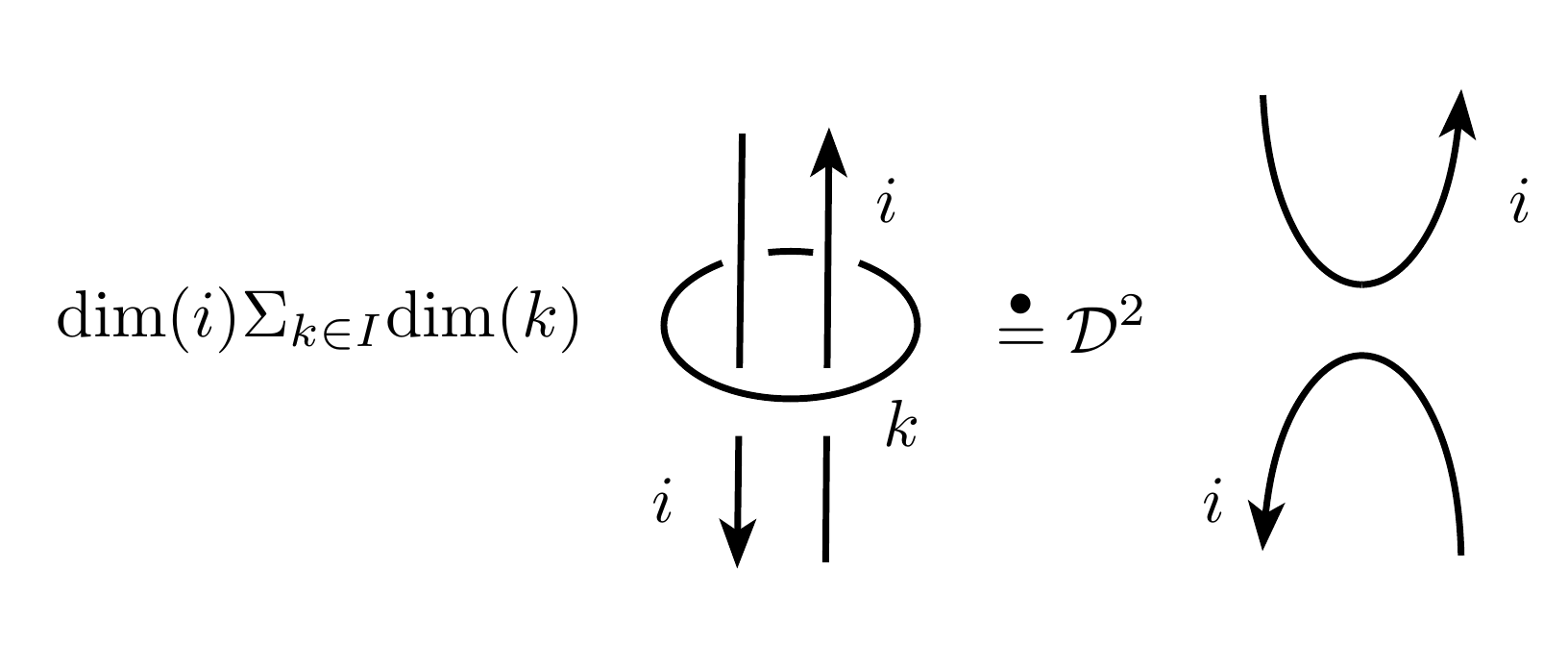}
\caption{}
\label{fig:Fig310}
\end{figure}
\begin{lemma}\label{lem:Fig310}
The equality in Figure \ref{fig:Fig310} holds.
If the color of the left vertical strand is replaced with $j\neq i$, then the sum on the left hand side is equal to $0$.
\end{lemma}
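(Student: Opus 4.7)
The plan is to recognize the left hand side as a standard "Kirby color" calculation. Recall that in a modular category the element $\omega := \sum_{i \in I} \dim(V_i)\, V_i$ (the Kirby color) has the property that an annulus colored by $\omega$ encircling a single vertical strand colored by a simple object $V_k$ evaluates, under $F$, to $\D^2$ times $\id_{V_k}$ if $k$ indexes the unit object $\1$, and to $0$ otherwise. This is essentially Lemma II.3.2.3 of \cite{Turaev10}, and it is the workhorse behind the invariance of $\tau$ under the second Kirby move. The present lemma is the analogue of this statement for two parallel strands being encircled simultaneously, so the first step is to reduce it to the single strand case.

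More concretely, I would proceed as follows. Consider the two vertical strands colored $V_j$ (left) and $V_i$ (right) that are linked by the annulus on the left hand side of Figure~\ref{fig:Fig310}. By replacing the two parallel strands with a single strand of color $V_j \otimes V_i^*$ (after bending the right strand into a cap on top and a cup on bottom, or equivalently regarding the pair $(V_j,V_i)$ as a single band colored by the tensor product), the annulus linking the pair is transformed into an annulus linking a single strand colored by $V_j \otimes V_i^*$. Decompose
\[
V_j \otimes V_i^* \cong \bigoplus_{k \in I} N_{ji}^{k}\, V_k
\]
into simple summands. Applying the Kirby color identity strand by strand, the sum $\sum_{\lambda \in I} \dim(V_\lambda) F(\cdots)$ collapses to $\D^2$ times the projection onto the $V_1 = \1$-isotypic component of $V_j \otimes V_i^*$.

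Since the simple objects of $\V$ are pairwise non-isomorphic, we have $\Hom(\1, V_j \otimes V_i^*) \cong \Hom(V_i, V_j) = 0$ whenever $j \neq i$, so no $\1$-summand occurs and the whole sum vanishes. This proves the second assertion. When $j = i$, there is a one-dimensional $\1$-summand in $V_i \otimes V_i^*$, produced by the standard duality $b_{V_i}\colon \1 \to V_i \otimes V_i^*$ and $d_{V_i}\colon V_i \otimes V_i^* \to \1$, and the Kirby color identity yields exactly $\D^2$ times the resulting cap--cup configuration, which matches the right hand side of Figure~\ref{fig:Fig310} (the constants $\D^{\pm 2}$ will have been absorbed into the normalizations appearing in formula~(\ref{equ:tau zeta eta extended})).

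The main subtlety will be a careful bookkeeping of the normalizations and of the direction of the annulus relative to the strands, so that the resulting identity produces exactly the cap--cup picture with the claimed coefficient (and not $\D^{\pm 2}$ times it). Up to this bookkeeping the argument is purely formal: it is the translation of the Kirby color identity through the functoriality of $F$ and Schur's lemma for simple objects in $\V$. Once Lemma~\ref{lem:Fig310} is established in this form, the claim used in the proof of Proposition~\ref{prop:2horizontal}---that in the sum $\sum_{\lambda_3} \dim(\lambda_3) F(\id_1 \otimes \omega_{m-1} \otimes \id_2, \zeta_3, \eta_3, \lambda_3)$ only the diagonal terms $\zeta_3 = \eta_3$ survive, and that they reconstitute $\id_\zeta$ up to the factors $\D^{2(m-1)}$---follows by applying the lemma to each of the $m-1$ annuli of $\omega_{m-1}$ in turn.
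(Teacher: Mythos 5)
Your argument is correct and is, in substance, the argument that the paper appeals to: the paper's ``proof'' of Lemma~\ref{lem:Fig310} is simply the citation ``See [Section II.3, p.~98] of Turaev's book,'' which is exactly the Kirby-color / killing-property discussion you are unpacking. The cabling step (fusing the two encircled strands into a single strand colored $V_j \otimes V_i^*$), the reduction to the single-strand Kirby-color identity $\sum_k \dim(V_k)\,S_{k\ell} = \D^2\,\delta_{\ell,\1}$, and the use of Schur's lemma via $\Hom(\1, V_j\otimes V_i^*)\cong\Hom(V_i,V_j)$ are precisely the standard ingredients behind that reference, so you are on the same route, just spelling it out. The only caveat is the one you already flag yourself: the $\D^{\pm 2}$ normalization and the orientation/duality bookkeeping when bending the right strand must be tracked carefully so that what survives in the $i=j$ case is exactly the cap--cup configuration of the figure's right-hand side with coefficient matching Lemma~\ref{lem:claim} (where the compensating $\D^{-2n}$ appears); this is routine but is where sign and framing errors typically hide.
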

\begin{proof}
See \cite[Section II 3 p.98]{Turaev10}
\end{proof}

\begin{lemma}\label{lem:claim}
For each ribbon graph $\omega_n$, let $\zeta$ and $\eta$ be sequences of colors of the bottom and top rainbow like bands of $\omega_n$.
Then we have
\begin{align}\label{equ:F of omega}
&\D^{-2n} \dim (\eta) \sum_{\lambda \in \col(L)} \dim (\lambda) F(\omega_{n}, \zeta, \eta, \lambda)\\
& \stackrel{\bullet}{=} \begin{cases} \id_{\zeta} &\mbox{if } \zeta=\eta \\ \notag
0 & \mbox{if } \zeta \neq \eta. \end{cases} 
\end{align}

\end{lemma}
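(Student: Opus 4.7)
The plan is to reduce the identity to its atomic case $n=1$ and invoke Lemma~\ref{lem:Fig310}. By construction $\omega_n$ is the horizontal juxtaposition in $\R^2\times[0,1]$ of $n$ unlinked side-by-side copies of $\omega_1$: each copy contains a single surgery annulus linking one pair of parallel strands, together with the corresponding bottom and top rainbow-like bands carrying colors $\zeta_r$ and $\eta_r$. Consequently $\col(L)=I^n$, and the dimension functions are multiplicative,
\[ \dim(\eta)=\prod_{r=1}^{n}\dim(\eta_r),\qquad \dim(\lambda)=\prod_{r=1}^{n}\dim(\lambda_r). \]

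Because $F\colon\Rib\to\V$ is a monoidal functor with respect to horizontal juxtaposition, we have
\[ F(\omega_n,\zeta,\eta,\lambda)\stackrel{\bullet}{=}F(\omega_1,\zeta_1,\eta_1,\lambda_1)\otimes\cdots\otimes F(\omega_1,\zeta_n,\eta_n,\lambda_n). \]
Substituting into the left-hand side of (\ref{equ:F of omega}) and distributing the sum over the product $\col(L)=I\times\cdots\times I$, the expression factors as
\[ \bigotimes_{r=1}^{n}\Bigl[\,\D^{-2}\dim(\eta_r)\sum_{\lambda_r\in I}\dim(\lambda_r)\,F(\omega_1,\zeta_r,\eta_r,\lambda_r)\,\Bigr]. \]
It therefore suffices to prove the atomic case: each bracketed factor equals $\id_{\zeta_r}$ when $\zeta_r=\eta_r$ and vanishes otherwise. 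A tensor product of identities is an identity, and a tensor product containing at least one zero factor is zero, so the general statement follows at once from the atomic one.

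For $n=1$, the ribbon graph $\omega_1$ is, up to planar isotopy, precisely the local configuration treated by Lemma~\ref{lem:Fig310}: one colored surgery annulus encircling two parallel strands whose top and bottom are closed off by rainbow bands of colors $\eta_1$ and $\zeta_1$. Lemma~\ref{lem:Fig310} evaluates $\sum_{\lambda_1}\dim(\lambda_1)F(\omega_1,\zeta_1,\eta_1,\lambda_1)$ explicitly: it produces a definite scalar multiple of $\id_{\zeta_1}$ when the two strand colors agree, and $0$ otherwise. The prefactor $\D^{-2}\dim(\eta_1)$ is exactly what is required to convert that scalar into $\id_{\zeta_1}$, a normalization that follows from the standard Kirby-type identity $\sum_i \dim(i)\,\bigl(\text{annulus colored by }V_i\bigr)=\D^{2}\cdot\id$ around a single strand, combined with the cup--cap evaluation of a rainbow band on a simple object, which contributes a factor of $\dim(\eta_1)$.

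The main subtlety is the bookkeeping of normalization in this last step: one must verify that the factor $\dim(\eta_r)$ absorbed by closing the rainbow bands and the factor $\D^2$ produced by the Kirby sum over the annulus combine with the prefactor $\D^{-2}\dim(\eta_r)$ to yield precisely $\id_{\zeta_r}$ in the matched case. Once this check is in place, the atomic case is established and taking tensor products over $r$ gives the full lemma.
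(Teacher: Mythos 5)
Your strategy---reduce to the one-annulus configuration treated by Lemma~\ref{lem:Fig310} and dispatch each annulus separately---is in spirit the same as the paper's, but the reduction step you rely on does not hold. You assert that $\omega_n$ is a side-by-side horizontal juxtaposition of $n$ unlinked copies of $\omega_1$, so that monoidality of $F$ gives $F(\omega_n,\zeta,\eta,\lambda)\stackrel{\bullet}{=}\bigotimes_r F(\omega_1,\zeta_r,\eta_r,\lambda_r)$. Tracing the ordering conventions for uncolored bands (left bands of a standard ribbon graph are numbered left-to-right, right bands right-to-left), the $r$-th right strand of $\Omega_{M}^{-}$ and the $r$-th left strand of ${^{-}\Omega_{M'}}$ occupy positions symmetric about the centre of the $2(m-1)$ middle strands; consequently $\omega_n$ is a \emph{nested} configuration (the $r$-th cup/cap/annulus triple encircles the $(r-1)$-th), not a planar disjoint union of $\omega_1$'s. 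A nested pair of cups/caps and a juxtaposed pair are morphisms into different sequence objects of $\Rib$ once the colours differ, so the asserted tensor decomposition is not an equation of morphisms and the appeal to monoidality of $F$ does not apply.

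The paper avoids this by not factoring globally: it isotopes a single annulus together with the cup-like and cap-like bands it involves into the local picture of Figure~\ref{fig:Fig310}, invokes Lemma~\ref{lem:Fig310} to obtain vanishing unless the two colours there agree together with a factor $\D^2$ when they do, and then repeats for the remaining $n-1$ pieces, which become accessible once the first has been removed. Your final answer and the normalization bookkeeping are essentially right, but the global tensor-factorization step should be replaced by this local ``peel one annulus at a time'' isotopy argument (or supplemented by an explicit accounting of the braidings that appear if one tries to slide nested pieces past one another into a side-by-side position).
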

\begin{proof}
A part of the ribbon graph $\omega_n$ that consists of an annulus and a pair of a cup like band and a cap like band can be deformed so that it contains the ribbon graphs that appeared on the left hand side of the equation in Figure \ref{fig:Fig310}.
This deformation is depicted in the first equality in Figure \ref{fig:graphical calculation 2}.
It follows from Lemma \ref{lem:Fig310} that the sum on the left hand side of (\ref{equ:F of omega}) is zero unless $\zeta=\eta$.
If $\zeta=\eta$, then the calculation in Figure \ref{fig:graphical calculation 2} shows that each annuli part gives rise to a factor $\D^2$ and the ribbon becomes the $2n$ vertical bands.

\begin{figure}[h]
\center
\includegraphics[width=7in]{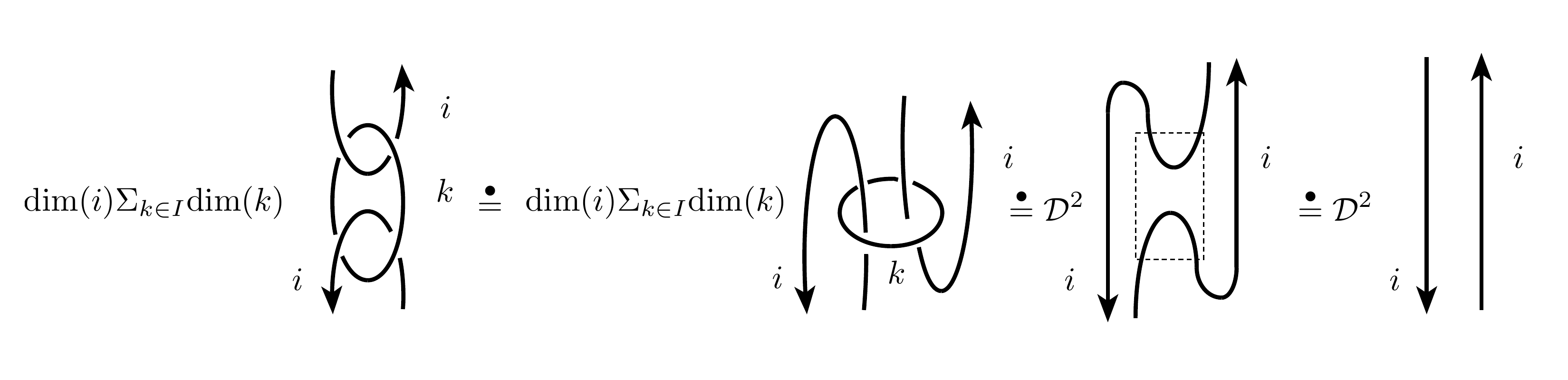}
\caption{}
\label{fig:graphical calculation 2}
\end{figure}
\end{proof}

To wrap up this section, we state here the main theorem (Theorem \ref{thm:main theorem}) again and complete its proof.
\begin{thm}
The assignment $\X$ is a projective pseudo 2-functor from the 2-category $\Co$ of decorated cobordisms with corners to the Kapranov-Voevodsky 2-vector spaces $\KV$.
\end{thm}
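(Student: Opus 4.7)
The plan is to unpack the definition of a projective pseudo 2-functor (see the appendix) and verify each datum and axiom in turn, assembling the propositions already proved. The action on 0-cells is fixed in Section 5.1; on each hom-category $\Co(\nstand{m}, \nstand{n})$ the assignment $\X$ is a projective functor to $\KV(\bfk^m, \bfk^n)$ by Proposition \ref{prop:vertical projective functor}, and the required 2-cocycle identity for the scalar anomaly $k(M_1, M_2)$ is exactly Lemma \ref{lem:vertical anomaly associativity}. The compositor
\[
u_{t,s} : \X(\Sigma_t) \circ \X(\Sigma_s) \xrightarrow{\sim} \X(\Sigma_{t \circ s})
\]
is supplied by Proposition \ref{prop:2-functor on 1-morphisms}, and its naturality in both 2-morphism slots is exactly Proposition \ref{prop:2horizontal}. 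The unitors are trivial: since the formal identity $\id_n$ is sent to the identity 2-matrix and $\id_{\id_n}$ is sent to the identity 2-homomorphism, $u_{t, \id_n}$ and $u_{\id_m, t}$ reduce to identities, so the two triangle axioms are satisfied on the nose.

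The remaining content is the pentagon axiom for the compositor. Given composable 1-morphisms $\Sigma_{t_1} : \nstand{k} \to \nstand{l}$, $\Sigma_{t_2} : \nstand{l} \to \nstand{m}$, $\Sigma_{t_3} : \nstand{m} \to \nstand{n}$, one has to show that the two composites
\begin{align*}
\X(\Sigma_{t_1}) \circ \X(\Sigma_{t_2}) \circ \X(\Sigma_{t_3})
&\xrightarrow{u_{t_1,t_2} \circ \id} \X(\Sigma_{t_1 \circ t_2}) \circ \X(\Sigma_{t_3}) \xrightarrow{u_{t_1 \circ t_2, t_3}} \X(\Sigma_{t_1 \circ t_2 \circ t_3}), \\
\X(\Sigma_{t_1}) \circ \X(\Sigma_{t_2}) \circ \X(\Sigma_{t_3})
&\xrightarrow{\id \circ u_{t_2, t_3}} \X(\Sigma_{t_1}) \circ \X(\Sigma_{t_2 \circ t_3}) \xrightarrow{u_{t_1, t_2 \circ t_3}} \X(\Sigma_{t_1 \circ t_2 \circ t_3})
\end{align*}
agree. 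I would trace through the proof of Proposition \ref{prop:2-functor on 1-morphisms}: each $u$ is assembled component-wise from copies of the splitting isomorphism of Lemma \ref{lem:sum over simple 2} applied to sum over the index sets $I^{l-1}$ (for the first composite) and $I^{m-1}$ (for the second). Both composites then yield the canonical identification
\[
\bigoplus_{(h, i^{(1)}, i^{(2)}, j)} \bigoplus_{\zeta_1, \zeta_2, \zeta_3} \Hom\bigl(\1, \Phi(t_1; \zeta_1; h, i^{(1)}) \otimes \Phi(t_2; \zeta_2; i^{(1)}, i^{(2)}) \otimes \Phi(t_3; \zeta_3; i^{(2)}, j)\bigr),
\]
regrouped as $\X(\Sigma_{t_1 \circ t_2 \circ t_3})_{h,j}$. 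Because the two orders of summing out $I^{l-1}$ and $I^{m-1}$ act on disjoint pairs of factors in the tensor product, the associated instances of the map $u_i$ of Lemma \ref{lem:sum over simple 2} act in commuting slots, so the two composites coincide.

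The main obstacle is making this pentagon check clean: one has to be careful about the ordering conventions used in (\ref{equ: Phi for e-type}), about how the canonical isomorphism $u$ from Proposition \ref{prop:2-functor on 1-morphisms} is actually built from repeated application of Lemma \ref{lem:sum over simple 2} (one factor for each middle index $i_r$), and about the fact that $u_i$ commutes only with tensor factors not involving $V_i$. The cleanest way is a graphical argument: represent each $u_i$ by a cap-like band inserted between $R_{t_1}$, $R_{t_2}$, $R_{t_3}$ as in Figure \ref{fig:the map u_i}, and observe that sliding the two caps past each other along independent strands is the pentagon. Once this is in hand, the coherence axioms of a pseudo 2-functor are all verified, and combining with Propositions \ref{prop:vertical projective functor}, \ref{prop:2-functor on 1-morphisms}, and \ref{prop:2horizontal} yields the theorem.
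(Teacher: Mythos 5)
Your proposal is correct and follows essentially the same route as the paper: assemble the axioms of a projective pseudo 2-functor from Propositions \ref{prop:vertical projective functor}, \ref{prop:2-functor on 1-morphisms}, and \ref{prop:2horizontal}, dispatch the identity axioms via the formal units, and reduce (M.1) to the coherence of the isomorphism $u$ from Lemma \ref{lem:sum over simple 2}. Your elaboration of why the pentagon holds --- the two compositors act on disjoint tensor slots, so the corresponding instances of $u_i$ commute --- usefully unpacks a point the paper dispatches in a single sentence.
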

\begin{proof}
We check the conditions (1)$\sim$(4) and (M.1), (M.2) of a projective pseudo 2-functor in Section \ref{sec:2-functor}.
(A projectivity is defined in Definition \ref{def:projective functor}.)
In the current case $(F, \phi)$ in the notation of Section \ref{sec:2-functor} is $(\X, u)$, where $u$ is the map used in the proof of Proposition \ref{prop:2-functor on 1-morphisms}.
The condition (1) is just the definition of $\X$.
The condition (2) follows from Proposition \ref{prop:vertical projective functor}.
The condition (3) as well as (M.2) is satisfied since we just use formal identities.
 The condition (4) follows from Proposition \ref{prop:2-functor on 1-morphisms} and \ref{prop:2horizontal}.
 Finally the condition (M.1) follows since the isomorphism $u$ in Lemma \ref{lem:sum over simple 2} satisfies the analogue diagram.

\end{proof}

\subsection{The extended TQFT $\X
$}\label{sec:the extended tqft}
\begin{Definition}
An \textit{extended TQFT} is a projective pseudo 2-functor from $\Co$ to $\KV$.
 An extended TQFT \textit{extends} the Reshetikhin-Turaev TQFT if when it is restricted to the category $\Co(\lempty, \rempty)$ of cobordisms without corners, it is the Reshetikhin Turaev TQFT.
\end{Definition}

Our candidate for an extended TQFT that extends the Reshetikhin-Turaev TQFT is the projective pseudo 2-functor $\X$. 
By definition, the 2-functor $\X$ is an extended TQFT.

\begin{prop}
The extended TQFT $\X$ extends the Reshetikhin-Turaev TQFT.
\end{prop}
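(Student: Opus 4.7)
The plan is to verify that the three layers of the 2-functor $\X$ reduce, when restricted to $\Co(\lempty,\rempty)$, exactly to the data of the (non-extended) Reshetikhin--Turaev TQFT as recalled in Section \ref{sec:Review and Modification of the Reshetikhin-Turaev TQFT}. Since every relevant object, 1-morphism, and 2-morphism in $\Co(\lempty,\rempty)$ involves the standard circles $\nstand{0}$ (which are empty), the constructions defining $\X$ degenerate in a controlled way, and the verification reduces to unpacking the definitions.

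First I would handle the object level. The only objects are $\lempty$ and $\rempty$, both mapped by $\X$ to $\{1\}$; this matches the RT assignment of the ground ring $K=\Hom(\1,\1)$ to the empty 1-manifold, since the hom-category $\KV(\{1\},\{1\})$ is canonically the category of projective $K$-modules and $K$-homomorphisms. Next, on 1-morphisms: a 1-morphism in $\Co(\lempty,\rempty)$ is a standard surface $\Sigma_t$ for a type $t=(0,0;a_1,\dots,a_p)$. Because $L(t)=R(t)=0$, the index sets $I^{L(t)}$ and $I^{R(t)}$ are singletons, so $\X(\Sigma_t)$ is a $1\times 1$ 2-matrix. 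Its unique entry is
\[
\X(\Sigma_t)_{**}=\bigoplus_{\zeta\in I^t}\Hom\bigl(\1,\Phi(t;\zeta;*,*)\bigr),
\]
and from the definition (\ref{equ: Phi for e-type}) one has $\Phi(t;\zeta;*,*)=\Phi(t;\zeta)$, since the tensor factors indexed by $i$ and $j$ are empty. Comparing with (\ref{equ:T(S)}) gives $\X(\Sigma_t)_{**}=\T(\Sigma_t)$ on the nose.

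For the 2-morphism level, let $[(M,\phi)]:\Sigma_t\Rightarrow\Sigma_s:\lempty\to\rempty$. Here $D_m=D_n=D_0$ is the empty manifold, so the cylinder filling yields
\[
\cFill(M)=M\cup_\phi(D_0\sqcup D_0)=M,
\]
and the colorings by $i\in I^0$ and $j\in I^0$ are vacuous; consequently the cobordism $\cFill(M)_{ij}$ is literally $M$ with its original colored ribbon graph, which is the RT input. Hence $\X([M])_{**}=\tau(\cFill(M)_{**})=\tau(M)$, and the $(*,*)$-block decomposition $\tau_\zeta^\eta$ in (\ref{equ:tau zeta eta extended}) collapses to the original RT formula (\ref{equ:tau zeta eta}), since in this case $m=\mu$ in the notation of Section \ref{subsec:explicit formula for tau(M)}, the extra left/right vertical bands disappear, and the special ribbon graph $\Omega_{(M,*,*)}$ is just the RT special ribbon graph $\Omega_M$.

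The only mild obstacle is bookkeeping: making sure that the canonical identifications $\Phi(t;\zeta;*,*)=\Phi(t;\zeta)$, $\cFill(M)=M$, and $\Omega_{(M,*,*)}=\Omega_M$ are compatible with the vertical and horizontal structure maps. But these follow directly, the first from the definition of $\Phi(t;\zeta;i,j)$, the second from the convention that an empty disjoint union is the identity, and the third from the construction of $\cFill(M)_{ij}$. No deeper argument is required, because Propositions \ref{prop:vertical projective functor}, \ref{prop:2-functor on 1-morphisms}, and \ref{prop:2horizontal} already guarantee the 2-categorical coherence on the full category $\Co$; restricting to $\Co(\lempty,\rempty)$ only specializes the data, and the specialization reproduces the pair $(\tau,\T)$ of the Reshetikhin--Turaev TQFT.
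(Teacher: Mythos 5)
Your proposal is correct and follows essentially the same route as the paper's own proof: observe that in $\Co(\lempty,\rempty)$ the left and right boundary data are empty, so the $1\times 1$ 2-matrix $\X(\Sigma_t)$ collapses to $\T(\Sigma_t)$, the cylinder filling $\cFill(M)$ is just $M$, and the explicit formula (\ref{equ:tau zeta eta extended}) reduces to (\ref{equ:tau zeta eta}). Your writeup is somewhat more explicit about the identifications $\Phi(t;\zeta;*,*)=\Phi(t;\zeta)$ and $\Omega_{(M,*,*)}=\Omega_M$, but the underlying argument is the same.
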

\begin{proof}
Suppose $(M, \bottomb M, \topb M)$ is a cobordism without corners.
Then $M$ is represented by a special ribbon graph with a bottom type
$t^-=(0,0; a_1, \dots, a_p)$ and a top type $t^+=(0,0; b_1, \dots, b_q)$.
Since there are no left and right circles on boundary surfaces of $M$, 
$\X(\bob M)$ is a $(1 \times 1)$ 2-matrix and we can canonically identify the 2-matrix with its only entry $\bigoplus_{i \in I^{t^-}}\Hom(\1, \Phi(t^-; i))$.
This module is what the RT TQFT assigns to $\bob M$.
Similarly for $\tob M$.
Then we can identify $\X(M)$ as a homomorphism from the module $\X(\bob M)$ to the module $\X(\tob M)$, which is the same as the RT  TQFT by definition.
\end{proof}

\section{Comments}\label{sec:comments}
\subsection{A 2-category of Special Ribbon Graphs}
As we have seen, the construction of the projective pseudo 2-functor $\X$ from $\Co$ to $\KV$ extensively depends on the technique of representing a cobordism by a special ribbon graphs.
On the level of objects, the 2-functor $X$ extracts just the number of components for each  1-manifold.
On the 1-morphisms, the 2-functor $X$ reads the type of each surface and outputs the projective module constructed only from the information of the type.
We can hence consider the 2-category $\srg$ whose objects are integers and whose 1-morphisms are decorated types and whose 2-morphisms are special ribbon graphs.
we could have defined $\X$ alternatively by the composition of a 2-functors from $\Co$ to $\srg$ and a 2-functor from $\srg$ and $\KV$.
However, to make it meaningful we need to define composition in $\srg$ independently of $\Co$.
This would digress from the main stream of our argument and thus we did not choose to do so.
\begin{center}
    \begin{tabular}{ | l | l | l | }
    \hline
    & $\Co$ & $\srg$ \\ \hline 
            & Geometric realization & Combinatorial data   \\ \hline
   Objects  & Standar circles & Integers
    \\ \hline
   1-morphisms & Standard surfaces & Decorated types   \\ \hline
    2-morphisms & Classes of decorated cobordisms with corners & Special ribbon graphs   \\
    \hline
    \end{tabular}
\end{center}

\subsection{A connection to other work}
We chose that our extended TQFT takes values in $\KV$.
There are 2-functors 2-$\Vect \to \Bimod \to \Cat$.
Here $\Bimod$ is a 2-category whose objects are $K$-algebra, where $K$ is the ground ring, and whose 1-morphisms are  bimodules and 2-morphisms are equivalence classes of $K$-homomorphisms.
The 2-category $\Cat$ consists of categories for objects, functors for 1-morphisms, and natural transformations for 2-morphisms.
Thus our extended TQFT can also take values in $\Bimod$ or $\Cat$ and this make a connection with the work of \cite{DSS}.

The 2-functors can be constructed as follows.
First let us define the 2-functor 2-$\Vect \to \Bimod$.
On object level, to each object $n \in \Z$ of 2-$\Vect$ we assign $K^n$.
On 1-morphism level, to each $m\times n$ 2-matrix $(V_{ij})_{ij}$ we assign a $(K^m, K^n)$-bimodule $\bigoplus_{i,j}V_{ij}$, where the bimodule structure is induced by the multiplication of $1 \times m$ matrix from the left of $(V_{ij})_{ij}$ and the multiplication of $n \times 1$ matrix from the right of $(V_{ij})_{ij}$. On 2-morphism level, if $(T_{ij})$ is a 2-morphism from $(V_{ij})$ to $(W_{ij})$, we assign $\bigoplus_{i,j} T_{i,j}: \bigoplus_{i,j}V_{ij} \to \bigoplus_{i,j}W_{ij}$.
These assignments are easily seen to be a 2-functor.

The 2-functor from $\Bimod$ to $\Cat$ assigns on object level, to each $K$-algebra $A$, the category of right $A$-modules whose objects are right $A$-modules and morphisms are homomorphisms. On 1-morphism level, the assignment is induced by tensoring  a bimodule from the right. On 2-morphism level, natural transformations are induced by  homomorphisms of bimodules.
Composing this 2-functor with the extended TQFT $\X: \Co \to 2\mbox{-}\Vect$, we have a 2-functor from the 2-category of cobordisms with corners to the 2-category $\Cat$.

\section{Appendix: Bicategories}\label{sec:appendix:bicategory}
Here we review the definition of bicategories.
The following definitions of a bicategory and a pseudo 2-functor are excerpted from the paper \cite{MR0220789}.

\subsection{Bicategories}
A \textit{bicategory} $\underbar{S}$ is determined by the following data:
\begin{enumerate}
\item A set $\underbar{S}_0 =\Ob(S)$ called set of objects of $\underbar{S}$.
\item For each pair $(A, B)$ of objects, a category $\underbar{S}(A, B)$.

An object $S$ of $\underbar{S}(A,B)$ is called a \textit{morphism} of $S$, and written $A \xrightarrow{S} B$; the composition sign $\circ$ of maps in $\underbar{S}(A,B)$ will usually be omitted.
A map $s$ from $S$ to $S'$ will be called a \textit{2-morphism} and written $s: S \Rightarrow S'$, or better 
will be represented by:
\begin{center}
\begin{tikzpicture}
\node (A) at (-1,0) {$A$};
\node (B) at (1,0) {$B$};
\node at (0,0) {\rotatebox{270}{$\Rightarrow$}};
\path[->,font=\scriptsize,>=angle 90] node[right]{$s$}
 (A) edge [bend left] node[above] {$S$} (B)
     edge [bend right] node[below] {$S'$} (B);
\end{tikzpicture}
\end{center}

\item For each triple $(A, B, C)$ of objects of $\underbar{S}$, a \textit{composition functor}:
\[c(A, B,C): \underbar{S}(A,B) \times \underbar{S}(B,C) \to \underbar{S}(A,C).\]
We write $S\circ T$ and $s\circ t$ instead of $c(A,B,C)(S,T)$ and $c(A,B,C)(s,t)$ for $(S,T)$ and $(s,t)$ objects and maps of $\underbar{S}(A,B) \times \underbar{S}(B,C)$, and abbreviate $\id_{S} \circ t$ and $s\circ \id_{T}$ into $S\circ t$ and $s\circ T$.
This composition corresponds to the pasting:
\item For each object $A$ of $\underbar{S}$ an object $I_A$ of $\underbar{S}(A,A)$ called an \textit{identity morphism} of $A$.The identity map of $I_A$ is $\underbar{S}(A,A)$ is denoted $i_A:I_A \Rightarrow I_A$ and called an \textit{identity 2-morphism} of $A$.

\item For each quadruple $(A,B,C,D)$ of objects of $\underbar{S}$, a natural isomorphism $a(A, B, C, D)$, called an \textit{associativity isomorphism}, between the two composite functors bounding the diagram:
\begin{center}
\begin{tikzcd}[column sep=3cm]
\underbar{S}(A, B) \times \underbar{S}(B, C) \times \underbar{S}(C, D) \arrow{r}{\id \times c(B, C, D)} \arrow{d}[swap]{c(A,B,C) \times \id} 
               &\underbar{S}(A,B) \times \underbar{S}(B,D) \arrow{d}{c(A,B,D)}\\
\underbar{S}(A,C) \times \underbar{S}(C,D) \arrow[Rightarrow]{ru}{a(A, B, C,D)} \arrow{r}{c(A,C,D)} & \underbar{S}(A,D) 
\end{tikzcd}
\end{center}
Explicitly:
\[a(A,B,C,D): c(A,C,D) \circ ( c(A, B, C)\times \id) \to c(A, B,D) \circ (\it \times c(B,C,D)).\]
If $(S,T,U)$ is an object of $\underbar{S}(A, B) \times \underbar{S}(B, C) \times \underbar{S}(C, D)$ the isomorphism
\[a(A, B,C,D)(S,T,U): (S \circ T) \circ U \xrightarrow{\sim} S \circ (T \circ U)\]
in $\underbar{S}(A, D)$ is called the \textit{component} of $a(A,B,C,D)$ at $(S,T,U)$ and is abbreviated into $a(S,T,U)$ or even $a$, except when confusions are possible.

\item For eahc pair $(A,B)$ of objects of $\underbar{S}$, two natural isomorphisms $l(A,B)$ and $r(A,B)$, called  \textit{left} and \textit{right} identities, between the functors bounding the diagrams:

\begin{center}
\begin{tikzcd}[column sep=3cm]
1\times \underbar{S}(A, B) \arrow{r}{I_A \times \id} \arrow{d}[swap]{\mbox{canonical}} 
               &\underbar{S}(A,A) \times \underbar{S}(A,B) \arrow{d}{c(A,A,B)}\\
\underbar{S}(A,B)  \arrow[Rightarrow]{ru}{l(A,B)} \arrow{r}{=} & \underbar{S}(A,B) 
\end{tikzcd}
\end{center}

\begin{center}
\begin{tikzcd}[column sep=3cm]
 \underbar{S}(A, B)\times 1  \arrow{r}{\id \times I_B} \arrow{d}[swap]{\mbox{canonical}} 
               &\underbar{S}(A,B) \times \underbar{S}(B,B) \arrow{d}{c(A,B,B)}\\
\underbar{S}(A,B)  \arrow[Rightarrow]{ru}{l(A,B)} \arrow{r}{=} & \underbar{S}(A,B) 
\end{tikzcd}
\end{center}

If $S$ is an object of $\underbar{S}(A, B)$, the isomorphism, component at $S$ of $l(A,B)$,
\[l(A,B)(S): I_A \circ S \xrightarrow{\sim} S \]
is abbreviated into $l(S)$ or even $l$, and similarly we write:
\[r=r(S)=r(A,B)(S): S \circ I_B \xrightarrow{\sim} S.\]
\end{enumerate}
The families of natural isomorphisms $a(A,B,C,D)$, $l(A,B)$ and $r(A,B)$ are furthermore required to satisfy the following axioms:

\begin{enumerate}
\item[(A.C.)] Associativity coherence: If $(S, T,U,V)$ is an object of 
\[\underbar{S}(A,B)\times \underbar{S}(B,C) \times \underbar{S}(C,D)\times \underbar{S}(D,E)\]
 the following diagram commutes:
 
\begin{center} 
\begin{tikzpicture}
\matrix (m) [matrix of nodes, column sep=3em, row sep=1.5em]
{ $((S\circ T) \circ U)\circ V$ &  & $(S\circ (T\circ U)) \circ V$ \\
$(S\circ T) \circ (U \circ V)$ &    &  $S \circ ((T \circ U) \circ V)$ \\
  & $S\circ (T \circ (U \circ V))$ & \\};

\path[->, font=\scriptsize]
(m-1-1) edge node[above] {$a(S,T,U)\circ \id$} (m-1-3);

\path[->, font=\scriptsize]
(m-1-1) edge node[left] {$a(A\circ T, U, V)$} (m-2-1);

\path[->, font=\scriptsize]
(m-1-3) edge node[auto] {$a(S, T\circ U, V)$}
 (m-2-3);

\path[->, font=\scriptsize]
(m-2-1) edge node[below left] {$a(S,T, U \circ V)$} (m-3-2);

\path[->, font=\scriptsize]
(m-2-3) edge node[auto]  {$\id \circ a(T, U, V)$} (m-3-2);

\end{tikzpicture}
\end{center}

\item[(I. C.)] Identity coherence: If $(S, T)$ is an object of $\underbar{S}(A,B) \times \underbar{S}(B,C)$ the following diagram commutes:

\begin{center}
\begin{tikzpicture}
\matrix (m) [matrix of nodes, column sep=3em, row sep=1.5em]
{ $(S\circ I_B) \circ T$ &  & $S\circ (I_B \circ T)$ \\
& $S \circ T$ & \\};

\path[->, font=\scriptsize]
(m-1-1) edge node[above] {$a(S,I_B, T)$} (m-1-3);

\path[->, font=\scriptsize]
(m-1-1) edge node[below left] {$r(S) \circ \id$} (m-2-2);

\path[->, font=\scriptsize]
(m-1-3) edge node[auto] {$\id \circ l(T)$}
 (m-2-2);

\end{tikzpicture}
\end{center}

\end{enumerate}

\subsection{Pseudo 2-functors of bicategories}\label{sec:2-functor}
Let $\underbar{S}=( \underbar{S}_0, c, I, a, l, r)$ and $\bar{\underbar{S}}=( \bar{\underbar{S}_0}, \bar{c}, \bar{I}, \bar{a}, \bar{l}, \bar{r})$ be two bicategories.
A \textit{pseudo 2-functor} $\Phi=(F, \phi)$ from $\underbar{S}$ to $\bar{\underbar{S}}$ is determined by the following:
\begin{enumerate}
\item A map $F: \underbar{S}_0 \to \bar{\underbar{S}_0}, A \mapsto FA$.
\item A family of functors
\[ F(A, B): \underbar{S}(A, B) \to \bar{\underbar{S}}(FA, FB), \quad S \mapsto FS, \quad s \mapsto Fs.\]
\item For each object $A$ of $\underbar{S}$, an arrow of $S(FA, FA)$ (i.e., a 2-cell of $\underbar{S}$)
\[ \phi_A: \bar{I}_{FA} \to F(I_A).\]
\begin{center}
\begin{tikzpicture}
\node (A) at (-1,0) {$FA$};
\node (B) at (1,0) {$FA$};
\node at (0,0) {\rotatebox{270}{$\Rightarrow$}};
\path[->,font=\scriptsize,>=angle 90] node[right] {$\phi_A$}
 (A) edge [bend left] node[above] {$\bar{I}_{FA}$} (B)
     edge [bend right] node[below] {$F(I_A)$} (B);
\end{tikzpicture}
\end{center}

\item A family of natural transformations:
\[\phi(A, B, C): \bar{c}(FA, FB, FC) \circ (F(A, B) \times F(B, C)) \to F(A, C) \circ c(A, B, C). \]
\begin{center}
\begin{tikzcd}[column sep=3cm]
\underbar{S}(A, C) \arrow[leftarrow]{r}{c(A, B, C)} \arrow{d}{F(A, C)} \arrow[Leftarrow]{rd}{\phi(A, B, C)}
               &\underbar{S}(A,B) \times \underbar{S}(B,C) \arrow{d}{F(A, B) \times F(B,C)}\\
\bar{\underbar{S}}(FA,FC) \arrow[leftarrow]{r}{\bar{c}(FA,FB,FC)} & \bar{\underbar{S}}(FA,FB) \times \bar{\underbar{S}}(FB,FC)
\end{tikzcd}
\end{center}
If $(S,T)$ is an object of $\bar{\underbar{S}}(A,B) \times \bar{\underbar{S}}(B,C)$ the $(S,T)$-component of $\phi(A,B,C)$
\[FS \circ FT \xrightarrow{\phi(A,B,C)(S,T)} F(S \circ T) \]
shall usually be abbreviated into $\phi(S, T)$ or even $\phi$.
\end{enumerate}
These data are required to satisfy the following coherence axioms:

\begin{enumerate}
\item[(M. 1)] If $(S, T, U)$ is an object of $\underbar{S}(A, B) \times \underbar{S}(B,C) \times \underbar{S}(C,D)$ the following diagram, where indices $A, B, C, D$ have been omitted, is commutative:

\begin{tikzcd}[column sep=large]
(FS \circ FT) \circ FU \arrow{r}{ \phi(S, T) \circ \id} \arrow{d}{ \bar{a}(FS, FT, FU)}
               &F(S\circ T) \circ FU \arrow{r}{ \phi(S \circ T, U)} & F((S\circ T) \circ U) \arrow{d}{F(a(S,T,U))} \\
FS\circ (FT \circ FU) \arrow{r}{\id \circ \phi(T, U)} & FS \circ F(T \circ U) \arrow{r}{\phi(S, T \circ U)} & F(S \circ (T \circ U))
\end{tikzcd}

\item[(M. 2)] If $S$ is an object of $\underbar{S}(A,B)$ the following diagrams commute:

\begin{tikzcd}
FS \arrow[leftarrow]{r}{Fr} 
               & F(S \circ I_B) \\
FS\circ \bar{I}_{FB} \arrow{u}{\bar{r}} \arrow{r}{ \id \circ \phi_B} & FS \circ FI_B \arrow{u}{\phi(S, I_B)}
\end{tikzcd}
\qquad
\begin{tikzcd}
F(I_A \circ S) \arrow{r}{Fl} 
               & FS \\
FI_A \circ FS  \arrow{u}{\phi(I_A, S)}\arrow[leftarrow]{r}{\phi_A \circ \id} & \bar{I}_{FA} \circ FS \arrow{u}{\bar{l}}
\end{tikzcd}

\end{enumerate}

\subsection{Projective Functors and Projective Pseudofunctors}
Let $\catC$ and $\catD$ be categories.
We introduce the notion of a \textit{projective functor} from $\catC$ to $\catD$.
This notion deals with the anomaly of the Reshetikhin-Turaev TQFT.

\begin{Definition}\label{def:projective functor}
Assume that  the set of morphisms of $\catD$ is an $R$-module for some ring $R$.
The following assignment $F$ is called \textit{projective functor}
\begin{enumerate}
\item For each object $X\in \obj(\catC)$, an object $F(X) \in \obj(\catD)$.
\item For a morphism $f: X\to Y$ in $\catC$, a morphism $F(f): F(X) \to F(Y)$ in $\catD$ satisfying the following conditions:
\begin{enumerate}

\item(Unit) For any object $X$ in $\catC$, the identity morphism $\id_X: X \to X$ in $\catC$ is mapped to the identity morphism $\id_{F(X)}: F(X) \to F(X)$ in $\catD$.

\item(Projectivity) For  composable two morphisms $f$ and $g$ in $\catC$, there exist unique element $k(f,g)$, which is called an \textit{anomaly}, of the ring $R$ such that
\begin{equation}
F(f\circ g)=k(f,g) F(f) \circ F(g).
\end{equation}
\item If one of $f$ and $g$ above is the identity morphism, then $k(f, g)$ is the unit element of $R$.
\item (Associativity) For composable three morphisms $f, g, h$ in $\catC$, we have
\begin{equation}
k(f, g\circ h)k(g,h)=k(f \circ g, h)k(f,g)
\end{equation}

\end{enumerate}

\end{enumerate}
\end{Definition}

When every anomaly is the unit, then the notion of a projective functor is the same as the usual notion of a functor.
The notion of a natural transformation between functors can be extended to a natural transformation between projective functors if anomaly factors are the same for both functors.
We call a natural transformation between projective functors \textit{projective natural transformation}
Then, we define a  \textit{projective pseudo 2-functor} by replacing functors and natural transformations in the definition of a pseudo 2-functor with projective functors and projective natural transformations.

\bibliography{reference}{}
\bibliographystyle{alpha.bst}

\end{document}